\documentclass[final,hidelinks]{amsart}

\usepackage[title,toc,titletoc,page]{appendix}
\usepackage{pdfpages}

\usepackage{booktabs}
\newcommand{\forarray}[1]{\renewcommand{\arraystretch}{#1}}

\usepackage{grffile}
\usepackage{mathtools,trimclip}
\usepackage{blkarray}
\usepackage{amsfonts}
\usepackage{amsmath}
\usepackage{amssymb}
\usepackage{mathtools}
\usepackage{multicol}
\usepackage{latexsym}
\usepackage[all]{xy}
\usepackage{xcolor}
\usepackage{multicol}
\usepackage{bm}
\usepackage{extarrows}
\usepackage{enumerate, enumitem}
\usepackage{tikz}
\usetikzlibrary{calc}
\usepackage{hyperref}
\usepackage{soul,cancel}
\usepackage{graphicx}
\graphicspath{{./FinalPaper/}}

\usepackage[mathlines]{lineno}

\hypersetup{
  colorlinks   = true,    
  urlcolor     = blue,    
  linkcolor    = blue,    
  citecolor    = red      
}

\usepackage[left=1in,right=1in,top=1in,bottom=1in]{geometry}

\usetikzlibrary{positioning}
\usepackage{caption}
\usepackage{qtree}

\usepackage[normalem]{ulem}

\makeatletter
\newcommand{\addresseshere}{%
  \enddoc@text\let\enddoc@text\relax
}
\makeatother

\setlength{\oddsidemargin}{0in}
\setlength{\evensidemargin}{0in}
\setlength{\textwidth}{6.5in}
\setlength{\topmargin}{-0.5in}
\setlength{\textheight}{9in}

\newtheorem{theorem}{Theorem}[section]
\newtheorem{lemma}[theorem]{Lemma}
\newtheorem{proposition}[theorem]{Proposition}
\newtheorem{corollary}[theorem]{Corollary}
\newtheorem*{theorem*}{Theorem}

\theoremstyle{definition}
\newtheorem{definition}[theorem]{Definition}
\newtheorem{example}[theorem]{Example}
\newtheorem{conjecture}[theorem]{Conjecture}
\theoremstyle{remark}
\newtheorem{remark}[theorem]{Remark}
\newtheorem{notation}[theorem]{Notation}
\newtheorem{assumption}[theorem]{Assumption}

\newcommand{\R}{\mathbb{R}}

\newcommand{\msc}{\mathsf{c}}

\newcommand{\ds}{\displaystyle}

\newcommand{\In}{\textit{In}}
\newcommand{\Out}{\textit{Out}}
\newcommand{\Leak}{\textit{Leak}}

\usepackage{nicematrix}
\definecolor{TAMU}{RGB}{140,0,0}
\definecolor{myblue}{RGB}{0,0,198}
\definecolor{myred}{RGB}{182,0,0}
\newcommand{\defcolor}[1]{{\color{TAMU}#1}}
\newcommand{\jordy}[1]{\defcolor{{#1}}}


\allowdisplaybreaks


\setlength{\pdfpagewidth}{8.5in}
\setlength{\pdfpageheight}{11in}

\setlength{\parskip}{1em}

\title{Identifiability of directed-cycle and catenary \\ linear compartmental models} 

\author[Ahmed]{Saber Ahmed} 
\address{Saber~Ahmed\\ 
         Department of Mathematics and Statistics\\ 
         Hamilton College\\ 
         Clinton\\ 
         New York \ 13323\\ 
         USA} 
\email{smahmed@hamilton.edu}
\urladdr{https://sites.google.com/view/saber-ahmed}
\author[Crepeau]{Natasha Crepeau} 
\address{Natasha~Crepeau\\ 
         Department of Mathematics\\ 
         University of Washington\\ 
         Seattle\\ 
         Washington \ 98195\\ 
         USA} 
\email{ncrepeau@uw.edu}
\urladdr{https://sites.google.com/view/natasha-crepeau-math/home}
\author[Dessauer]{Paul R. Dessauer Jr.} 
\address{Paul~R. Dessauer Jr.\\ 
         Department of Mathematical Sciences\\ 
         The University of Texas at El Paso\\ 
         El Paso\\ 
         Texas \ 79968\\ 
         USA; 
         {\em Present address:}
         Department of Mathematics\\ 
         Texas A\&M University\\ 
         College Station\\ 
         Texas \ 77843\\ 
         USA
         } 
\email{pdessauer@tamu.edu}
\author[Edozie]{Alexis Edozie} 
\address{Alexis~Edozie\\ 
         Department of Mathematics and Statistics\\ 
         University of Massachusetts Amherst\\ 
         Amherst\\ 
         Massachusetts \ 01003\\ 
         USA;
         {\em Present address:}
        Department of Statistics\\ 
        University of Michigan\\
        Ann Arbor\\
        Michigan \ 48109\\
        USA
         } 
\email{aedozie@umich.edu}
\author[Garcia-Lopez]{Odalys Garcia-Lopez} 
\address{Odalys~Garcia-Lopez\\ 
         Department of Mathematics \& Statistics\\ 
         The College of New Jersey\\ 
         Ewing\\ 
         New Jersey \ 08628\\ 
         USA;
         {\em Present address:}
         Department of Mathematics\\
         Tufts University\\ 
         Medford\\ 
         Massachusetts \ 02155\\
         USA
         } 
\email{odalys.garcia\textunderscore lopez@tufts.edu}
\author[Grimsley]{Tanisha Grimsley} 
\address{Tanisha~Grimsley\\ 
         Mathematics Department\\ 
         Juniata College\\ 
         Huntingdon\\ 
         Pennsylvania\ 16652\\ 
         USA} 
\email{grimstn19@gmail.com}
\author[Lopez]{Jordy Lopez Garcia} 
\address{Jordy~Lopez Garcia\\ 
         Department of Mathematics\\ 
         Texas A\&M University\\ 
         College Station\\ 
         Texas \ 77843\\ 
         USA} 
\email{jordy.lopez@tamu.edu}
\urladdr{https://jordylopez27.github.io/}
\author[Neri]{Viridiana Neri} 
\address{Viridiana~Neri\\ 
         Department of Mathematics\\ 
         Columbia University\\ 
         New York\\ 
         New York \ 10027\\ 
         USA} 
\email{vjn2108@columbia.edu}
\author[Shiu]{Anne Shiu} 
\address{Anne~Shiu\\ 
         Department of Mathematics\\ 
         Texas A\&M University\\ 
         College Station\\ 
         Texas \ 77843\\ 
         USA} 
\email{annejls@tamu.edu}
\urladdr{https://people.tamu.edu/~annejls/}

\date{\today}

\begin{document}

\newcommand{\BibTeX}{{\scshape Bib}\TeX\xspace} %

\begin{abstract} 
A parameter of a mathematical model is structurally identifiable if it can be determined from noiseless experimental data.  Here, we examine the identifiability properties of two important classes of linear compartmental models: directed-cycle models and catenary models (models for which the underlying graph is a directed cycle or a bidirected path, respectively).  Our main result is a complete characterization of the directed-cycle models for which every parameter is (generically locally) identifiable.  Additionally, for catenary models, we give a formula for their input-output equations.  Such equations are used to analyze identifiability, so we expect our formula to support future analyses into the identifiability of catenary models.  Our proofs rely on prior results on input-output equations, and we also use techniques from linear algebra and graph theory.

\vspace{.1in}
\noindent
{\bf MSC Codes:} 
93B30,
37N25, 
92C45,
34A30, 
34A55,
05C50 

\vspace{.1in}
\noindent
 {\bf Keywords:} Linear compartmental model, structural identifiability, cycle model, catenary model
\end{abstract}
\maketitle



\section{Introduction} 
Linear compartmental models are used in a variety of biological applications \cite{BGMSS, delbary2016compartmental, eisenberg2017confidence, singularlocus, MSE}.  Each such model has an underlying directed graph that describes how a substance, such as a chemical, moves throughout a system. For instance, a model may involve a drug being administered to a patient, circulating through the body, and entering and exiting organs at a certain rate. Such rates are {parameters} of the model.

A parameter of a model is (generically locally) {\em structurally identifiable} (or {\em identifiable}, for short) if it can be determined from noiseless experimental data, and a model is itself 
(generically locally) 
{\em identifiable} if all of its parameters are identifiable.  
Our motivation is a recent characterization of  identifiability for a large class of linear compartmental models, 
in which the underlying graph is a bidirected tree.  This classification, which is due to Bortner {\em et al.}~\cite{BGMSS}, is as follows: {\em A bidirected-tree model with one input and one output is generically locally identifiable if and only if it has at most one leak, and the distance from the input to the output is at most one} (see Proposition~\ref{prop:classification-identifiable-models}).

Several questions arise:
\begin{enumerate}[label=({\bf Q\arabic*})]
    \item In {\em unidentifiable} bidirected-tree models, which individual parameters (if any) are identifiable?
    \item Is there a characterization of identifiability -- analogous to that of Bortner {\em et al.} -- for models that are not bidirected-tree models?
    \item How does Bortner {\em et al.}'s classification generalize to allow for models with more than one input and/or more than one output?
\end{enumerate}

Our work provides partial answers to these questions.  For~({\bf Q1}), we consider an important class of bidirected-tree models, namely, {\em catenary models}; these are the models in which the underlying graph is a bidirected path. 
An example is shown in Figure~\ref{fig: directedcyclex}.  
See~\cite{delbary2016compartmental,scussolini2017physiology} for instances of catenary models  appearing in applications.

\begin{figure}[ht]
\centering
\begin{tikzpicture}[scale=0.9]
        \tikzset{node basic/.style={draw, ultra thick, text=black, minimum size=2em}}
        \tikzset{node circle/.style={node basic, circle}}
        \tikzset{line basic/.style={very thick, ->}}
        
        \node[node circle, minimum size=1.1cm] (1) at (0, 0) {\(1 \)};
        \node[node circle, minimum size = 1.1cm] (2) at (1.5, 1.5) {\(2\)};
        \node[node circle, minimum size=1.1cm] (3) at (4.5, 1.5) {\(3\)};
        \node[node circle] (n-1) at (4.5,-1.5) {\small \(n-1\)};
        \node[node circle, minimum size=1.1cm] (n) at (1.5, -1.5) {\(n\)};
        
        \draw[line basic] (n.north west)--(1.south east) node[xshift=.1cm, yshift=-.55cm] {\(k_{1n}\)};
        \draw[line basic] (1.north east)--(2.south west) node[xshift= -.55cm, yshift=-0.1cm] {\(k_{21}\)};
        \draw[line basic] (2.east)--(3.west) node[midway, above] {\(k_{32}\)};
        \draw[line basic] (n-1.west)--(n.east) node[midway, below] {\(k_{n,n-1}\)};
        \draw[line basic] (-1.5,1.5)--(1.north west) node[midway, left] {\(\text{input}\)};
        \draw (-1.5,-1.5)--(1.south west) node[midway, left] {\(\text{output}\)};
        \node at (-1.58,-1.58) [circle, draw, fill=none,  inner sep=2pt]{};
        \draw[line basic] (3.east)--(7,1.5) node[midway, above] {\(k_{03}\) (leak)};
        \draw[line basic] (5.5,-.5)--(n-1.north east) node[yshift=.2cm, xshift=1.1cm] {\(k_{n-1,n-2}\)};
        \draw[line basic] (3.south east)--(5.5, .5) node[xshift = .1cm, yshift = .5cm] {\(k_{43}\)};
        \node at (5.5, .25)  [circle, fill, inner sep=1pt]{};
        \node at (5.5, 0)  [circle, fill, inner sep=1pt]{};
        \node at (5.5, -.25)  [circle, fill, inner sep=1pt]{};
        %
        \node[node circle] (1cat) at (-11, 0) {\(1\)};
        \node[node circle] (2cat) at (-9, 0) {\(2\)};
        \node[node circle] (3cat) at (-7, 0) {\(3\)};
        \node[node circle] (ncat) at (-5, 0) {\(n\)};
        \draw[line basic] (1cat.north east)--(2cat.north west) node[midway, above] {\(k_{21}\)};
        \draw[line basic] (2cat.south west)--(1cat.south east) node[midway, below] {\(k_{12}\)};
        \draw[line basic] (2cat.north east)--(3cat.north west) node[midway, above] {\(k_{32}\)};
        \draw[line basic] (3cat.south west)--(2cat.south east) node[midway, below] {\(k_{23}\)};
        \node at (-6.25, 0)  [circle, fill, inner sep=1pt]{};
        \node at (-6, 0)  [circle, fill, inner sep=1pt]{};
       \node at (-5.75, 0)  [circle, fill, inner sep=1pt]{};
        \draw[line basic] (-10.5,1.5)--(2cat.north) node[midway, right] {\(\text{input}\)};
        \draw (-6.5,-1.5)--(ncat.south west) node[midway, right] {\(\text{output}\)};;
        \node at (-6.58,-1.58) [circle, draw, fill=none,  inner sep=2pt]{};
\end{tikzpicture}
\caption{A catenary model (left), and a directed-cycle model (right).
} 

\label{fig: directedcyclex}
\end{figure}

We give a formula for 
certain equations arising from catenary models (Theorem~\ref{thm:catenary-coefficient-map}).  Specifically, these are the {\em input-output equations}, which are used to analyze identifiability.  
It is our hope that, in the future, our formula will lead to a complete answer to question~({\bf Q1}) for catenary models.

For question~({\bf Q2}), we focus on a class of non-bidirected-tree models: {\em directed-cycle models}.  One such model appears in Figure~\ref{fig: directedcyclex}.  
In contrast to the classification of 
Bortner {\em et al.} (Proposition~\ref{prop:classification-identifiable-models}), 
directed-cycle models with one input and one output can be identifiable and yet (i) have more than one leak and/or (ii) be such that the distance from the input to output is arbitrarily large.  

Our main result 
completely characterizes (generic, local) identifiability for all directed-cycle models, even those with more than one input or output:
\begin{theorem*}[Theorem~\ref{thm:main-cycle}] 
A directed-cycle model is generically locally
identifiable if and only if it is ``leak-interlacing''.
\end{theorem*}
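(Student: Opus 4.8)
The plan is to reduce identifiability to a rank computation for the coefficient map of the input-output equations, and then to translate that rank condition into the combinatorial leak-interlacing condition. Write the compartmental matrix of a directed-$n$-cycle model as $A = -\,\mathrm{diag}(a_1,\dots,a_n) + N$, where $a_i = k_{0i} + k_{i+1,i}$ and $N$ is the cyclic weighted shift with entries $b_i := k_{i+1,i}$ (indices mod $n$, so $b_n = k_{1n}$). The first step is to record the clean determinantal identity
\[
\det(sI - A) \;=\; \prod_{i=1}^n (s+a_i) \;-\; \prod_{i=1}^n b_i ,
\]
and the fact that, for an input at compartment $p$ and an output at compartment $q$, the adjugate entry governing the corresponding transfer function factors as a product of the $b_i$ over the edges of the forward path from $p$ to $q$, times a product of $(s+a_j)$ over the vertices off that path. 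These formulas give the entire coefficient map explicitly, even in the presence of several inputs and outputs (which share the single denominator above): the denominator contributes $e_1(a),\dots,e_{n-1}(a)$ together with the mixed constant term $\prod_i a_i - \prod_i b_i$, while each input-output pair contributes its edge-product $\prod b_i$ and the elementary symmetric functions of the complementary diagonals $a_j$.

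Invoking the standard criterion that a model is generically locally identifiable exactly when this coefficient map is generically finite-to-one (equivalently, when its Jacobian attains full column rank equal to the number of parameters), the second step is to determine precisely which parameters the map pins down. The key observations are: (i) from each numerator one reads off the elementary symmetric functions of the complementary-arc diagonals $a_j$; (ii) combining these with $e_1(a),\dots,e_{n-1}(a)$ and solving the resulting triangular linear system recovers the full multiset $\{a_1,\dots,a_n\}$ generically, the only genuinely degenerate configuration being when a single arc already exhausts the cycle, which must be treated separately; and (iii) once all $a_i$ are known, every non-leak edge is determined by $b_i=a_i$, so the only remaining unknowns are the edges $b_i$ emanating from leak vertices, constrained solely by the recoverable edge-products over arcs. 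Note that permuting equal-valued $a_i$'s contributes only a finite, hence harmless, ambiguity for \emph{local} identifiability.

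The third step is the combinatorial heart of the argument. The marked vertices (inputs and outputs) cut the cycle into arcs, and by taking ratios of the path edge-products over varying $p,q$ — together with the total product $\prod_i b_i$ extracted from the mixed constant term — one shows that the recoverable edge-products are exactly the products of $b_i$ over the arcs delimited by consecutive marked vertices. A product of two or more unknown leak edges lying in a common arc leaves a positive-dimensional fiber, whereas an arc containing at most one leak edge determines that edge. Thus the fiber is finite if and only if no arc between consecutive inputs/outputs contains two leaks, which is precisely the leak-interlacing condition; assembling the single-input single-output computation with the multiple-input/output bookkeeping then yields Theorem~\ref{thm:main-cycle}.

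I expect the main obstacle to be this third step: rigorously identifying which arc-products are recoverable once several inputs and outputs are present, and proving that ``at most one leak per marked arc'' is equivalent to the vanishing of the continuous part of the fiber. This requires careful genericity arguments to guarantee that the symmetric-function inversions and the product-ratio cancellations are valid (i.e.\ that the relevant denominators are generically nonzero), a uniform treatment of the degenerate cases where an input coincides with an output or where a single arc covers the whole cycle, and verifying that the global relation imposed by the total edge-product $\prod_i b_i$ adds no constraint beyond the per-arc ones. The preceding determinantal formulas and the prior input-output-equation results quoted above reduce everything else to routine linear algebra.
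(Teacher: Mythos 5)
Your overall strategy (recover the diagonal multiset $\{a_1,\dots,a_n\}$ from the symmetric-function coefficients, deduce the non-leak edges, and reduce the remaining freedom to per-arc edge products) is a genuinely different, recovery-based route from the paper's Jacobian arguments, and parts of it are sound: your step 1 is exactly the coefficient formula of Lemma~\ref{lem:coefficient-map-cycle}, and your scaling mechanism for two leak edges in a common arc ($b_\ell \mapsto t b_\ell$, $b_{\ell'} \mapsto b_{\ell'}/t$, compensating with $k_{0\ell},k_{0\ell'}$ so that all diagonals, all path products, and $\prod_i b_i$ are preserved) is precisely the integrated form of the column dependence the paper exhibits in Lemma~\ref{lem:unIdentifiable-before-output}. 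However, there is a genuine gap: your concluding criterion, that the fiber is finite if and only if no arc between consecutive inputs/outputs contains two leaks, is \emph{not} equivalent to the paper's leak-interlacing condition, and it is false as a characterization of identifiability. The counterexamples are exactly the exceptional models of Definition~\ref{def:exceptional-model}: one input at compartment $i$, one output at $i-1$, and two leaks, one of which sits at the output. There the marked vertices cut the cycle into two edge-arcs (the $(n-1)$-edge arc from $i$ to $i-1$, and the single edge $i-1 \to i$), each containing exactly one leak edge, so your criterion declares the model identifiable --- yet it is unidentifiable (Proposition~\ref{prop:cycle-1-in-1-out}(2); indeed it has $n+2$ parameters but only $n+1$ coefficients). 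This is precisely why the paper's Definition~\ref{def:interlacing} carves exceptional models out of ``leak-interlacing.''

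The root cause sits in your step (ii). Recovering the multiset $\{a_1,\dots,a_n\}$ requires an input-output pair $(i,j)$ with $j \neq i-1 \pmod n$, so that the complementary-arc symmetric functions exist and your triangular system closes; when the unique input-to-output path exhausts the cycle, you only know $e_1(a),\dots,e_{n-1}(a)$ together with the mixed term $e_n(a)-\prod_i b_i$, and the recovery --- hence everything downstream of it, including the extraction of $\prod_i b_i$ and of the arc products --- collapses. You flag this case as ``degenerate'' and requiring ``uniform treatment,'' but it is not a technical wrinkle that careful genericity arguments can smooth over: in this case the answer itself flips, so your proof needs a separate argument reaching the \emph{opposite} conclusion (unidentifiability), and your stated equivalence must be amended by the exceptional-model exclusion. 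As written, neither direction of your argument covers these models: the identifiability direction cannot get started (no multiset recovery), and the unidentifiability direction never triggers (no arc contains two leaks). The paper handles them by quoting the result of Gerberding--Obatake--Shiu and by building the exclusion into the definition; your proposal needs the analogous repair before the rest of the (otherwise plausible) plan can yield Theorem~\ref{thm:main-cycle}.
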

 
\noindent 
Informally, a model is leak-interlacing if its leaks ``interlace'' with the inputs and outputs, in the sense that between any two leaks there is at least one input or output (see Definition~\ref{def:interlacing}). 

Theorem~\ref{thm:main-cycle} partially answers question~({\bf Q2}) and in fact also addresses question~({\bf Q3}).  Indeed, Theorem~\ref{thm:main-cycle} concerns directed-cycle models with any number of inputs and outputs (and leaks). 
In fact, to our knowledge, {\em this is the first result on the identifiability of a family of models that allows for any number of inputs and outputs.}  

Some cases of Theorem~\ref{thm:main-cycle} were previously known~\cite{GOS,singularlocus}. These cases pertain to directed-cycle models with only one input and one output. Indeed, Theorem~\ref{thm:main-cycle} extends results of 
Gerberding, Obatake, and Shiu~\cite{GOS}, as summarized in Table~\ref{tab:summary-cycle}. 

\begin{table*}
\centering
\forarray{1.6}
\begin{tabular}{@{}llll@{}}
\toprule
Output ($p$)                          & Leaks                                                                                                 & Result                                       & Reference                                                                               \\
\hline

(any)                                    & $\lvert Leak \rvert \leq 1$                                                                                                          & identifiable                                 & \cite[Theorem 3.4]{GOS}                                            \\
    $p=1$ or $p=n$  
& (see result)                                                                                                             & 

identifiable $\Leftrightarrow$  $\lvert Leak \rvert \leq 1$         

& 
    \begin{minipage}{1.3in}
    \cite[Theorem 3.7 and Corollary~3.11]{GOS}         
    \end{minipage}
   \\
 $2 \leq p \leq n-1$ 
& 
(see result)
& 
identifiable $\Leftrightarrow$ leak-interlacing condition
& 
Theorem~\ref{thm:main-cycle}
    \\
(any)                                    & $\lvert Leak \rvert \geq 3$                                                                                                          & unidentifiable                               & Corollary~\ref{cor:cycle-3plus-leaks}   \\                                             
\bottomrule
\end{tabular}
\caption{Classification of directed-cycle models with one input and one output.  Specifically, this table displays results on (generic, local) identifiability of 
directed-cycle models $(G, \In, \Out, \Leak)$ with $n \geq 3$ compartments, $\In=\{1\}$, and $\Out=\{p\}$.
The leak-interlacing condition is described in Definition~\ref{def:interlacing}.
\label{tab:summary-cycle}
}
\end{table*}

A final direction of our work builds on Theorem~\ref{thm:main-cycle} (but does not 
 address questions~({\bf Q1})--({\bf Q3})).   
We investigate, for directed-cycle models that are identifiable, their ``singular loci''.   The singular locus refers to the measure-zero set of parameter vectors that, informally speaking, are not identifiable (for details, see Definition~\ref{def:singular-locus})~\cite{singularlocus}. 
In other words, this set represents the parameters that can {\em not} be estimated from data.

In the case of directed-cycle models with input, output, and leak in a single compartment, the singular locus was computed by Gross {\em et al.}~\cite{GHMS}; in particular, this set is a union of hyperplanes.  Here we consider additional directed-cycle models, and although we can not compute the full set in all cases, we nevertheless prove that the singular locus contains certain hyperplanes 
(Theorems~\ref{thm:singular-locus-cycle} and~\ref{thm:1-1-p-singular-locus}).

Our results, like many of those in prior works~\cite{BGMSS,
BM-2022,
BM-indisting,
CJSSS,GOS,GHMS, singularlocus}, are proven using combinatorial tools involving graph theory, as well as algebraic methods.  The reason for this is that we take a differential-algebra approach to identifiability, and use prior results to reduce the question of identifiability (for linear compartmental models) to one of combinatorics and linear algebra.

This paper is structured as follows. 
Section \ref{sec:background} introduces linear compartmental models and recalls some useful results about such models.  In Sections \ref{sec:cycle-models}--
\ref{sec:catenary}, we prove our results for directed-cycle models and catenary models. Lastly, we discuss remaining open questions in Section~\ref{sec:Conj}. 

\section{Background}\label{sec:background}
This section introduces linear compartmental models (Section~\ref{sec:models}), their ODEs and input-output equations (Section~\ref{sec:i-o-equations}), and identifiability (Section~\ref{sec:ident}), following the notation in prior work~\cite{GOS,MSE}.  We also recall some useful results on symmetric polynomials (Section~\ref{sec:sym-polyn}).

\subsection{Linear compartmental models} \label{sec:models}
Before giving precise definitions, we begin with a motivating example: the linear compartmental model shown in Figure~\ref{fig:cycle-running-example}, which has three compartments.  The three (directed) edges between compartments form a (directed) cycle, while the arrows labeled $k_{01}$ and $k_{03}$ are called {\bf leaks}. Each leak represents a loss from a compartment (by degradation, for instance, or from an outflow).
At compartment-1, there is an inward-pointing arrow, which is the {\bf input}, representing (for instance) the intake of a drug.  
Finally, the symbol 
\begin{tikzpicture}
    \draw (0,0) circle (0.06);
    \draw (0.06,0.06) -- (.16,.19);
\end{tikzpicture}
represents the \textbf{output} of the system; 
output compartments represent, for instance, the locations at which we measure the drug concentration. 

\begin{figure}[ht]
\begin{center}
\begin{tikzpicture}[
roundnode/.style={circle, draw=black, very thick, minimum size=10mm},
arrowbasic/.style={very thick, ->},
]

\node[roundnode](middle){2};
\node[roundnode](leftcomp)  [left=of middle] {1};
\node[roundnode](rightcomp) [below=of middle] {3};

\draw[arrowbasic] (leftcomp.east) -- (middle.west) node[pos=.5, above] {\(k_{21}\)};
\draw[arrowbasic] (middle.south) -- (rightcomp.north) node[pos=.5, right] {\(k_{32}\)};

\draw[arrowbasic] (-2.05,1.35) -- node[right] {In}(leftcomp.north);
\draw(0,1.25) circle (0.1) node[right] { }; 
\draw(0,1.15) -- (middle.north);
\draw[arrowbasic] (rightcomp.east)  -- node[below]{\(k_{03}\)}(1.5, -2.03);
\draw[arrowbasic] (leftcomp.west) -- node[below] {\(k_{01}\)}(-3.5, 0);

\draw[arrowbasic] (rightcomp.north west) -- (leftcomp.south east) node[pos=.5, left] {\(k_{13}\)};
\end{tikzpicture}
\end{center}
    \caption{A linear compartmental model.}
    \label{fig:cycle-running-example}
\end{figure}

\begin{definition}
A \textbf{linear compartmental model}, denoted by $(G, \In, \Out, \Leak)$, consists of a finite, directed graph $G = (V, E)$, where each vertex (or node) $i\in V$ represents a \textbf{compartment} of the model, with three sets $\In, \Out, \Leak \subseteq V$ denoting the sets of {\bf input}, {\bf output}, and {\bf leak} compartments, respectively. 

In a linear compartmental model, a directed edge $j\to i$ represents the flow or transfer of material, with associated parameter $k_{ij}$. Each leak compartment $\ell \in \Leak$ also has an associated parameter, $k_{0\ell}$ (as shown, for instance, in Figure~\ref{fig:cycle-running-example}).
\end{definition}

\begin{assumption} \label{assumption}
All linear compartmental models in this work are assumed to have {\color{purple} at} least one input and 
at least one output (that is, $\lvert In \rvert \geq 1$ and
$\lvert Out \rvert \geq 1$).
\end{assumption}

In figures, we represent a linear compartmental model by its graph, together with input and leak arrows, and the symbol \begin{tikzpicture}
    \draw (0,0) circle (0.06);
    \draw (0.06,0.06) -- (.16,.19);
\end{tikzpicture} (for outputs).
Additionally, we label compartments by positive integers. See, for instance, Figures~\ref{fig: directedcyclex}--\ref{fig: bidirectedcatex}.

\begin{example}
For the linear compartmental model in Figure \ref{fig: directedcyclex},
we have $In = Out = \{1\}$ and $Leak = \{3\}$.  For the model in Figure \ref{fig:cycle-running-example}, $In = \{1\}$, $Out = \{2\}$, and $Leak = \{1,3\}$.
\end{example}

This work focuses on certain types of linear compartmental models, which we define next.

\begin{definition} \label{def:types-of-models} 
Consider a linear compartmental model $\mathcal M = (G, \In, \Out, \Leak)$.
\begin{enumerate}
    \item $\mathcal M$ is a \textbf{bidirected-tree model} if $G$ is a bidirected-tree graph (that is, $G$ is obtained from an undirected tree by making all edges bidirected).
    \item $\mathcal M$ is a \textbf{catenary model} if $G = (V, E)$ is a bidirected-path graph: $V = \{1,2, \ldots, n \}$ (for some positive integer $n$) and 
    $E = \{1 \leftrightarrows 2,~ 2 \leftrightarrows 3,~ \dots, (n{-}1) \leftrightarrows n \}$.
    \item $\mathcal M$ is a \textbf{directed-cycle model} (or {\bf cycle}, for short) if $G = (V, E)$ 
    is a directed cycle: $V = \{1,2, \ldots, n \}$, where $n \geq 3$, and 
        $E = \{1\to 2,~ 2\to 3,~ \dots, (n{-}1) \to n,~ n \to 1\}$.
    \item $\mathcal{M}$ is \textbf{strongly connected} if for any pair of vertices $i,j$ in $G$, there is a directed path from $i$ to $j$ and also a directed path from $j$ to $i$.
\end{enumerate}
\end{definition}

\noindent
Catenary models are bidirected-tree models, but directed-cycle models are not.  
Both catenary models (and, more generally, bidirected-tree models) and directed-cycle models are 
strongly connected.

\begin{figure}[h]
\centering
\begin{tikzpicture}
        \tikzset{node basic/.style={draw, ultra thick, text=black, minimum size=2em}}
        \tikzset{node circle/.style={node basic, circle}}
        \tikzset{line basic/.style={very thick, ->}}
        
        \node[node circle] (1) at (0, 0) {\(1\)};
        \node[node circle] (2) at (2, 0) {\(2\)};
        \node[node circle] (3) at (4, 0) {\(3\)};
        \node[node circle] (4) at (6, 0) {\(4\)};

        \draw[line basic] (1.north east)--(2.north west) node[midway, above] {\(k_{21}\)};
        \draw[line basic] (2.south west)--(1.south east) node[midway, below] {\(k_{12}\)};
        \draw[line basic] (2.north east)--(3.north west) node[midway, above] {\(k_{32}\)};
        \draw[line basic] (3.south west)--(2.south east) node[midway, below] {\(k_{23}\)};
        \draw[line basic] (3.north east)--(4.north west) node[midway, above] {\(k_{43}\)};
        \draw[line basic] (4.south west)--(3.south east) node[midway, below] {\(k_{34}\)};
        \draw[line basic] (-1,1)--(1.north west) node[midway, left] {\(\text{In}\)};
        \draw (1.2,-1)--(2.south) ;
        \draw(1.1,-1.05) circle (0.1) node[right] {};        
        \draw[line basic] (2.north)--(2,1.5) node[midway, left] {\(k_{02}\)};
        \draw[line basic] (4.north)--(6,1.5) node[midway, left] {\(k_{04}\)};
\end{tikzpicture}
\caption{A catenary model.}
\label{fig: bidirectedcatex}
\end{figure}

\begin{example}  \label{ex:cycle-and-cat}
Directed-cycle models are shown in Figures \ref{fig: directedcyclex} and ~\ref{fig:cycle-running-example}, while catenary models are depicted in Figures~\ref{fig: directedcyclex} and~\ref{fig: bidirectedcatex}.
\end{example}

\subsection{ODEs and input-output equations} \label{sec:i-o-equations}
To define the ODEs arising from a linear compartmental model, we use the following matrix.
\begin{definition}
The \textbf{compartmental matrix} of a linear compartmental model $(G, \In, \Out, \Leak)$ with $n$ compartments, where $G=(V,E)$, is the $n\times n$ matrix $A = (a_{ij})$ given by:
    \begin{equation*}
        a_{ij} ~:=~
            \begin{cases}
                \ds -k_{0i} - \sum_{p: i \to p \in E} k_{pi}  &\text{if $i=j$ and $i \in \Leak$},\\
                \ds - \sum_{p:i \to p\in E} k_{pi} &\text{if $i=j$ and $i \notin \Leak$},\\
                k_{ij} &\text{if $i\neq j$ and $j \to i$ is an edge of $G$},\\
                0 &\text{if $i\neq j$ and $j\to i$ is not an edge of $G$}.
            \end{cases}.
    \end{equation*}
\end{definition}

\begin{example}\label{ex:3-cycle-matrix} 
The model in Figure~\ref{fig:cycle-running-example} has the following compartmental matrix:
    \begin{align} \label{eq:A-matrix-for-0}
    A ~=~ \begin{pmatrix}
        -k_{01} - k_{21} & 0& k_{13} \\
         k_{21} & -k_{32} & 0 \\
         0 & k_{32} & -k_{03} -k_{13}
    \end{pmatrix}~.
        \end{align}
\end{example}

A linear compartmental model $\mathcal M = (G, \In, \Out, \Leak)$ with $n$ compartments defines a system of ODEs, as follows. Let $u(t)=(u_{1}(t),u_{2}(t),\dots,u_{n}(t))^{T}$ denote the vector of inputs (where $u_{j}(t)=0$ when $j\notin In$) at time $t$. 
 
The vector $x(t) = (x_1(t), x_2(t), \hdots, x_n(t))^{T}$ denotes the vector of concentrations at all compartments.  
Let $A$ denote the compartmental matrix of $\mathcal M$.  Then 
$\mathcal M$ defines the following system of linear ODEs (with inputs):
\begin{align} \label{eq:ode}
    x'(t) &= Ax(t) + u(t)  \\
    y_i(t) &= x_i(t)~,
    \notag
\end{align}
for all $i\in Out$.
\begin{example} [Example~\ref{ex:3-cycle-matrix}, continued] \label{ex:3-cycle-matrix-continued}
For the model in Figure~\ref{fig:cycle-running-example}, the ODEs~\eqref{eq:ode} are as follows: 
    \begin{align*} 
    x_1'(t) &~=~
        -(k_{01}+k_{21}) x_{1}(t)
        + k_{13}x_{3}{\color{blue}(t)} + u_1(t)\\
    x_2'(t)&~=~ k_{21}x_{1}(t)-k_{32}x_{2}(t)\\
    x_3'(t)&~=~ k_{32}x_{2}(t)-(k_{03}+k_{13})x_{3}(t) \\ 
    y_2(t)&~=~x_{2}(t)~.
    \end{align*}
\end{example}
An equation that holds along every solution $x(t)$ of the ODEs~\eqref{eq:ode} is an {\em input-output equation} if it involves only the parameters $k_{ij}$, input variables $u_i\vcentcolon = u_i(t)$, output variables $y_i\vcentcolon = y_i(t)$, and their derivatives.
The next result recalls a formula for input-output equations, which was proven by Meshkat, Sullivant, and Eisenberg~\cite[Theorem 2]{MSE}. 

\begin{notation} \label{notation:submatrix}
    We use $(B)^{j,i}$ to denote the submatrix obtained from a matrix $B$ by removing row $j$ and column $i$.
\end{notation}

\begin{proposition}[Input-output equations,~\cite{MSE}] \label{prop:in/out} Let $\mathcal{M} = (G, \In, \Out, \Leak)$ be a linear compartmental model with $n$ compartments and at least one input, and let $A$ be its compartmental matrix. For $i\in \Out$, the following equation is an {input-output equation} for $\mathcal{M}$:
\begin{equation} \label{eq:in-out}
    \det(\partial I-A)y_{i} ~=~ \sum_{j\in In}(-1)^{i+j}\det \left[ (\partial I-A)^{j,i} \right] u_j~,
\end{equation}
\noindent 
where $\partial I$ denotes the $n{\times}n$ matrix where each diagonal entry is the differential operator $d/dt$ and each off-diagonal entry is 0.
\end{proposition}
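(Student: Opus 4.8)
The plan is to recognize~\eqref{eq:in-out} as an instance of Cramer's rule applied to the ODE system~\eqref{eq:ode}, carried out over the commutative ring of constant-coefficient differential operators. First I would rewrite the ODEs. Writing $\partial = d/dt$ and moving $Ax$ to the left, the system $x'(t) = A x(t) + u(t)$ becomes
\begin{equation} \label{eq:operator-system}
    (\partial I - A)\, x ~=~ u,
\end{equation}
where $\partial I - A$ is now an $n \times n$ matrix whose $(k,\ell)$ entry is the first-order differential operator $\partial\,\delta_{k\ell} - a_{k\ell}$. Because the entries $a_{k\ell}$ of $A$ are constants (polynomials in the parameters $k_{ij}$), every such entry lies in the commutative ring $R := \mathbb{R}[\partial]$ of linear differential operators with constant coefficients; in particular the entries of $\partial I - A$ commute with one another. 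This is the key observation that lets me treat $\partial I - A$ as an ordinary matrix over a commutative ring and apply purely algebraic determinant identities.

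Next I would apply Cramer's rule over $R$. For an $n \times n$ matrix $M$ over a commutative ring, any solution of $Mx = b$ satisfies $\det(M)\, x_i = \det\!\left[ M^{(i)} \right]$, where $M^{(i)}$ denotes $M$ with its $i$-th column replaced by $b$; this follows from the adjugate identity $\mathrm{adj}(M)\,M = \det(M)\, I$, a formal polynomial identity valid over any commutative ring. Applying it to $M = \partial I - A$ and $b = u$ in~\eqref{eq:operator-system} yields, for each $i \in \Out$,
\begin{equation} \label{eq:cramer-step}
    \det(\partial I - A)\, x_i ~=~ \det \left[ (\partial I - A)^{(i)} \right],
\end{equation}
read as an equality of differential operators applied to the solution $x$; here $\det(\partial I - A)$ is the constant-coefficient operator obtained by expanding the determinant as a polynomial in $\partial$.

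Finally I would expand the right-hand side of~\eqref{eq:cramer-step} by cofactors along its $i$-th column, whose entries are exactly $u_1, \dots, u_n$. Since deleting column $i$ from $(\partial I - A)^{(i)}$ returns the same submatrix as deleting column $i$ from $\partial I - A$, the Laplace expansion gives
\[
    \det \left[ (\partial I - A)^{(i)} \right] ~=~ \sum_{j=1}^{n} (-1)^{i+j}\, u_j\, \det \left[ (\partial I - A)^{j,i} \right].
\]
Because $u_j \equiv 0$ for $j \notin \In$, the sum collapses to $j \in \In$, and substituting $y_i = x_i$ (valid since $i \in \Out$) produces exactly~\eqref{eq:in-out}. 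The one point that requires care is the first step: justifying that Cramer's rule and cofactor expansion, which are identities over commutative rings, transfer faithfully to the operator setting. This rests entirely on the constancy of the $k_{ij}$, which guarantees that the operator entries commute; once that is in place, the remaining steps are purely formal.
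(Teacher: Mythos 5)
Your proof is correct. Note that the paper itself states this proposition without proof, citing it as Theorem 2 of Meshkat--Sullivant--Eisenberg [MSE]; your argument --- rewriting the ODEs as $(\partial I - A)x = u$ over the commutative ring $\mathbb{R}[\partial]$ of constant-coefficient differential operators, applying the adjugate/Cramer identity, and expanding by cofactors along the replaced column --- is essentially the standard argument used in that reference, so there is nothing to reconcile. (One cosmetic point: in your final display the operator $\det\left[(\partial I - A)^{j,i}\right]$ should be written acting on $u_j$, i.e.\ to its left, rather than multiplied on the right, as you in fact do when quoting the target equation~\eqref{eq:in-out}.)
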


\begin{example}[Example~\ref{ex:3-cycle-matrix-continued}, continued] \label{ex:3-cycle-again}
For the model in Figure~\ref{fig:cycle-running-example}, we use the matrix $A$ shown in~\eqref{eq:A-matrix-for-0} to compute the input-output equation~\eqref{eq:in-out} (where $i=2$ and $j=1$): 
    \begin{align*}
        \det 
        \begin{pmatrix}
        \frac{d}{dt} +k_{01} + k_{21} & 0& -k_{13} \\
         -k_{21} & \frac{d}{dt} +k_{32} & 0 \\
         0 & -k_{32} & \frac{d}{dt} +k_{03} +k_{13}
    \end{pmatrix}
        y_2
        ~{=}~
        (-1)^{2+1}
        \det 
        \begin{pmatrix}
         -k_{21} & 0 \\
         0  & \frac{d}{dt} +k_{03} +k_{13}
    \end{pmatrix}
    u_1~.
    \end{align*}
This equation expands as follows:
    \begin{align} \label{eq:i-o-running-ex}
    y_2^{(3)}
    & +
    (k_{01}  +k_{03} +k_{13} + k_{21}  +k_{32} ) y_2^{(2)}
     \\
    \notag
    & +
    (k_{01}k_{03}+
    k_{01}k_{13}+
    k_{01}k_{32}+
    k_{03}k_{21}+
    k_{03}k_{32}+
    k_{13}k_{21}+
    k_{13}k_{32}+
    k_{21}k_{32})
    y_2^{(1)}
     \\
    \notag
    & +
    (k_{01}k_{03}k_{32}+ 
    k_{01}k_{13}k_{32}+
    k_{03}k_{21}k_{32}
    )
    y_2
         \\
    \notag
    & =~
    k_{21} u_1^{(1)} + 
    (k_{03} +k_{13}) k_{21} u_1~.
    \end{align}
\end{example}

From the input-output equation(s)~\eqref{eq:in-out}, 
we derive a \textbf{coefficient map}:
$$\msc = (\msc_1,\msc_2,\dots, \msc_m) : \R^{\lvert E\rvert + \lvert \Leak\rvert} \rightarrow \R ^m~,$$ 
which
evaluates each vector of parameters $k_{ij}$ (arising from edges $(j,i)\in E$, or $i=0$ and $j\in\Leak$) at 
a vector of a subset of the coefficients of the input-output equation(s), 
such that all non-constant coefficients are included.
(Here, $m$ denotes the number of some choice of 
such coefficients.)

\begin{remark}
In prior works
(such as~\cite{BGMSS,CJSSS}), constant coefficients are excluded in the coefficient map.  In contrast, we allow for some constant coefficients.  This freedom makes it easier to state some results, without affecting identifiability properties.
\end{remark}

\begin{example}[Example~\ref{ex:3-cycle-again}, continued] \label{ex:input-output}
We revisit the model in Figure~\ref{fig:cycle-running-example}.  The input-output equation~\eqref{eq:i-o-running-ex} yields the coefficient map 
$\msc : \R^{5} \rightarrow \R ^5$, defined by: 
\begin{align} \label{eq:coeff-map-for-main-ex}
    \notag
    \msc_1~&=~ k_{01}  +k_{03} +k_{13} + k_{21}  +k_{32} \\
    \notag
    \msc_2~&=~
    k_{01}k_{03}+
    k_{01}k_{13}+
    k_{01}k_{32}+
    k_{03}k_{21}+
    k_{03}k_{32}+
    k_{13}k_{21}+
    k_{13}k_{32}+
    k_{21}k_{32}
    \\
    \msc_3~&=~
    k_{01}k_{03}k_{32}+ 
    k_{01}k_{13}k_{32}+
    k_{03}k_{21}k_{32}
    \\
    \notag
    \msc_4~&=~
     k_{21}
    \\
    \notag
    \msc_5~&=~(k_{03} +k_{13}) k_{21}~.
\end{align}
\end{example}    

We end this subsection by showing, for models with two or more inputs or outputs, how to view each coefficient of the coefficient map as 
coming from 
of a model with one input and one output. 

\begin{proposition}[Input-output equations] \label{prop:coeff-many-in-or-out} 
Let $\mathcal{M} = (G, \In, \Out, \Leak)$ be a linear compartmental model 
with at least one input, and let $A$ be the corresponding compartmental matrix. 
Let $i\in \Out$, and consider the corresponding 
input-output equation~\eqref{eq:in-out} for $\mathcal{M}$, which we rewrite below, for convenience:
\begin{equation} \label{eq:in-out-again}
    \det(\partial I-A)y_{i} ~=~ \sum_{j\in In}(-1)^{i+j}\det\left[(\partial I-A)^{j,i}\right]u_j~.
\end{equation}
For every non-negative integer $d\vcentcolon$
\begin{enumerate}
    \item The coefficient of $y_i^{(d)}$ 
    in the left-hand side of equation~\eqref{eq:in-out-again}
    equals the coefficient of $y_i^{(d)}$ in the (unique) input-output equation~\eqref{eq:in-out} for the one-input, one-output model $(G, \{j^*\}, \{i\}, Leak)$, where 
    $j^*$ is any input compartment (that is, $j^* \in \In$).
    \item For $j \in \In$, the coefficient of $u_j^{(d)}$ 
        in the right-hand side of equation~\eqref{eq:in-out-again}
    equals the coefficient of $u_j^{(d)}$ 
    in the (unique) input-output equation~\eqref{eq:in-out} for the one-input, one-output model $(G, \{j\}, \{i\}, Leak)$.
\end{enumerate}

\end{proposition}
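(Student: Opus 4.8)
The plan is to exploit the fact that the compartmental matrix $A$ is built entirely from the graph $G$ and the leak set $\Leak$, with no reference to $\In$ or $\Out$. Indeed, inspecting the definition of $A = (a_{ij})$, every entry depends only on the edges of $G$ and on the parameters $k_{0i}$ for $i \in \Leak$; the sets $\In$ and $\Out$ play no role whatsoever. Consequently, the three models $\mathcal{M} = (G, \In, \Out, \Leak)$, $(G, \{j^*\}, \{i\}, \Leak)$, and $(G, \{j\}, \{i\}, \Leak)$ all share the identical compartmental matrix $A$. Since the choice of $\In$ and $\Out$ only governs which equations~\eqref{eq:in-out} we write down (namely, the index $i \in \Out$ and the summation range $j \in \In$) and never alters the operator-valued determinants appearing in them, the whole proposition should follow by matching terms.

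For part (1), I would observe that the left-hand side of~\eqref{eq:in-out-again} is $\det(\partial I - A) y_i$, and the polynomial $\det(\partial I - A)$, viewed as a polynomial in the operator $\partial = d/dt$, is determined by $A$ alone. Applying Proposition~\ref{prop:in/out} to the one-input, one-output model $(G, \{j^*\}, \{i\}, \Leak)$ — whose compartmental matrix is this same $A$ — its (unique, since there is a single output) input-output equation has left-hand side $\det(\partial I - A) y_i$, literally the same expression. Extracting the coefficient of $y_i^{(d)}$ from each, i.e.\ the coefficient of $\partial^d$ in $\det(\partial I - A)$, then yields equal coefficients. The freedom to let $j^*$ be any input compartment is immediate, because the left-hand side does not involve $j^*$ at all; we only require $j^* \in \In$ so that $(G, \{j^*\}, \{i\}, \Leak)$ is a legitimate one-input model, which is possible since $\mathcal{M}$ has at least one input.

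For part (2), I would fix $j \in \In$. On the right-hand side of~\eqref{eq:in-out-again}, the only summand containing the input variable $u_j$ is $(-1)^{i+j} \det[(\partial I - A)^{j,i}] u_j$. Because the input variables $u_{j'}$ for distinct $j' \in \In$, together with all of their time-derivatives, are distinct symbols, the coefficient of $u_j^{(d)}$ in the entire right-hand side equals its coefficient in this single summand, namely the coefficient of $\partial^d$ in $(-1)^{i+j} \det[(\partial I - A)^{j,i}]$. Applying Proposition~\ref{prop:in/out} to the one-input, one-output model $(G, \{j\}, \{i\}, \Leak)$, again with compartmental matrix $A$, its (unique) input-output equation has right-hand side $(-1)^{i+j} \det[(\partial I - A)^{j,i}] u_j$, so the coefficient of $u_j^{(d)}$ agrees.

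I do not expect a serious obstacle here: the entire content is the invariance of $A$ under changes of $\In$ and $\Out$, combined with the independence of the distinct input symbols $u_j$. The only points needing care are (a) confirming directly from the definition of the compartmental matrix that no entry references $\In$ or $\Out$, and (b) noting that coefficient extraction on the right-hand side never mixes contributions from different inputs, which is precisely what licenses treating each $u_j$-summand separately as a one-input equation.
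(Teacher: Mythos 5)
Your proposal is correct and is essentially the same argument as the paper's: the paper's proof likewise rests on the observation that the compartmental matrix $A$ is independent of $\In$ and $\Out$, so the input-output equation~\eqref{eq:in-out} for $(G, \{j\}, \{i\}, \Leak)$ is $\det(\partial I-A)y_{i} = (-1)^{i+j}\det\left[(\partial I-A)^{j,i}\right]u_j$, from which both parts follow by matching the left-hand side and the single $u_j$-summand, respectively. You have merely spelled out the coefficient-extraction details that the paper leaves implicit.
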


\begin{proof}
This result follows directly from 

the fact that the input-output equation~\eqref{eq:in-out} for the model 
$(G, \{j\}, \{i\}, Leak)$
is as follows:
\begin{equation*} 
    \det(\partial I-A)y_{i} ~=~ (-1)^{i+j}\det\left[(\partial I-A)^{j,i}\right]u_j~.
\end{equation*}
\vspace{-0.1in}
\end{proof}

We use Proposition~\ref{prop:coeff-many-in-or-out} later in some of our proofs in Section~\ref{sec:cycle-models}.

\subsection{Identifiability} \label{sec:ident}

From time-series input-output data $(u(t);y(t))$ arising from a model, we want to identify the values of the parameters $k_{ij}$. 
More precisely, from a generic choice of the inputs and initial conditions, we wish to be able to recover the parameters (at least locally, that is, up to a finite set) from exact measurements of both the inputs and the outputs. This is the notion of {\em generic local identifiability}. 

This concept of identifiability was proven by 
Ovchinnikov {\em et al.}
to be equivalent (for strongly connected models with at least one input) to the following definition, which assesses identifiability by way of coefficient maps~\cite[Main Result 3]{OPT}. 

\begin{definition}[Identifiability]\label{def:ident} 
Consider
a linear compartmental model $ \mathcal M =(G, \In, \Out, \Leak)$ that is strongly connected and has one or more inputs.  Let $G=(V,E)$, and let $\msc :  \R^{|E| + |\Leak|} \rightarrow \R ^m$ denote the coefficient map for $\mathcal M$.
\begin{enumerate}
    \item  $\mathcal M$ is \textbf{generically locally identifiable} 
    (or {\bf identifiable}, for short)
    if, except possibly on a set of measure zero, every point in $\R^{|E| + |\Leak|}$ admits a neighborhood on which     
$\msc$ is injective.
    \item $\mathcal M$ is {\bf unidentifiable} if $\mathcal M$ is not generically locally identifiable.
\end{enumerate}
\end{definition}

We rely on (yet another) equivalent definition of identifiability, as follows~\cite[Proposition~2]{MSE}.

\begin{proposition} \label{prop:rank-matrix-for-identifiability}
Consider
a linear compartmental model $ \mathcal M =(G, \In, \Out, \Leak)$ that is strongly connected and has one or more inputs.  Let $G=(V,E)$, and let $\msc :  \R^{|E| + |\Leak|} \rightarrow \R ^m$ denote the coefficient map for $\mathcal M$. Then $\mathcal M$ is {generically locally identifiable} if and only if the rank of the Jacobian matrix of $\msc$, when evaluated at a generic point, is equal to $|E| + |Leak|$.
\end{proposition}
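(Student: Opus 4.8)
The plan is to deduce this from two standard facts of differential topology applied to the polynomial map $\msc$: that an immersion is locally injective, and the Constant Rank Theorem. Write $N := |E| + |\Leak|$ for the dimension of the domain. Since each coefficient of the input-output equations is a polynomial in the parameters $k_{ij}$ (by the determinantal formula of Proposition~\ref{prop:in/out}), the map $\msc : \R^N \ra \R^m$ is polynomial, hence $C^\infty$, and every entry of its Jacobian $J\msc$ is a polynomial. Let $r$ denote the maximal value of $\rank J\msc(k)$ as $k$ ranges over $\R^N$; this maximum is attained exactly on the set $\Omega$ where some $r \times r$ minor of $J\msc$ is nonzero. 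Because a nonzero polynomial vanishes only on a measure-zero set, $\Omega$ is open, dense, and of full measure, and its complement (where the rank drops below $r$) is contained in a proper algebraic subset. Clearly $r \le N$. I would then show that $\mathcal M$ is generically locally identifiable if and only if $r = N$.

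For the direction assuming $r = N$: on $\Omega$ the Jacobian has full column rank, so $\msc$ is an immersion at every point of $\Omega$. By the immersion theorem (a consequence of the inverse function theorem), each point of $\Omega$ admits a neighborhood on which $\msc$ is injective. Since $\Omega$ has full measure, Definition~\ref{def:ident} is satisfied and $\mathcal M$ is identifiable.

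For the converse, I would argue by contraposition and suppose $r < N$. On $\Omega$ the rank of $J\msc$ is constant (equal to $r$), so the Constant Rank Theorem applies at every point of $\Omega$: after local diffeomorphisms of the source and target, $\msc$ agrees near such a point with the linear projection $(x_1,\dots,x_N) \mapsto (x_1,\dots,x_r,0,\dots,0)$. This projection has fibers of dimension $N - r \ge 1$, hence is not injective on any neighborhood; consequently $\msc$ is not locally injective at any point of the dense set $\Omega$. This contradicts generic local identifiability, which forces local injectivity off a measure-zero set and hence on a dense set. Therefore $r = N$, completing the equivalence.

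The main point requiring care is the reconciliation of the two distinct notions of ``generic'' in play: the measure-zero exceptional set in Definition~\ref{def:ident} versus the generic (maximal) rank of the Jacobian. Polynomiality of $\msc$ is exactly what makes these compatible, since it guarantees that the locus where the rank is non-maximal is a proper algebraic set and therefore measure zero; it is also what lets me treat the rank as locally constant on $\Omega$ so that the Constant Rank Theorem is available. The remaining steps --- local injectivity of immersions and non-injectivity of rank-deficient constant-rank maps --- are routine once this framework is in place.
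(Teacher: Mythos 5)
Your proof is correct, but there is nothing in the paper to compare it against: the paper does not prove this proposition, it simply quotes it from Meshkat--Sullivant--Eisenberg \cite[Proposition~2]{MSE} as one of its background results. Your argument is the standard way this equivalence is established, and it is complete: polynomiality of $\msc$ (via the determinantal formula of Proposition~\ref{prop:in/out}) guarantees that the locus where the Jacobian has non-maximal rank is contained in a proper algebraic subset, hence has measure zero, so the generic rank $r$ is attained on an open, dense, full-measure set $\Omega$; when $r = |E|+|\Leak|$ the immersion theorem gives local injectivity at every point of $\Omega$, verifying Definition~\ref{def:ident}; when $r < |E|+|\Leak|$ the constant rank theorem (applicable precisely because $\Omega$ is open and the rank is constant there) shows that no point of $\Omega$ admits a neighborhood on which $\msc$ is injective, and since the complement of any measure-zero set is dense and must meet the nonempty open set $\Omega$, this contradicts Definition~\ref{def:ident}. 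One point worth making explicit: strong connectivity and the existence of an input are never used in your argument; they are hypotheses only so that Definition~\ref{def:ident} (identifiability read off the coefficient map) agrees with the underlying ODE-theoretic notion of identifiability, per \cite[Main Result 3]{OPT}. What your proof establishes is exactly the equivalence between Definition~\ref{def:ident} and the generic-rank condition, which is precisely what the statement asserts, so this is not a gap.
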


\begin{example}[Example~\ref{ex:input-output}, continued] \label{ex:jacobian-matrix}
For the coefficient map $\msc$ shown in equation~\eqref{eq:coeff-map-for-main-ex},
the Jacobian matrix of $\msc$ 
--
where the columns correspond to the parameters 
$ k_{21}, k_{32}, k_{13}, k_{01}, k_{03}$, 
respectively
-- 
is the following $5 \times 5$ matrix:
    \begin{align}       
    \label{eq:matrix-for-example}
    \begin{pmatrix}
        1 & 1 & 1 & 1 & 1 \\
        k_{03}+k_{13}+k_{32}
            & k_{01}+k_{03}+k_{13}+k_{21}
            & k_{01}+k_{21}+k_{32}
            &
          k_{03}+k_{13}+k_{32} 
        & k_{01}+k_{21}+k_{32} 
            \\
        k_{03} k_{32}
        &
        k_{01}(k_{03}+k_{13}) + k_{03}k_{21}
        &
        k_{01}k_{32}
        &
        k_{32}(k_{03}+k_{13}) 
        &
        k_{32}(k_{01}+k_{21})
\\
        1 & 0 & 0 & 0 & 0 \\
        k_{03}+k_{13}& 0 & k_{21} & 0 & k_{21}  \\
    \end{pmatrix}~.
    \end{align}  
It is straightforward to check (by hand or computer) that 
the determinant of this matrix is the following nonzero polynomial (in the parameters $k_{ij}$):
\begin{align} \label{eq:s-locus-example}
    k_{21}^2  k_{32} (k_{01} + k_{21} - k_{32}) ~.
\end{align}  
Thus, by Proposition~\ref{prop:rank-matrix-for-identifiability}, the model in Figure~\ref{fig:cycle-running-example} is generically locally identifiable.
\end{example}

Proposition~\ref{prop:rank-matrix-for-identifiability} 
translates the concept of identifiability to a property concerning Jacobian matrices, so the following well-known result will be useful~\cite[Theorem 2.2]{ER}.  

\begin{lemma}[Algebraic independence and the Jacobian] \label{lem:alg-indep-general}
Consider $n$ polynomials in $n$ variables, $f_1,f_2,\dots, f_n \in \mathbb{C}[x_1,x_2,\dots,x_n]$.
The Jacobian matrix of these polynomials 
has nonzero determinant if and only if the polynomials are algebraically
independent. 
\end{lemma}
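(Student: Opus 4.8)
The plan is to prove both implications at once, via the unifying identity that the generic rank of the Jacobian matrix $J = (\partial f_i/\partial x_j)$, regarded over the fraction field $K := \mathbb{C}(x_1,\ldots,x_n)$, equals the transcendence degree of $L := \mathbb{C}(f_1,\ldots,f_n)$ over $\mathbb{C}$. Since $J$ is square of size $n$, its determinant is nonzero as a polynomial exactly when $\mathrm{rank}_K\, J = n$, that is, when $\mathrm{trdeg}_{\mathbb{C}}\, L = n$, which is precisely algebraic independence of $f_1,\ldots,f_n$. So everything reduces to this rank--transcendence-degree identity.

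For the easy inequality $\mathrm{rank}_K\, J \le \mathrm{trdeg}_{\mathbb{C}}\, L$ (equivalently, algebraic dependence forces $\det J \equiv 0$), I would argue directly. Suppose the $f_i$ are algebraically dependent and pick a nonzero $P \in \mathbb{C}[y_1,\ldots,y_n]$ of minimal total degree with $P(f_1,\ldots,f_n)=0$; note $P$ is nonconstant. Applying $\partial/\partial x_j$ and the chain rule gives, for every $j$,
\[
\sum_{i=1}^n \left(\frac{\partial P}{\partial y_i}\right)(f_1,\ldots,f_n)\,\frac{\partial f_i}{\partial x_j} \;=\; 0,
\]
so the vector $v$ with entries $v_i = (\partial P/\partial y_i)(f_1,\ldots,f_n)$ lies in the left kernel of $J$. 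Because we are in characteristic zero, some $\partial P/\partial y_i$ is a nonzero polynomial of strictly smaller degree than $P$, so by minimality it cannot vanish on $(f_1,\ldots,f_n)$; hence $v \neq 0$, the matrix $J$ is singular over $K$, and $\det J \equiv 0$.

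For the reverse inequality, the substantive direction, I would pass to derivations. Since $\mathbb{C}$ has characteristic zero, the extension $K/\mathbb{C}$ is separably generated, so the $K$-vector space of $\mathbb{C}$-derivations $\mathrm{Der}_{\mathbb{C}}(K,K)$ is free of rank $n$ with basis $\partial/\partial x_1,\ldots,\partial/\partial x_n$. A derivation $D=\sum_j c_j\,\partial/\partial x_j$ kills every element of $L$ if and only if $D(f_i)=0$ for all $i$, that is, if and only if the coefficient vector $(c_j)$ lies in the kernel of $J$; thus $\dim_K \ker J = \dim_K \mathrm{Der}_L(K,K)$. By the standard dimension formula for derivations of a separably generated extension, $\dim_K \mathrm{Der}_L(K,K) = \mathrm{trdeg}(K/L)$, and since $\mathrm{trdeg}(K/\mathbb{C})=n$ and transcendence degree is additive in towers, $\mathrm{trdeg}(K/L) = n - \mathrm{trdeg}_{\mathbb{C}}\, L$. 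Combining these, $n - \mathrm{rank}_K\, J = n - \mathrm{trdeg}_{\mathbb{C}}\, L$, which is the desired identity.

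The main obstacle is the derivation-theoretic input in the last paragraph: the equality $\dim_K \mathrm{Der}_L(K,K) = \mathrm{trdeg}(K/L)$ for the separably generated tower $\mathbb{C}\subseteq L\subseteq K$. This is where the characteristic-zero hypothesis is genuinely used (in positive characteristic the statement fails, for instance for $f = x^p$), and it rests on the structure theory of derivations of function fields rather than on any explicit computation. Everything else is linear algebra over $K$ together with the chain rule.
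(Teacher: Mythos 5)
Your proof is correct, but there is no argument in the paper to compare it against: the paper does not prove this lemma, it cites it as a known result (\cite[Theorem~2.2]{ER}) and uses it as a black box. So what you have produced is a self-contained proof of the cited fact, and it is the standard one. The easy direction is complete: taking a relation $P(f_1,\dots,f_n)=0$ of \emph{minimal} total degree, the chain rule puts the vector $v$ with $v_i=(\partial P/\partial y_i)(f_1,\dots,f_n)$ in the left kernel of $J$, and your minimality argument correctly supplies the key point --- in characteristic zero some $\partial P/\partial y_i$ is a nonzero polynomial of strictly smaller degree, hence cannot vanish at $(f_1,\dots,f_n)$, so $v\neq 0$ and $\det J\equiv 0$. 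The substantive direction is also correct: the identification of the right kernel of $J$ over $K=\mathbb{C}(x_1,\dots,x_n)$ with $\mathrm{Der}_L(K,K)$ (valid because a $\mathbb{C}$-derivation killing the generators $f_i$ kills all of $L=\mathbb{C}(f_1,\dots,f_n)$ by the Leibniz and quotient rules), together with rank--nullity and additivity of transcendence degree in towers, reduces everything to the dimension formula $\dim_K \mathrm{Der}_L(K,K)=\mathrm{trdeg}(K/L)$ for the finitely generated, separably generated extension $K/L$. That formula is genuinely the load-bearing input, and you are right that it is where characteristic zero enters (your $f=x^p$ counterexample is apt); it is a standard textbook fact (e.g.\ Eisenbud, \emph{Commutative Algebra}, Theorem~16.14, or Matsumura), but if this proof were to be inserted into the paper, that step should carry a precise citation, since the rest of your argument is elementary linear algebra over $K$ plus the chain rule.
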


Next, we state a result that will allow us to analyze modified versions of coefficient maps.

\begin{lemma}[Replace a coefficient by a quotient] \label{lem:modify-coeffs-by-division}
    Consider a map $\msc = (\msc_1,\msc_2, \dots, \msc_m): \mathbb{R}^{\ell} \to \mathbb{R}^m$, 
with  $\msc_1, \msc_2, \dots, \msc_m \in \mathbb{R}(x_1,x_2,\dots, x_{\ell}) \smallsetminus \{0\} $ (that is, each $\msc_i $ is a 
nonzero rational function in $\ell$ variables over $\mathbb{R}$).    
    Consider a map $\mathsf{d}$ obtained from $\mathsf{c}$ by replacing some $\msc_i$ (where $1 \leq i \leq m$) by $\msc_i / \msc_j$ (where $1 \leq j \leq m$, with $j \neq i$).   
    Then the 
    rank of the 
    Jacobian matrix of 
    $\msc$, when evaluated at a generic point, 
    equals the rank of the 
    Jacobian matrix of 
    $\mathsf{d}$, when evaluated at a generic point. 
\end{lemma}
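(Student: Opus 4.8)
The plan is to work over the field of rational functions $K \vcentcolon= \R(x_1, x_2, \dots, x_\ell)$ and to show that the Jacobian matrix of $\mathsf d$ is obtained from that of $\msc$ by \emph{invertible row operations over $K$}, which automatically preserve rank. The fact I would invoke at the outset is that, for a matrix whose entries are rational functions in $x_1, \dots, x_\ell$, the rank evaluated at a generic point coincides with the rank of the matrix regarded over the field $K$: the latter is the largest $r$ for which some $r \times r$ minor is a nonzero element of $K$, and such a minor, being a nonzero rational function, is nonvanishing on a dense open (hence generic) subset of $\R^\ell$, while every larger minor vanishes identically. Thus it suffices to prove that the Jacobians of $\msc$ and $\mathsf d$ have equal rank over $K$.

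Next I would pin down exactly how the two Jacobians differ. Writing $J(\msc)$ and $J(\mathsf d)$ for the $m \times \ell$ Jacobian matrices, the maps $\msc$ and $\mathsf d$ agree in every coordinate except the $i$-th, so $J(\msc)$ and $J(\mathsf d)$ agree in every row except row $i$. For row $i$, the quotient rule gives, for each variable $x_k$,
\[
\frac{\partial}{\partial x_k}\left(\frac{\msc_i}{\msc_j}\right) ~=~ \frac{1}{\msc_j}\,\frac{\partial \msc_i}{\partial x_k} ~-~ \frac{\msc_i}{\msc_j^{\,2}}\,\frac{\partial \msc_j}{\partial x_k}~.
\]
In other words, row $i$ of $J(\mathsf d)$ equals $\tfrac{1}{\msc_j}$ times row $i$ of $J(\msc)$ minus $\tfrac{\msc_i}{\msc_j^{\,2}}$ times row $j$ of $J(\msc)$. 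The key structural point is that, because $j \neq i$, row $j$ is a genuine row of \emph{both} matrices and is left unchanged.

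I would then conclude by elementary linear algebra over $K$. Starting from $J(\msc)$, scale row $i$ by the factor $1/\msc_j$; since $\msc_j \in K \smallsetminus \{0\}$ by hypothesis, this is a well-defined nonzero element of $K$, so the scaling is invertible and does not change rank. Then subtract $\tfrac{\msc_i}{\msc_j^{\,2}}$ times row $j$ from row $i$; this is an elementary row operation, again rank-preserving over $K$. By the displayed identity, the matrix produced by these two operations is exactly $J(\mathsf d)$. Hence $\rank_K J(\mathsf d) = \rank_K J(\msc)$, and by the first paragraph the generic ranks coincide, as claimed.

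The argument has no serious obstacle beyond bookkeeping; the one point to state carefully is the identification of the generic (pointwise) rank with the rank over $K$, together with the observation that retaining $\msc_j$ (guaranteed by $j \neq i$) is precisely what turns the replacement into an invertible row operation rather than a genuine change of the row space. Equivalently, and in the spirit of Lemma~\ref{lem:alg-indep-general}, since $\msc_i = (\msc_i/\msc_j)\cdot \msc_j$, the two families $\{\msc_1, \dots, \msc_m\}$ and $\{\msc_1, \dots, \msc_{i-1},\, \msc_i/\msc_j,\, \msc_{i+1}, \dots, \msc_m\}$ generate the same subfield of $K$, hence have the same transcendence degree over $\R$, which is the generic Jacobian rank.
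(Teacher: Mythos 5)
Your proposal is correct and follows essentially the same route as the paper: both compute, via the quotient rule, that the Jacobian of $\mathsf{d}$ arises from that of $\msc$ by replacing row $i$ with $(\msc_j)^{-1} r_i - \msc_i (\msc_j)^{-2} r_j$, an invertible row operation since $\msc_j \neq 0$ and $j \neq i$. Your write-up merely makes explicit two points the paper leaves as ``the result now follows directly'' --- the identification of generic rank with rank over the function field, and the invertibility of the row operations --- so it is a fuller version of the same argument.
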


\begin{proof}
For $1 \leq k \leq m$, let $r_k$ denote the $k$-th row of the Jacobian matrix of $\msc$.
It is straightforward to check that the 
    Jacobian matrix of 
    $\mathsf{d}$ is obtained from the
    Jacobian matrix of 
    $\msc$ by replacing 
    $r_i$ (the $i$-th row) by 
    the linear combination
    $(c_j)^{-1} r_i + -c_i (c_j)^{-2} r_j$.   
The result now follows directly.
\end{proof}

\subsection{Identifiability results concerning model operations and certain model classes} \label{sec:identifiability-prior-results}

In this section, we recall several results on identifiability.  
The first states that 
identifiability is preserved when inputs or outputs are added~\cite[Proposition~4.1]{GHMS}.

\begin{proposition}[Adding inputs or outputs,~\cite{GHMS}] \label{prop:add-in-out}
Consider a model $\widetilde{\mathcal{M}}$  obtained from a strongly connected linear compartmental model $\mathcal M$
by adding one or more inputs and/or one or more outputs.     
If $\mathcal M$ is generically locally
identifiable, 
then so is $\widetilde{\mathcal{M}}$. 
\end{proposition}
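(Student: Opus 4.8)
The plan is to reduce everything to the rank criterion of Proposition~\ref{prop:rank-matrix-for-identifiability} and to exploit the fact that passing from $\mathcal M$ to $\widetilde{\mathcal M}$ changes neither the underlying graph $G$ nor the leak set. First I would record the structural observation that adding inputs and/or outputs leaves $G$ and $\Leak$ untouched; hence $\widetilde{\mathcal M}$ has the same parameters as $\mathcal M$, the two coefficient maps $\msc$ and $\widetilde{\msc}$ share the same domain $\R^{|E|+|\Leak|}$, and $\widetilde{\mathcal M}$ is again strongly connected with at least one input. In particular Proposition~\ref{prop:rank-matrix-for-identifiability} applies to both models, and in both cases full identifiability means the generic rank of the Jacobian equals $|E|+|\Leak|$, the common number of columns.

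The key step is to show that the coordinate functions of $\msc$ occur, up to constant coefficients, among those of $\widetilde{\msc}$. For this I would invoke the form of the input-output equations in Proposition~\ref{prop:in/out} together with Proposition~\ref{prop:coeff-many-in-or-out}. The left-hand side coefficients of each equation come from $\det(\partial I - A)$, which depends on neither the chosen input nor the chosen output; and for a fixed pair $(i,j)$ the right-hand side coefficients come from $\det[(\partial I - A)^{j,i}]$. Thus when we enlarge $\In$ and/or $\Out$, every input-output equation of $\mathcal M$ (for a pair $(i,j)$ with $i\in\Out$, $j\in\In$) is still present for $\widetilde{\mathcal M}$ with exactly the same coefficient functions, and the only effect of the enlargement is to introduce additional equations, hence additional coefficient functions. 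Since constant coefficients contribute identically-zero rows to a Jacobian, I can choose $\widetilde{\msc}$ so that its list of coordinate functions contains the list for $\msc$; the extra entries produce only extra rows in the Jacobian.

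Consequently $\mathrm{Jac}(\widetilde{\msc})$ is obtained from $\mathrm{Jac}(\msc)$ by appending rows (the two matrices having the same columns, indexed by the parameters). The generic rank of a matrix of rational functions equals the largest $r$ for which some $r\times r$ minor is not the zero function; since every minor of $\mathrm{Jac}(\msc)$ is also a minor of $\mathrm{Jac}(\widetilde{\msc})$, the generic rank cannot decrease. Therefore, if $\mathcal M$ is generically locally identifiable, then by Proposition~\ref{prop:rank-matrix-for-identifiability} the generic rank of $\mathrm{Jac}(\msc)$ is $|E|+|\Leak|$, whence the generic rank of $\mathrm{Jac}(\widetilde{\msc})$ is at least $|E|+|\Leak|$; as it is also at most the number of columns $|E|+|\Leak|$, it equals $|E|+|\Leak|$. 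A second application of Proposition~\ref{prop:rank-matrix-for-identifiability} then gives that $\widetilde{\mathcal M}$ is generically locally identifiable.

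I do not expect a serious obstacle here, since adding rows to a matrix only increases generic rank. The one point requiring care, and the step I would write out most carefully, is the bookkeeping that the enlargement genuinely preserves the existing coefficient functions rather than altering them: one must confirm via Proposition~\ref{prop:coeff-many-in-or-out} that the coefficient of each $y_i^{(d)}$ and each $u_j^{(d)}$ attached to an original pair $(i,j)$ is unchanged, and separately handle the harmless freedom in which constant coefficients are retained in the coefficient map.
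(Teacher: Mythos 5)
Your proof is correct and complete: since adding inputs or outputs changes neither the graph, the leak set, nor the compartmental matrix $A$, Proposition~\ref{prop:coeff-many-in-or-out} gives that every non-constant coefficient of $\msc$ reappears verbatim among the coordinates of $\widetilde{\msc}$, so $\mathrm{Jac}(\widetilde{\msc})$ contains $\mathrm{Jac}(\msc)$ as a row-submatrix with the same column set and the generic-rank criterion of Proposition~\ref{prop:rank-matrix-for-identifiability} transfers immediately. The paper itself gives no proof of this statement (it is imported from~\cite{GHMS}), and your argument is essentially the one in that cited source, so there is nothing to flag.
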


The next result, which is 
due to Bortner~{\em et~al.}~\cite[Theorem~5.2]{BGMSS}, characterizes identifiability in bidirected-tree models (Definition~\ref{def:types-of-models}(1)). 

\begin{proposition}[Bidirected-tree models,~\cite{BGMSS}]
\label{prop:classification-identifiable-models} 
Let $\mathcal M = (G, \In, \Out, \Leak)$ be a bidirected-tree model with one input and one output: $\In = \{j\}$ and $\Out = \{p\}$.
The following are equivalent:
\begin{enumerate}
    \item $\mathcal M$
is generically locally identifiable, and 
    \item 
    $|\Leak| \leq 1$ and $\text{dist}_G(j,p) \leq 1$, where $\text{dist}_G(j,p)$ is the number of edges in the 
    unique (directed) path in $G$ from compartment-$j$ to compartment-$p$.
\end{enumerate}
\end{proposition}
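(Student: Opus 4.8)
The plan is to work entirely through the Jacobian criterion of Proposition~\ref{prop:rank-matrix-for-identifiability}. Write $n = |V|$ and $d = \text{dist}_G(j,p)$, and recall that a bidirected tree on $n$ vertices has $|E| = 2(n-1)$, so the number of parameters is $P := |E| + |\Leak| = 2(n-1) + |\Leak|$. Since every constant coordinate of the coefficient map $\msc$ contributes a zero row to the Jacobian, the generic rank is at most the number $T$ of \emph{non-constant} coefficients of the input--output equation~\eqref{eq:in-out}. The whole argument then rests on computing $T$ exactly and comparing it to $P$: I will show that $T \le P$ always, with equality if and only if condition~(2) holds.

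To compute $T$ I would extract two structural facts from Proposition~\ref{prop:in/out}. First, the left-hand polynomial $\det(\partial I - A)$ is monic of degree $n$, so it contributes at most $n$ non-constant coefficients; moreover, when $\Leak = \emptyset$ every column of $A$ sums to $0$, forcing $\det A = 0$ and hence a vanishing constant term, which drops the count to $n-1$. (When $|\Leak| \ge 1$, all $n$ lower coefficients are generically non-constant.) Second, for a tree the cofactor $\det[(\partial I - A)^{j,p}]$ factors as $\pm\big(\prod_{e} k_e\big)\,\det(\partial I - A')$, where the product runs over the edges of the unique $j$--$p$ path (oriented toward $p$) and $A'$ is the principal submatrix of $A$ on the vertices off that path; this is an all-minors matrix-tree identity, which I would prove by induction on the tree. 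Consequently the right-hand polynomial has degree $n-1-d$ with leading term the path-weight monomial, giving exactly $n - \max(d,1)$ non-constant coefficients. Adding the two counts yields $T = \big(n - \mathbf 1[\Leak=\emptyset]\big) + \big(n - \max(d,1)\big)$, and a one-line case check shows $T - P = 2 - \mathbf 1[\Leak = \emptyset] - \max(d,1) - |\Leak| \le 0$, with equality precisely when $|\Leak| \le 1$ and $d \le 1$. This settles the implication $(1)\Rightarrow(2)$ at once: if~(2) fails then $T < P$, so the Jacobian rank is $< P$ and $\mathcal M$ is unidentifiable.

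For the converse $(2)\Rightarrow(1)$ there remain only the four tight cases $(d,|\Leak|) \in \{(0,0),(1,0),(0,1),(1,1)\}$, in each of which $T = P$. Here it suffices, by Lemma~\ref{lem:alg-indep-general}, to show that the $P$ non-constant coefficients are algebraically independent, i.e.\ that the square Jacobian is generically nonsingular. Since the generic rank is at least the rank at any single point, I would prove nonsingularity by evaluating the Jacobian at a conveniently chosen parameter vector and exhibiting a nonzero determinant there; Lemma~\ref{lem:modify-coeffs-by-division} can first be used to replace the numerator coefficients by their quotients with the leading path-weight $k_{pj}$ (in the cases $d=1$), which decouples that monomial from the remaining coefficients of $\det(\partial I - A')$ and makes the evaluation cleaner. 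The four cases moreover reduce essentially to two, by the symmetry between input and output and between the leak and leak-free situations, and I would organize the computation by induction on $n$ using the leaf-deletion recursion for the characteristic polynomial of a tree.

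The main obstacle is exactly this last step: proving the algebraic independence (equivalently, the nonsingular Jacobian) in the four boundary cases, where the dimension count is tight and therefore powerless. Everything else --- the column-sum relation, the cofactor factorization, and the bookkeeping of $T$ against $P$ --- is structural and routine, but the independence of the characteristic-polynomial coefficients of a tree-structured matrix together with those of a principal submatrix is a genuine inverse-spectral statement, and that is where the real work lies.
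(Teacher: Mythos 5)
First, a point of reference: the paper never proves this proposition --- it is quoted as \cite[Theorem~5.2]{BGMSS} and used as a black box --- so your attempt can only be measured against the argument in that reference, whose substance is precisely the direction you leave unfinished. Your necessity direction is essentially sound: the parameter count $P = 2(n-1)+\lvert \Leak\rvert$, the vanishing of the constant term of $\det(\partial I - A)$ when $\Leak=\varnothing$ (zero column sums), and the factorization $\det\left[(\partial I - A)^{j,p}\right] = \pm\,\kappa_{\mathrm{path}}\,\det(\partial I - A')$ over a tree are all correct; the last of these need not even be proved by a fresh induction, since it follows from the forest formula quoted in the paper as Lemma~\ref{lem:coeff-formula} (in $\widetilde G^*_p$ the output $p$ has no outgoing edges, so each component of an incoming forest is an in-tree rooted at its unique out-degree-zero vertex, forcing any component containing both $j$ and $p$ to contain the directed $j\to p$ path). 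With those facts, $T \le \bigl(n - \mathbf{1}[\Leak=\varnothing]\bigr) + \bigl(n - \max(d,1)\bigr) < P$ whenever condition (2) fails, and Proposition~\ref{prop:rank-matrix-for-identifiability} then gives unidentifiability. That half is a clean, legitimately more elementary argument than a rank computation.

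The genuine gap is the converse, and you concede it yourself: in the four tight cases $(d,\lvert\Leak\rvert)\in\{(0,0),(1,0),(0,1),(1,1)\}$ you never establish that the $P$ non-constant coefficients are algebraically independent. You propose to evaluate the Jacobian at a ``conveniently chosen parameter vector'' and to run an induction on leaf-deletion, but no such point is exhibited, no induction hypothesis is formulated, and it is far from clear that a single scheme works uniformly over all tree topologies with the leak at an arbitrary vertex. This is not a loose end to be tidied up --- it \emph{is} the theorem: the entire content of \cite[Theorem~5.2]{BGMSS} is that identifiability holds in exactly these boundary cases, and the dimension count that powers your first half is, as you say, powerless here. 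In \cite{BGMSS} this direction is carried by a detailed combinatorial analysis built on their forest expansion of the coefficients; your proposal replaces that with a statement of intent. As written, you have proved only the implication $(1)\Rightarrow(2)$, and the half requiring the real idea remains open.
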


Next, we recall two results of Gerberding, Obatake, and Shiu, which pertain to directed-cycle models 
(Definition~\ref{def:types-of-models}~(3)).
One result concerns directed-cycle models 
with up to one leak~\cite[Theorem~3.4]{GOS}, and the other pertains to models 
with (up to relabeling) input in the first compartment and output in last compartment~\cite[Corollary~3.11]{GOS}.

\begin{proposition}[Directed-cycle models with at most one leak,~\cite{GOS}]
\label{prop:cycle-1-in-1-out}
Let $n \geq 3$, 
and let 
$\mathcal M = (G, \In, \Out, \Leak)$ be an $n$-compartment directed-cycle model (with 
$\lvert \In \rvert \geq 1$ and
$\lvert \Out \rvert \geq 1$).
\begin{enumerate}
    \item If $\mathcal M $ has at most one leak (i.e., $\lvert \Leak \rvert \leq 1$), then $\mathcal{M}$ is generically locally identifiable.
    \item 
    If $\In = \{i\}$ and $\Out = \{i-1\}$ (here, $i-1$ is taken mod $n$), for some $1 \leq i  \leq n$,  
    then $\mathcal M$ is generically locally identifiable if and only if $\lvert \Leak \rvert \leq 1$.
\end{enumerate}
\end{proposition}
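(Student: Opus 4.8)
The plan is to verify identifiability through the Jacobian criterion of Proposition~\ref{prop:rank-matrix-for-identifiability}: a strongly connected model with an input is identifiable exactly when the Jacobian of its coefficient map has rank $|E| + |\Leak| = n + |\Leak|$ at a generic point, which by Lemma~\ref{lem:alg-indep-general} amounts to algebraic independence of a maximal collection of the coefficients. The engine of the argument is an explicit description of the input--output equation for a directed cycle. Writing $a_k$ for the weight of the edge $k \to k+1$ (indices mod $n$) and $d_k := a_k + k_{0k}$ for the total outflow at compartment $k$ (so $k_{0k} = 0$ unless $k \in \Leak$), the compartmental matrix $A$ is cyclic bidiagonal with diagonal $-d_k$. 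A short permutation count (only the identity and the full $n$-cycle contribute) gives
\begin{equation*}
  \det(\partial I - A) ~=~ \prod_{k=1}^{n}(\partial + d_k) ~-~ \prod_{k=1}^{n} a_k~,
\end{equation*}
and a similar count identifies each cofactor: for the one-input, one-output model $(G, \{j\}, \{i\}, \Leak)$,
\begin{equation*}
  (-1)^{i+j}\det\!\left[(\partial I - A)^{j,i}\right]
  ~=~ \Big(\prod_{\text{edges of } P_{j\to i}} a_k\Big)\prod_{m \notin V(P_{j\to i})}(\partial + d_m)~,
\end{equation*}
where $P_{j\to i}$ is the unique directed path from $j$ to $i$ in the cycle and $V(P_{j\to i})$ its vertex set. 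Reading off the coefficients of $y_i^{(\bullet)}$ and $u_j^{(\bullet)}$ then expresses the coefficient map through elementary symmetric polynomials: the left side contributes $e_1(d), \dots, e_{n-1}(d)$ and the constant term $\prod_k d_k - \prod_k a_k$, while the right side contributes $P := \prod_{P_{j\to i}} a_k$ together with $P\, e_s(d_M)$ for $M := V\setminus V(P_{j\to i})$. By Lemma~\ref{lem:modify-coeffs-by-division} I may divide the latter by $P$ and work instead with $e_1(d_M), \dots, e_{|M|}(d_M)$.

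For Proposition~\ref{prop:cycle-1-in-1-out}(1), Proposition~\ref{prop:add-in-out} reduces the claim to the one-input, one-output case, so I would fix $\In = \{j\}$, $\Out = \{i\}$ and show the coefficients above are algebraically independent in the appropriate number of variables. When $\Leak = \varnothing$ we have $d_k = a_k$, the constant term $\prod d - \prod a$ vanishes, and the surviving coefficients contain $e_1(a), \dots, e_{n-1}(a)$ together with a product $\prod_{k\in T}a_k$ over a nonempty proper subset $T$ (namely $P$, or $e_{|M|}(a_M)$ when $i = j$). The key linear-algebra input is that $e_1, \dots, e_{n-1}$ and the product of any nonempty proper subset of the variables are algebraically independent; I would prove this by noting that the common kernel of $de_1, \dots, de_{n-1}$ at a generic point is the one-dimensional direction that moves only $e_n$ (with $m$-th entry proportional to $1/\prod_{l\neq m}(a_m - a_l)$), and that the differential of such a product pairs nontrivially with this direction. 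This yields rank $n$, hence identifiability. When $|\Leak| = 1$ with leak at $\ell$, the constant term becomes $\prod d - \prod a = k_{0\ell}\prod_{k\neq \ell}d_k$; after the triangular change of variables $(a_1,\dots,a_n,k_{0\ell}) \mapsto (d_1,\dots,d_n,k_{0\ell})$, the remaining coefficients are the $d$-only functions controlled by the no-leak case (rank $n$ in the $d$-directions), while the new term supplies a nonzero $\partial/\partial k_{0\ell}$ derivative that promotes the rank to $n+1$. Concretely I would confirm in each configuration that the relevant $(n+1)\times(n+1)$ Jacobian determinant is a nonzero polynomial (a small specialization suffices).

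Proposition~\ref{prop:cycle-1-in-1-out}(2) is then almost immediate. The implication ``$|\Leak|\le 1 \Rightarrow$ identifiable'' is a special case of part~(1). For the converse I argue by contrapositive and counting: after cyclically relabeling so that $\In = \{1\}$ and $\Out = \{n\}$, the path $P_{1\to n}$ traverses every vertex, so $M = \varnothing$ and the entire right-hand cofactor collapses to the single constant $P = a_1 a_2 \cdots a_{n-1}$. Hence the input--output equation has exactly $m = n+1$ non-constant coefficients (namely $e_1(d), \dots, e_{n-1}(d)$, the constant term, and $P$) regardless of the number of leaks, while the number of parameters is $n + |\Leak|$. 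When $|\Leak| \ge 2$ the Jacobian of the coefficient map has at most $m = n+1 < n + |\Leak|$ rows, so its rank cannot reach $n + |\Leak|$; by Proposition~\ref{prop:rank-matrix-for-identifiability} the model is unidentifiable.

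The main obstacle is the algebraic-independence step inside part~(1): producing a clean, $n$-uniform proof that the symmetric coefficients $e_1(d), \dots, e_{n-1}(d)$, the mixed constant term, and the subset-products coming from the cofactor are independent. The determinant formulas make the coefficients transparent, but ruling out a surviving algebraic relation---especially in the configurations where both $P$ and the constant term involve the leak parameter---requires the cotangent-space kernel argument above (or an explicit nonvanishing Jacobian determinant) rather than a purely formal manipulation.
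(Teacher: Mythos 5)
The paper never proves this proposition itself: it is recalled without proof from Gerberding--Obatake--Shiu \cite{GOS} (their Theorem~3.4 and Corollary~3.11), so there is no in-paper argument to compare against, and your proposal has to stand on its own. Judged that way, it is a correct and essentially self-contained reconstruction, built from the same machinery the paper uses elsewhere: your permutation count of $\det(\partial I - A)$ and of the cofactors re-derives exactly the coefficient map the paper quotes as Lemma~\ref{lem:coefficient-map-cycle}; the reduction to one input and one output via Proposition~\ref{prop:add-in-out} is legitimate (directed cycles are strongly connected); and your part~(2) is complete as written: after relabeling, the input-to-output path exhausts all vertices, the right-hand side collapses to the single coefficient $P = a_1\cdots a_{n-1}$, and $n+1$ nonconstant coefficients can never give Jacobian rank $n + \lvert \Leak\rvert \geq n+2$. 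One remark on your key independence lemma ($e_1,\dots,e_{n-1}$ together with a product over a nonempty proper subset $T$): your cotangent-kernel argument is sound (the pairing is $\bigl(\prod_{k\in T}a_k\bigr)\sum_{j\in T}v_j/a_j$, which is generically nonzero since it acquires a pole as $a_m \to a_j$ for $m\notin T$, $j \in T$), but it also follows by the paper's own Galois-style argument for Lemma~\ref{lem:alg-indep}(2): if $\prod_{k\in T}a_k$ were algebraic over $\mathbb{C}(e_1,\dots,e_{n-1})$, so would be the product of all its $S_n$-conjugates, which is a power of $e_n$, contradicting Lemma~\ref{lem:alg-indep}(1).

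The only soft spot is the one-leak configuration you yourself flag: output immediately behind the input ($i = j-1$ mod $n$) with the leak $\ell$ on the path ($\ell \neq i$). There $M = \varnothing$, so the only subset-product coefficient is $P = \prod_{k\neq i}a_k$, which contains $a_\ell = d_\ell - k_{0\ell}$; both $P$ and the constant term $k_{0\ell}\prod_{k\neq\ell}d_k$ then involve $k_{0\ell}$, and your block-triangular argument does not apply verbatim. Your proposed fix (``a small specialization suffices'') does work, and the missing lines are short: the Jacobian determinant is a polynomial in $(d_1,\dots,d_n,k_{0\ell})$, so evaluate it at $k_{0\ell}=0$. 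There the row of the constant term becomes $(0,\dots,0,\prod_{k\neq\ell}d_k)$, so in the expansion along the $k_{0\ell}$-column the competing cofactor -- which contains this zero $d$-row -- vanishes, and the determinant equals $\pm\prod_{k\neq\ell}d_k$ times the $d$-Jacobian determinant of $e_1(d),\dots,e_{n-1}(d),\prod_{k\neq i}d_k$, which is nonzero by your lemma. Writing out that computation closes the only gap; everything else in the proposal is sound.
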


\subsection{Symmetric polynomials} \label{sec:sym-polyn}
Symmetric polynomials often appear in coefficient maps.  In this subsection, we recall their definition and some results on their algebraic independence.  These results are used in later sections. In what follows, we use the standard notation $[n]:=\{1,2,\dots,n\}$ for positive integers $n$.

\begin{definition} \label{def:elem-sym-poly}
Consider positive integers $k$ and $n$, with $1 \leq k \leq n$.
Let $X_n$ denote the set of variables $x_1, x_2,\hdots, x_n$. 
The $k$-th \textbf{elementary symmetric polynomial} on $X_n$, which we denote by $e_k(X_n)$, is given by
\begin{align*}
e_k(X_n) ~:=~ \sum_{\substack{I \subseteq [n] \\ |I| = k}} 
    \left( \prod_{i\in I} x_i\right) ~.
\end{align*}
\end{definition}

\begin{remark} We use the notation $e_k$, rather than $e_k(X_n)$, when the set $X_n$ can be inferred without confusion. 

\end{remark}

\begin{example} \label{ex:elem-sym-poly}
    The elementary symmetric polynomials on $X_3=\{x_1,x_2,x_3\}$ are $e_1=x_1+x_2+x_3$, $e_2=x_1x_2 + x_1 x_3 + x_2x_3$, and $e_3=x_1 x_2 x_3$.
\end{example}

\begin{lemma}[Algebraic independence] \label{lem:alg-indep}
    Let $n \geq 1$.  
    Let $e_1, e_2,\dots, e_n$ denote the elementary symmetric polynomials on the set $X_n = \{x_1, x_2,\hdots, x_n\}$.
    \begin{enumerate}
        \item The following set of (all) elementary symmetric polynomials on $X_n$ is algebraically independent over $\mathbb C$: 
    \[
    \{ e_1, e_2, \dots, e_n \}~.
    \]
        \item Assume $n \geq 2$, and let $1 \leq k \leq n-1 $.  The following set of polynomials is algebraically independent over $\mathbb C$:  
    \[
    \{ e_1, e_2, \dots, e_{n-1},~ x_1+x_2+\dots + x_k \}~.
    \]       
    \end{enumerate}    
\end{lemma}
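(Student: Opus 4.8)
The plan is to prove both parts via the Jacobian criterion of Lemma~\ref{lem:alg-indep-general}: a family of $n$ polynomials in $n$ variables over $\mathbb{C}$ is algebraically independent exactly when the determinant of its Jacobian matrix is a nonzero polynomial. Thus in each part it suffices to write down the relevant Jacobian and show it does not vanish identically.

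For part (1), I would first establish the Jacobian--Vandermonde identity. The Jacobian matrix $A$ of $(e_1, \ldots, e_n)$ has $(i,j)$-entry $\partial e_i / \partial x_j = e_{i-1}(X_n \setminus \{x_j\})$. I would argue that $\det A = \pm \prod_{1 \le a < b \le n}(x_a - x_b)$ as follows: when $x_a = x_b$ the variable multisets $X_n \setminus \{x_a\}$ and $X_n \setminus \{x_b\}$ coincide, so columns $a$ and $b$ of $A$ are equal and $\det A$ vanishes; hence each pairwise-coprime factor $(x_a - x_b)$ divides $\det A$. A degree count, $\deg \det A = \sum_{i=1}^n (i-1) = \binom{n}{2} = \deg \prod_{a<b}(x_a - x_b)$, forces $\det A$ to equal the Vandermonde times a constant, and comparing leading terms shows the constant is $\pm 1$. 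This determinant is a nonzero polynomial, so Lemma~\ref{lem:alg-indep-general} gives part (1).

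For part (2), write $p_k := x_1 + \cdots + x_k$ and consider the $n \times n$ Jacobian $J$ of $(e_1, \ldots, e_{n-1}, p_k)$. Its first $n-1$ rows are $\nabla e_1, \ldots, \nabla e_{n-1}$, and its last row is $\nabla p_k = (1,\ldots,1,0,\ldots,0)$, with $k$ ones. Expanding $\det J$ along the last row gives
\[
\det J ~=~ \sum_{m=1}^{k} (-1)^{n+m} \det(M_m),
\]
where $M_m$ is the $(n-1)\times(n-1)$ matrix obtained from $\nabla e_1, \ldots, \nabla e_{n-1}$ by deleting column $m$. The key computation is $\det(M_m) = \pm \prod_{\substack{a<b \\ a,b \ne m}}(x_a - x_b)$, the Vandermonde in $X_n \setminus \{x_m\}$. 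I would prove this using $e_{i-1}(X_n\setminus\{x_{m'}\}) = e_{i-1}(X_n\setminus\{x_{m'},x_m\}) + x_m\, e_{i-2}(X_n\setminus\{x_{m'},x_m\})$ for $m' \ne m$, which factors $M_m = (I + x_m N)C$, where $N$ is the strictly lower-triangular shift matrix (so $\det(I + x_m N) = 1$) and $C$ is precisely the Jacobian of the $n-1$ elementary symmetric polynomials in the $n-1$ variables $X_n \setminus \{x_m\}$; the identity from part (1), applied to these $n-1$ variables, then evaluates $\det M_m = \det C$ as the claimed Vandermonde.

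The main obstacle is the final step: showing the signed sum $\sum_{m=1}^{k}(-1)^{n+m}\det(M_m)$ is not identically zero, since a priori the Vandermonde terms could cancel. I would resolve this with a leading-term argument under the lexicographic order $x_1 > x_2 > \cdots > x_n$: the Vandermonde in $X_n \setminus \{x_m\}$ has leading monomial with a distinctive exponent pattern (a ``gap'' at $x_m$), and these leading monomials are pairwise distinct for $m = 1, \ldots, k$, using the hypothesis $k \le n-1$. Since the terms have distinct leading monomials and nonzero ($\pm 1$) coefficients, no cancellation can occur, so $\det J \neq 0$ and Lemma~\ref{lem:alg-indep-general} yields part (2). (The hypothesis $k \le n-1$ is essential, as $k = n$ gives $p_n = e_1$, which already lies among the $e_i$ and makes the set dependent.)
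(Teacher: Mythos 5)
Your proof is correct, but it follows a genuinely different route from the paper's. The paper treats part (1) as classical (citing a reference) and proves part (2) by contradiction with a field-theoretic argument: if $s = x_1 + \cdots + x_k$ were algebraic over $\mathbb{C}(e_1,\dots,e_{n-1})$, the coefficients of its minimal polynomial would be symmetric functions, so every sum of $k$ distinct variables would also be a root; the splitting field $F$ would then contain all differences $x_i - x_j$, hence each variable $x_i$ (via $kx_1 = (x_1-x_2)+\cdots+(x_1-x_k)+s$), hence $e_n = x_1\cdots x_n$, contradicting part (1). You instead run both parts through the Jacobian criterion (the paper's Lemma~\ref{lem:alg-indep-general}): part (1) via the classical Jacobian--Vandermonde identity, and part (2) by expanding $\det J$ along the row $\nabla p_k$, identifying each cofactor $\det M_m$ as the Vandermonde of $X_n\smallsetminus\{x_m\}$ through the clean factorization $M_m=(I+x_m N)C$, and excluding cancellation via pairwise-distinct lex-leading monomials --- which is exactly where the hypothesis $k\le n-1$ enters, since for $k=n$ the leading monomials of the $m=n-1$ and $m=n$ terms coincide (consistent with $p_n=e_1$ being redundant). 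Interestingly, both proofs bootstrap from part (1): the paper applies it in $n$ variables to land the contradiction, while you apply it in $n-1$ variables to evaluate each cofactor. The paper's argument is shorter and computation-free; yours is more self-contained (it proves part (1) rather than citing it) and yields an explicit formula, $\det J = \pm\sum_{m=1}^{k}(-1)^{m}\prod_{a<b,\ a,b\neq m}(x_a-x_b)$, a signed sum of Vandermondes of the same flavor as the determinants the paper computes in its singular-locus analysis (Theorem~\ref{thm:1-1-p-singular-locus}). The one step you assert rather than carry out --- that the constant relating the Jacobian of $(e_1,\dots,e_n)$ to the Vandermonde in part (1) is $\pm 1$ --- is a standard verification (extract the coefficient of $x_1^{n-1}x_2^{n-2}\cdots x_{n-1}$, or induct on $n$ using your own factorization trick), so it is a fixable detail, not a gap.
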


\begin{proof}
    Part~(1) is well known; for instance, see \cite{symmetric_fns}.

    Next, we prove Part~(2). By part (1), the set $\{ e_1, e_2, \dots, e_{n-1} \}$ is algebraically independent, so it suffices to show that the polynomial  $
s := x_1+x_2+\dots + x_k $
is transcendental (that is, not algebraic) over the field $\mathbb{C} (e_1, e_2, ..., e_{n-1})$.  

We proceed by contradiction. Assume that $s$ is algebraic over  $\mathbb{C} (e_1, e_2, ..., e_{n-1})$.  Basic results from field theory imply that $s$ has a minimal polynomial $f \in 
\mathbb{C} (e_1, e_2, ..., e_{n-1})[y]$ (so, $f(s)=0$). 
By construction, the coefficients of $f$ are in the field $\mathbb{C} (e_1, e_2, ..., e_{n-1})$ and so are symmetric functions (that is, they are fixed under permutations of the variables $x_1,x_2,\dots, x_n$).  It follows that the roots of $f$ also are fixed under such permutations.  
Hence, all permuted versions of $s$, that is, the sums of $k$ distinct variables $x_i$, are also roots of $f$.  
For instance, one such sum is $x_2 + x_3 + \dots + x_{k+1}$ (here, the assumption $k \leq n-1$ is used).  It follows that all such sums are in the splitting field of $f$ over $\mathbb{C} (e_1, e_2, ..., e_{n-1})$, which we denote by $F$ (and which is an algebraic extension of $\mathbb{C} (e_1, e_2, ..., e_{n-1})$).

Next, we claim that every difference $x_i - x_j$ (with $i \neq j$) is in $F$.
One instance of this claim can be seen as follows: 
$x_1 - x_{k+1} = (x_1+x_2+\dots + x_k) - (x_2 + x_3 + \dots + x_{k+1})$, and so this difference is the sum of two elements in $F$.  By symmetry, all other such differences are also in $F$.

Now we claim that each variable $x_i $ (for $1 \leq i \leq n$) is in $F$.
For instance, 
$kx_1= (x_1-x_2) + (x_1- x_3) + \dots + (x_1- x_k) + ( x_1 + x_2 + \dots +x_k)
$ shows that $k x_1$ is a sum of $k$ elements of $F$, which implies that $x_1$ is in $F$.  By symmetry, each variable $x_2,x_3,\dots, x_n$ is also in $F$.

We conclude that 
$x_1 x_2 \dots x_n$, that is, the elementary symmetric polynomial $e_n$, is in~$F$.  Thus, $e_n$ is algebraic over $\mathbb{C} (e_1, e_2, ..., e_{n-1})$, which contradicts part~(1) and ends the proof.    
\end{proof}

\section{Identifiability of directed-cycle models} \label{sec:cycle-models}

The aim of this section is to completely categorize identifiability in cycle models  (with any number of inputs, outputs, and leaks).  This aim is achieved in  Theorem~\ref{thm:main-cycle} below. 
This result states that, 
outside of one exceptional case (Definition~\ref{def:exceptional-model}),
identifiability in cycle models is entirely characterized by a ``leak-interlacing'' condition (Definition~\ref{def:interlacing}). 

\begin{definition}
\label{def:exceptional-model}
An $n$-compartment directed-cycle model 
$\mathcal{M} = (G, \In, \Out, \Leak)$ 
is an {\bf exceptional model}
if, 
for some $1 \leq i \leq n$, 
we have
$In = \{i\},$ $ \Out = \{i - 1\},$  
$\lvert \Leak \rvert =2$, 
and
$ i-1 \in \Leak$ (where $i-1$ is taken mod $n$).
\end{definition}

Two exceptional models are shown in Figure~\ref{fig:cycle-exceptional}.  Each has $n=3$ compartments and an input in the first compartment ($i=1$ in the notation of Definition~\ref{def:exceptional-model}).

\begin{figure}[ht]
\begin{center}
\begin{tikzpicture}[
roundnode/.style={circle, draw=black, very thick, minimum size=10mm},
arrowbasic/.style={very thick, ->},
]

\node[roundnode](middle){2};
\node[roundnode](leftcomp)  [left=of middle] {1};
\node[roundnode](rightcomp) [below=of middle] {3};

\draw[arrowbasic] (leftcomp.east) -- (middle.west) node[pos=.5, above] {\(k_{21}\)};
\draw[arrowbasic] (middle.south) -- (rightcomp.north) node[pos=.5, right] {\(k_{32}\)};

\draw[arrowbasic] (-2.05,1.35) -- node[right] {In}(leftcomp.north);
\draw(rightcomp) -- (1.5,-2.03);
\draw(1.6,-2.03) circle (0.1) node[right] {};
\draw[arrowbasic] (leftcomp.west) -- node[below] {\(k_{01}\)}(-3.5, 0);
\draw[arrowbasic]  (rightcomp.west) -- node[below] {\(k_{03}\)} (-1.6,-2.03);

\draw[arrowbasic] (rightcomp.north west) -- (leftcomp.south east) node[pos=.5, left] {\(k_{13}\)};
\end{tikzpicture}
\ \ 
\begin{tikzpicture}[
roundnode/.style={circle, draw=black, very thick, minimum size=10mm},
arrowbasic/.style={very thick, ->},
]

\node[roundnode](middle){2};
\node[roundnode](leftcomp)  [left=of middle] {1};
\node[roundnode](rightcomp) [below=of middle] {3};

\draw[arrowbasic] (leftcomp.east) -- (middle.west) node[pos=.5, above] {\(k_{21}\)};
\draw[arrowbasic] (middle.south) -- (rightcomp.north) node[pos=.5, right] {\(k_{32}\)};

\draw[arrowbasic] (-2.05,1.35) -- node[right] {In}(leftcomp.north);
\draw(rightcomp) -- (1.5,-2.03);
\draw(1.6,-2.03) circle (0.1) node[right] {};
\draw[arrowbasic] (middle.east) -- node[below] {\(k_{02}\)} (1.5, 0);
\draw[arrowbasic]  (rightcomp.west) -- node[below] {\(k_{03}\)} (-1.6,-2.03);

\draw[arrowbasic] (rightcomp.north west) -- (leftcomp.south east) node[pos=.5, left] {\(k_{13}\)};
\end{tikzpicture}
\end{center}
    \caption{Two exceptional models.}
    \label{fig:cycle-exceptional}
\end{figure}

\begin{remark} \label{rem:exceptional}
Exceptional models are so named, because they must be excluded from the ``leak-interlacing'' condition (Definition~\ref{def:interlacing} below) in order for the classification result (Theorem~\ref{thm:main-cycle} below) to hold.  Indeed, by part~(2) of Proposition~\ref{prop:cycle-1-in-1-out}, exceptional models are unidentifiable.
\end{remark}

The idea behind ``leak-interlacing'' in the
following definition is that between any two leaks, there must exist at least one input or one output. 

\begin{definition} \label{def:interlacing} 
    Let $n \geq 3$.  
    An $n$-compartment directed-cycle model $\mathcal{M}=(G, \In, \Out, \Leak)$
    is {\bf leak-interlacing} if it is \uline{not} an exceptional model and, additionally, 
    one of the following holds: 
    \begin{enumerate}
        \item $\lvert Leak \rvert \leq 1$, or
        \item  $\lvert Leak \rvert \geq 2$, with $Leak = \{\ell_1, \ell_2, ..., \ell_z \}$ where $1 \leq \ell_1 < \ell_2 < \dots < \ell_z \leq n$, and, additionally,
        for all $\alpha \in \{1,2,\dots,z \}$, 
        we have
         $\{\ell_{\alpha}+1, \ell_{\alpha}+2, \dots, \ell_{\alpha+1} \} \cap (\In \cup \Out) \neq \varnothing$.
         (Here, $\ell_{z+1}:=\ell_1$, and the compartments $\ell_{z}{+}1, \ell_{z}{+}2,\dots, \ell_1$ are taken mod $n$.) 
    \end{enumerate}
\end{definition}

\begin{remark} \label{rem:path}
Part~(2) of Definition~\ref{def:interlacing} can be restated as follows:
we require that $\lvert Leak \rvert \geq 2$ and that 
        the 
        directed path
        in $G$ from $(\ell_{\alpha}{+}1)$ to $\ell_{\alpha +1}$
        contains at least one input or output, where ``path'' here is generalized to allow for paths of length $0$ 
        (such a length-$0$ path arises when $\ell_{\alpha}{+}1  = \ell_{\alpha +1}$).
\end{remark}

\begin{example}[Example~\ref{ex:jacobian-matrix}, continued] \label{ex:leak-interlacing}
Recall that the directed-cycle model in Figure~\ref{fig:cycle-running-example} has $3$ compartments, with $In=\{1\}$, $Out=\{2\}$, and $Leak = \{1, 3\}$.
This model is leak-interlacing; indeed, the path in the cycle from compartment-$2$ (here, $2=1+1$) to compartment-$3$ contains the output (at compartment-$2$), and the path 
(in the generalized sense to allow for length-0 paths, as in Remark~\ref{rem:path})
from $3+1(\mathrm{mod}~3)=1$ to $1$ contains the input (at $1$). The next theorem, which is proven later in this section, recapitulates what we saw earlier: this model is generically locally
 identifiable. 
\end{example}

\begin{theorem}[Identifiability of directed-cycle models] \label{thm:main-cycle}
A directed-cycle model is generically locally identifiable if and only if it is leak-interlacing. 
\end{theorem}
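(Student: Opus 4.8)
The plan is to turn identifiability into an algebraic-independence question via Proposition~\ref{prop:rank-matrix-for-identifiability} and Lemma~\ref{lem:alg-indep-general}, and then reduce it to a concrete spanning condition matching Definition~\ref{def:interlacing}. First I would compute the coefficient map explicitly. Writing $k_i$ for the rate of the edge $i \to i{+}1$ (indices mod $n$) and $d_i := k_i + k_{0i}$ (with $k_{0i}=0$ when $i\notin\Leak$), the only permutations contributing to $\det(\partial I - A)$ are the identity and the full $n$-cycle, so the left-hand side of~\eqref{eq:in-out} equals $\prod_{i=1}^n(\partial+d_i) - \prod_{i=1}^n k_i$, with coefficients $e_1(d),\dots,e_{n-1}(d)$ and the constant $\prod_i d_i - \prod_i k_i$. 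A cofactor computation shows that for a single input $j$ and output $i$ the right-hand side is $\pm(\prod_e k_e)\prod_{m\in R(i,j)}(\partial+d_m)$, where the first product runs over the edges of the directed path from $j$ to $i$ and $R(i,j)$ consists of the compartments strictly between $i$ and $j$; by Proposition~\ref{prop:coeff-many-in-or-out} this produces every coefficient even when there are several inputs or outputs. I then change to coordinates $(d_1,\dots,d_n,\{k_\ell\}_{\ell\in\Leak})$, an invertible unipotent substitution that preserves Jacobian rank and algebraic independence.

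The central structural observation is that, in these coordinates, $e_1(d),\dots,e_{n-1}(d)$ depend only on the $d_i$, while each leak rate $k_\ell$ enters the coefficients only through products: the total product $\prod_\ell k_\ell$ (recovered from the constant term) and the arc products $\prod_{\ell\in S}k_\ell$ with $S = \Leak\cap[j,i)$ (the leading right-hand coefficients). Assuming some input/output pair has $R(i,j)$ nonempty and proper, I would select $n+z$ coefficient functions, where $z := |\Leak|$: the $n$ functions $e_1(d),\dots,e_{n-1}(d)$ and one partial sum $\sum_{m\in R}d_m$ (obtained as a ratio of two right-hand coefficients using Lemma~\ref{lem:modify-coeffs-by-division}), which are algebraically independent by Lemma~\ref{lem:alg-indep} and involve only the $d_i$; together with $z$ functions assembled from the arc products and the constant term. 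Because the first $n$ do not involve the $k_\ell$, the Jacobian of this selection is block lower-triangular, and after factoring out monomials its lower-right $z\times z$ block becomes the incidence matrix whose rows are the indicator vectors $\mathbf 1_S$ of the achievable leak-subsets together with the all-ones vector. So identifiability is equivalent to these vectors spanning $\R^z$.

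It then remains to show that this spanning holds exactly when the model is leak-interlacing. The achievable leak-subsets are precisely the cyclic intervals $\{\ell_a,\dots,\ell_b\}$ such that some input lies in the gap immediately before $\ell_a$ and some output in the gap ending at $\ell_b$. For the forward direction I take a vector $w$ orthogonal to all these indicators and to the all-ones vector, and pass to its partial sums $W_\gamma$; the interval relations force $W_p = W_q$ for every input-boundary $p$ and output-boundary $q$, interlacing guarantees that every boundary is of one of these two types, and $\sum_\gamma w_\gamma = 0$ then collapses all $W_\gamma$ to $0$, so $w=0$. Conversely, if some gap has no input or output, then the vector supported with opposite signs on the two leaks bounding that gap is orthogonal to the all-ones vector and to every achievable interval — each such interval contains both of these leaks or neither — so the indicators cannot span.

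To assemble: a leak-interlacing model with $z\ge2$ (outside the degenerate aligned single-input/single-output case) has a nonsingular $(n+z)\times(n+z)$ Jacobian by the above, hence is identifiable by Proposition~\ref{prop:rank-matrix-for-identifiability}. A non-leak-interlacing model is either exceptional, hence unidentifiable by Proposition~\ref{prop:cycle-1-in-1-out}(2), or has a gap containing no input or output, in which case I exhibit an explicit one-parameter family — inversely scaling the two edge rates leaving the leaks that bound the bad gap while adjusting their leak parameters to fix the two $d$-values — along which the both-or-neither property from the previous paragraph keeps every coefficient constant, so the map is not locally injective. The leftover cases are direct: $|\Leak|\le1$ is always leak-interlacing and identifiable by Proposition~\ref{prop:cycle-1-in-1-out}(1), and the aligned single-input/single-output models fall under Proposition~\ref{prop:cycle-1-in-1-out}(2), where leak-interlacing holds iff $|\Leak|\le1$. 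The hardest part is the spanning dichotomy of the third paragraph: correctly describing the achievable subsets as cyclic intervals cut by inputs and outputs, proving the dichotomy with the cyclic boundary bookkeeping, and seeing precisely why the total-product vector is indispensable and why the exceptional models must be removed by hand.
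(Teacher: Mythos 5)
Your proposal is correct, and it reaches Theorem~\ref{thm:main-cycle} by a genuinely different route than the paper, even though both arguments rest on the same foundations (Proposition~\ref{prop:rank-matrix-for-identifiability}, Proposition~\ref{prop:coeff-many-in-or-out}, the coefficient formula of Lemma~\ref{lem:coefficient-map-cycle}, Lemmas~\ref{lem:modify-coeffs-by-division}, \ref{lem:alg-indep-general}, \ref{lem:alg-indep}, and Proposition~\ref{prop:cycle-1-in-1-out} for the $\lvert\Leak\rvert\le 1$ and exceptional/aligned cases). For the unidentifiable direction, the paper proves a linear dependence among four Jacobian columns (Lemma~\ref{lem:unIdentifiable-before-output}, split into two cases according to whether the two leaks sit before or after the output, then transported to many-input/output models by relabeling in Proposition~\ref{prop:non-leak-interlacing-summary}); your one-parameter scaling family is exactly the integrated version of that same dependence, but your ``both-or-neither'' verification is uniform over all input--output pairs, so you avoid both the case split and the relabeling reduction. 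The real divergence is in the identifiable direction: the paper (Proposition~\ref{prop:leak-interlacing}) first passes to a \emph{minimally} leak-interlacing submodel (Lemma~\ref{lem:minimal}, justified by Proposition~\ref{prop:add-in-out}), relabels, and then orders rows and columns along the interlacing pattern~\eqref{eq:interlace-proof} so that the leak block $D$ becomes lower triangular, with a separate Case~2 (and two subcases) for models in the exceptional family; you instead keep \emph{all} achievable arc products, reduce the rank of the leak block to the statement that the indicator vectors of the cyclic leak-intervals $\Leak\cap\{j,\dots,i-1\}$ together with the all-ones vector span $\R^{z}$, and prove the spanning dichotomy by the partial-sum/orthogonality argument. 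This handles the exceptional family with no extra case analysis and makes the equivalence ``interlacing $\Leftrightarrow$ full rank'' conceptually transparent, whereas the paper's explicit triangular matrices have the side benefit of producing concrete determinant information that is reused for the singular-locus results in Section~\ref{sec:SL}.

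Two small points of precision, neither of which is a gap. First, in your interval description the output attached to an achievable set $\{\ell_a,\dots,\ell_b\}$ lies in the gap \emph{beginning immediately after} $\ell_b$ (that is, $\{\ell_b+1,\dots,\ell_{b+1}\}$), not in the gap ending at $\ell_b$; your subsequent bookkeeping uses the correct convention, so only the wording needs fixing. Second, the sentence ``identifiability is equivalent to these vectors spanning $\R^{z}$'' is, at that point of your argument, only proved in the direction (spanning $\Rightarrow$ identifiable), since non-spanning must be shown to annihilate the \emph{full} Jacobian and not merely your selected $n+z$ rows; your final paragraph supplies exactly this via the scaling family (every coefficient is of Type I--IV for some pair, and each is constant along the family), so the assembled proof is complete --- but the equivalence claim should be stated after, not before, that verification.
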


\begin{remark} \label{rem:connect-to-GOS}
Theorem~\ref{thm:main-cycle} significantly extends results of 
Gerberding, Obatake, and Shiu~\cite[\S3]{GOS}; these results were largely summarized earlier in Proposition~\ref{prop:cycle-1-in-1-out}.
In fact, our work is motivated by theirs, and indeed they posed the problem of classifying identifiability in directed-cycle models with two or more leaks~\cite[\S4]{GOS}. 
\end{remark}

\begin{example} \label{ex:not-leak-interlacing}
Consider the $4$-compartment directed-cycle model $\mathcal{M}=
(G, \In, \Out, \Leak)$ with 
$\In = \{1\}$, $\Out = \{3\}$, and $\Leak = \{1, 2\}$, as shown in Figure~\ref{fig:cycle-exceptional}. 
There is no input or output along the (generalized) path from $1+1=2$ to $2$ (i.e., compartment-$2$ is neither an input nor an output), so $\mathcal{M}$ is not leak-interlacing.  Thus, Theorem~\ref{thm:main-cycle} implies that this model is unidentifiable.

\begin{figure}[ht]
\begin{center}
\begin{tikzpicture}[
roundnode/.style={circle, draw=black, very thick, minimum size=10mm},
arrowbasic/.style={very thick, ->},
]

\node[roundnode](comp1){1};
\node[roundnode](comp2) [right=of comp1] {2};
\node[roundnode](comp3) [below=of comp2] {3};
\node[roundnode](comp4) [left=of comp3] {4};

\draw[arrowbasic] (comp1) -- (comp2) node[pos=.5, above] {\(k_{21}\)};
\draw[arrowbasic] (comp2) -- (comp3) node[pos=.5, right] {\(k_{32}\)};
\draw[arrowbasic] (comp3) -- (comp4) node[pos=.5, above] {\(k_{43}\)};
\draw[arrowbasic] (comp4) -- (comp1) node[pos=.5, left] {\(k_{14}\)};

\draw[arrowbasic] (0,1.35) -- node[right] {In} (comp1);

\draw(comp3) -- (3.5,-2.03);
\draw(3.6,-2.03) circle (0.1) node[right] {};

\draw[arrowbasic](comp1) -- node[below] {\(k_{01}\)} (-1.6,0);
\draw[arrowbasic](comp2) -- node[below] {\(k_{02}\)} (3.5,0);
\end{tikzpicture}
\end{center}
    \caption{A directed-cycle model that is not leak-interlacing.}
    \label{fig:cycle-4-leak-interlacing}
\end{figure}
\end{example}

An immediate consequence of Theorem~\ref{thm:main-cycle} concerns directed-cycle models with 
too many leaks (cf.~\cite[Theorem~6.1]{BM-2022}).
\begin{corollary}[Directed-cycle models with too many leaks]
     \label{cor:cycle-3plus-leaks}
    If $\mathcal{M}= (G, \In, \Out, \Leak)$ is a directed-cycle model with 
    $\lvert \Leak \rvert \geq \lvert \In \rvert + \lvert \Out \rvert + 1$,
    then  $\mathcal{M}$ is unidentifiable.
    In particular, every directed-cycle model with 
    exactly $1$ input, exactly $1$ output, and $3$ or more leaks is unidentifiable.
\end{corollary}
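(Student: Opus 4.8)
The plan is to deduce the corollary directly from Theorem~\ref{thm:main-cycle}: I will show that the hypothesis $\lvert \Leak \rvert \geq \lvert \In \rvert + \lvert \Out \rvert + 1$ forces $\mathcal{M}$ to fail the leak-interlacing condition of Definition~\ref{def:interlacing}, and then Theorem~\ref{thm:main-cycle} immediately yields that $\mathcal{M}$ is unidentifiable. So the entire argument is a short combinatorial (pigeonhole) count; there is no genuinely hard step, and the only point requiring care is verifying disjointness of the relevant intervals.

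First I would record that by Assumption~\ref{assumption} we have $\lvert \In \rvert \geq 1$ and $\lvert \Out \rvert \geq 1$, so the hypothesis gives $\lvert \Leak \rvert \geq 3$. In particular $\lvert \Leak \rvert \geq 2$, so part~(1) of Definition~\ref{def:interlacing} cannot hold, and $\mathcal{M}$ is certainly not an exceptional model (those have exactly two leaks, by Definition~\ref{def:exceptional-model}). Thus it remains only to rule out part~(2) of Definition~\ref{def:interlacing}.

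Next I would write $\Leak = \{\ell_1, \ell_2, \dots, \ell_z\}$ with $1 \leq \ell_1 < \ell_2 < \dots < \ell_z \leq n$ and $z = \lvert \Leak \rvert$. The key observation is that the $z$ cyclic intervals $I_{\al} := \{\ell_{\al} + 1, \ell_{\al} + 2, \dots, \ell_{\al+1}\}$ (indices taken mod $n$, with $\ell_{z+1} := \ell_1$) are pairwise disjoint and partition the compartment set $\{1, 2, \dots, n\}$; indeed, each $I_{\al}$ contains exactly one leak, namely $\ell_{\al+1}$ at its right endpoint. Part~(2) of Definition~\ref{def:interlacing} demands that each of these $z$ disjoint intervals meet $\In \cup \Out$. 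Since a single input or output compartment lies in exactly one of the intervals, covering all $z$ of them requires at least $z$ distinct elements of $\In \cup \Out$; that is, $\lvert \In \cup \Out \rvert \geq z = \lvert \Leak \rvert$.

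Finally I would combine this with the trivial bound $\lvert \In \cup \Out \rvert \leq \lvert \In \rvert + \lvert \Out \rvert$. The hypothesis then gives $\lvert \In \cup \Out \rvert \leq \lvert \In \rvert + \lvert \Out \rvert < \lvert \In \rvert + \lvert \Out \rvert + 1 \leq \lvert \Leak \rvert$, contradicting $\lvert \In \cup \Out \rvert \geq \lvert \Leak \rvert$. Hence part~(2) of Definition~\ref{def:interlacing} fails as well, so $\mathcal{M}$ is not leak-interlacing and is therefore unidentifiable by Theorem~\ref{thm:main-cycle}. The ``in particular'' assertion is the special case $\lvert \In \rvert = \lvert \Out \rvert = 1$, for which $\lvert \In \rvert + \lvert \Out \rvert + 1 = 3$, so that having three or more leaks triggers the hypothesis. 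The main (and only) thing to watch is the verification that the $I_{\al}$ partition $\{1,\dots,n\}$, which is what makes the pigeonhole count valid.
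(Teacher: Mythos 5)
Your proof is correct and matches the paper's (implicit) argument: the paper states Corollary~\ref{cor:cycle-3plus-leaks} as an immediate consequence of Theorem~\ref{thm:main-cycle}, and your pigeonhole count on the disjoint cyclic intervals between consecutive leaks is exactly the intended way to see that the leak-interlacing condition of Definition~\ref{def:interlacing} must fail. The details you supply (the intervals partition $[n]$, so part~(2) would force $\lvert \In \cup \Out \rvert \geq \lvert \Leak \rvert$, contradicting the hypothesis) are accurate.
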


The remainder of this section is dedicated to proving Theorem~\ref{thm:main-cycle}, as follows. Subsection~\ref{sec:cycle-prior-results} contains results on the coefficient map of directed-cycle models, which we use in the next subsections.  
In 
Subsection~\ref{sec:cycle-non-leak-interlacing}, 
we prove one implication of 
Theorem~\ref{thm:main-cycle} 
(Proposition~\ref{prop:non-leak-interlacing-summary} states that non-leak-interlacing models are unidentifiable);
while 
Subsection~\ref{sec:cycle-leak-interlacing} 
presents the reverse implication  
(Proposition~\ref{prop:leak-interlacing} states that leak-interlacing models are generically locally
identifiable).

\subsection{Coefficient maps of directed-cycle models}  \label{sec:cycle-prior-results}
This subsection presents useful results on the coefficient maps of directed-cycle models. 
We begin with a formula for the coefficient map of models with only one input and one output (and at least one leak), which is due to Gerberding, Obatake, and Shiu~\cite[Proposition~3.8]{GOS}:

\begin{lemma}[Coefficient map for $1$ input and $1$ output, \cite{GOS}] \label{lem:coefficient-map-cycle}
Let $n \geq 3$. 
Consider an $n$-compartment directed-cycle model 
$\mathcal{M}=
(G, \In, \Out, \Leak)$, with $In = \{1\}$, $Out = \{p\}$ (for some $1 \leq p \leq n$), and $Leak = \{\ell_1, \ell_2, \ldots , \ell_z\} \neq \varnothing$. 
Let  
$\ds\kappa := \prod_{i=2}^p k_{i,i-1}$ (if $p=1$, then $\ds \kappa := 1$), 
and let $k_{n+1,n}:= k_{1n}$. 
Then the coefficient map of $\mathcal{M}$ is the map 
$\msc: \R^{n+z} \rightarrow \R^{2n - p + 1}$ defined by:
{\small
\begin{align} \label{eq:coeff-map-cycle}   
(k_{21}, k_{32},\ldots, k_{1n},~ k_{0\ell_1}, k_{0\ell_2}, \ldots, k_{0\ell_z}) \mapsto 
    (e_1, e_2, \ldots, e_{n-1}, ~ 
    e_n-\displaystyle{\prod_{i=1}^{n} k_{i+1, i}},~ \kappa,~ 
    e_1^*\kappa,
    e_2^*\kappa,
    \ldots e_{n-p}^*\kappa)~,
\end{align}
}%
\noindent 
where $e_j$ and $e_j^*$ denote the $j^{th}$ elementary symmetric polynomial on the sets $E = \{k_{i+1,i} \ | \ i \notin Leak \} \ \cup \ \{k_{i+1,i} + k_{0i}\ | \ i \in  Leak \}$ and 
$E^* = \{k_{i+1,i} \ | \ p +1 \leq i \leq n, \ i \notin Leak \} \ \cup \ \{k_{i+1,i} + k_{0i}\ | \ p +1 \leq i \leq n, \ i \in  Leak \}$, respectively.  (If $p=n$, the set $E^*$ is empty.)
\end{lemma}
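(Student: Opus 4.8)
The plan is to apply the input-output equation formula of Proposition~\ref{prop:in/out} directly, with output index $i = p$ and the single input index $j = 1$, and then to read off the coefficients of the derivatives $y_p^{(d)}$ on the left and $u_1^{(d)}$ on the right. The starting point is the structure of $M := \partial I - A$. For a directed cycle, $M$ is \emph{cyclic bidiagonal}: its $(i,i)$ entry is $\partial + \delta_i$, where $\delta_i := k_{i+1,i} + k_{0i}$ if $i \in \Leak$ and $\delta_i := k_{i+1,i}$ otherwise (so the $\delta_i$ are precisely the elements of the set $E$, and $e_j = e_j(E)$, $e_j^* = e_j(E^*)$ with $E^* = \{\delta_i : p+1 \le i \le n\}$); its subdiagonal entry in row $i$ is $M_{i,i-1} = -k_{i,i-1}$ (for $2 \le i \le n$); its single corner entry is $M_{1,n} = -k_{1n}$; and all remaining entries are $0$. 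Everything then reduces to evaluating $\det M$ and the minor $\det(M^{1,p})$ of this structured matrix.

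For the left-hand side, I would expand $\det M$ by cofactors along the first row, whose only nonzero entries are $\partial + \delta_1$ (column $1$) and $-k_{1n}$ (column $n$). The first term is $(\partial + \delta_1)$ times the determinant of a triangular (lower bidiagonal) matrix, giving $\prod_{i=1}^n(\partial + \delta_i)$; the second involves the determinant of the matrix on rows $\{2,\dots,n\}$ and columns $\{1,\dots,n-1\}$, which is again triangular and equals $\prod_{i=2}^n(-k_{i,i-1})$. Combining these with the cofactor signs, the cross-term collapses to $-\prod_{i=1}^n k_{i+1,i}$, so that
\begin{equation*}
\det M ~=~ \prod_{i=1}^n(\partial + \delta_i) ~-~ \prod_{i=1}^n k_{i+1,i}~.
\end{equation*}
Writing $\prod_{i=1}^n(\partial + \delta_i) = \sum_{j=0}^n e_j\,\partial^{\,n-j}$ identifies the coefficient of $y_p^{(n-j)}$ as $e_j$ for $1 \le j \le n-1$ and as $e_n - \prod_{i=1}^n k_{i+1,i}$ for $j = n$, while the leading coefficient (of $y_p^{(n)}$) is $1$ and is discarded as a constant. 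This yields the first $n$ entries of the claimed coefficient map.

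For the right-hand side, the key observation is that deleting row $1$ removes the \emph{only} corner entry, so every surviving row $i \in \{2,\dots,n\}$ of $M^{1,p}$ is nonzero only in columns $i$ and $i-1$. Hence $M_{ij} \neq 0$ forces $j \in \{i-1,i\}$, and this makes $M^{1,p}$ block diagonal for the row partition $\{2,\dots,p\} \sqcup \{p+1,\dots,n\}$ and column partition $\{1,\dots,p-1\} \sqcup \{p+1,\dots,n\}$: both off-diagonal blocks vanish, because no index $i$ on one side of the split at column $p$ can reach a column $j \in \{i-1,i\}$ on the other side. Each block is triangular after the evident relabeling: the upper-left block has the parameters $-k_{i,i-1}$ on its diagonal, with determinant $\prod_{i=2}^p(-k_{i,i-1}) = (-1)^{p-1}\kappa$, and the lower-right block has diagonal $\partial + \delta_i$, with determinant $\prod_{i=p+1}^n(\partial + \delta_i)$. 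Multiplying by the sign $(-1)^{p+1}$ from Proposition~\ref{prop:in/out} gives
\begin{equation*}
(-1)^{p+1}\det(M^{1,p}) ~=~ (-1)^{p+1}(-1)^{p-1}\,\kappa\prod_{i=p+1}^n(\partial + \delta_i) ~=~ \kappa\prod_{i=p+1}^n(\partial + \delta_i)~,
\end{equation*}
and expanding $\prod_{i=p+1}^n(\partial + \delta_i) = \sum_{j=0}^{n-p} e_j^*\,\partial^{\,n-p-j}$ reads off the coefficient of $u_1^{(n-p-j)}$ as $e_j^*\kappa$ (with $e_0^* = 1$), producing the remaining $n-p+1$ entries. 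The two degenerate cases fall out of the same argument: when $p = n$ the lower-right block is empty and only $\kappa$ survives, and when $p = 1$ the upper-left block is empty, consistent with the convention $\kappa = 1$.

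I expect the minor computation, together with the accompanying sign bookkeeping, to be the main obstacle. The crucial simplification is that removing row $1$ destroys the cyclic wrap-around, turning $M^{1,p}$ into a (block) triangular matrix, so that each determinant collapses to a single product of diagonal entries and the signs $(-1)^{p+1}$ and $(-1)^{p-1}$ cancel to give the advertised positive coefficients. A final count confirms that the map has $n + (n-p+1) = 2n-p+1$ coordinates, matching the stated codomain $\R^{2n-p+1}$.
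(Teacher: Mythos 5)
Your proposal is correct, but note that the paper itself offers no proof of this lemma to compare against: it is quoted from Gerberding--Obatake--Shiu \cite[Proposition~3.8]{GOS}. What you have written is therefore a self-contained derivation from Proposition~\ref{prop:in/out}, and it checks out. The compartmental matrix of the directed cycle has exactly the cyclic bidiagonal form you describe (diagonal entries $\partial+\delta_i$, subdiagonal entries $-k_{i,i-1}$, corner entry $-k_{1n}$); the cofactor expansion along the first row correctly yields $\det M = \prod_{i=1}^n(\partial+\delta_i) - \prod_{i=1}^n k_{i+1,i}$, since the cofactor sign $(-1)^{1+n}$ combines with $\prod_{i=2}^n(-k_{i,i-1}) = (-1)^{n-1}\prod_{i=2}^n k_{i,i-1}$ and the entry $-k_{1n}$ to give exactly $-\prod_{i=1}^n k_{i+1,i}$. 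Your key structural observation --- that deleting row $1$ removes the unique corner entry, so that $M^{1,p}$ is genuinely block diagonal with one upper-triangular block of determinant $(-1)^{p-1}\kappa$ and one lower-triangular block of determinant $\prod_{i=p+1}^n(\partial+\delta_i)$ --- is the right mechanism, and the signs $(-1)^{p+1}(-1)^{p-1}=1$ cancel as claimed. The degenerate cases $p=1$ (empty first block, consistent with $\kappa=1$) and $p=n$ (empty second block, $E^*=\varnothing$) fall out correctly, and the count $n+(n-p+1)=2n-p+1$ matches the stated codomain. As a sanity check, your formula reproduces the worked computation in Example~\ref{ex:3-cycle-again}: there $p=2$, $\kappa=k_{21}$, $E^*=\{k_{13}+k_{03}\}$, and the right-hand side is $k_{21}u_1^{(1)}+(k_{03}+k_{13})k_{21}u_1$, exactly as the lemma predicts.
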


\begin{example}[Example~\ref{ex:leak-interlacing}, continued] \label{ex:coeff-map}
For the model in Figure~\ref{fig:cycle-running-example}, the coefficient map was shown earlier in~\eqref{eq:coeff-map-for-main-ex}, and it matches the description in~\eqref{eq:coeff-map-cycle}.
\end{example}

\begin{remark} \label{rem:notation}
Lemma~\ref{lem:coefficient-map-cycle} 
uses the notation $\kappa$ and $e^*_1$, which -- in the more general setting of the next result
(Proposition~\ref{prop:specific-coefficients}) --
will correspond to the terms $\kappa(1, p)$ and $e^*_1(1, p)$.
\end{remark}

\begin{notation} \label{notation:types-of-coefficients}
    We label coefficients in the coefficient map~\eqref{eq:coeff-map-cycle} in Lemma~\ref{lem:coefficient-map-cycle}, as follows:
    \begin{itemize}
        \item Type I: $e_1, e_2, \ldots, e_{n-1}$
        
        \item Type II: $e_n-\displaystyle{\prod_{i=1}^{n} k_{i+1, i}}$
        \item Type III: $\kappa$
        \item Type IV: $    e_1^*\kappa,
    e_2^*\kappa,
    \ldots e_{n-p}^*\kappa$ (if $p=n$, these Type~IV coefficients do not exist).
    \end{itemize}
\end{notation}

The next result uses Lemma~\ref{lem:coefficient-map-cycle} to describe some\footnote{Proposition~\ref{prop:specific-coefficients}(3) lists $e^{*}_1(i,j)$, but not the ``higher'' terms $e^{*}_2(i,j), e^{*}_3(i,j),$ and so on, because these higher terms need not exist and, moreover, only the term $e^{*}_1(i,j)$ is needed for later proofs.} coefficients of cycle models that may have more than one input and/or output.  

\begin{proposition}[Coefficients] \label{prop:specific-coefficients}
Assume $ n \geq 3$.
Consider an $n$-compartment directed-cycle model 
$\mathcal{M}=
(G, \In, \Out, \Leak)$, 
with 
$\In \neq \varnothing$, 
$\Out \neq \varnothing$, and
$\Leak \neq \varnothing$. 

Let $k_{n+1,n}:= k_{1n}$.          
    \begin{enumerate}
        \item The coefficient map of $\mathcal{M}$ contains the coefficients 
        $e_1, e_2, \ldots, e_{n-1},~
        e_n-\displaystyle{\prod_{i=1}^{n} k_{i+1, i}}
        $, where $e_j$ denotes the $j^{th}$ elementary symmetric polynomial on the set $E = \{k_{i+1,i} \ | \ i \notin Leak \} \ \cup \ \{k_{i+1,i} + k_{0i}\ | \ i \in  Leak \}$.
        \item For all $i \in \In$ and $j \in \Out$, the coefficient map of $\mathcal{M}$ contains the following coefficient,
                        which (if $i \neq j$) is the product of edge-parameters along the path from input $i$ to output $j$:
                \begin{align} \label{eq:product-i-to-j}
                         \kappa(i,j) \quad := \quad
                         \begin{cases}
                             k_{i+1,i} k_{i+2,i+1} \cdots k_{j,j-1} \quad & \mathrm{if}~ i \neq j\\
                             1 & \mathrm{if}~ i = j~,
                         \end{cases}
                \end{align}
                where the indices are taken mod $n$ (that is, $k_{n+1,n}:=k_{1n}$, $k_{n+2,n+1}:=k_{21}$, etc.).
    \item For all $i \in \In$ and $j \in \Out$ with $j \neq i-1$ (mod $n$), the coefficient map of $\mathcal{M}$ contains the coefficient $e^*_1(i,j) \kappa(i,j)$, 
                where: 
                \begin{align*}  
                e^*_1(i,j) ~ := ~ 
                \left(
                    \displaystyle\sum_{q \in \{j+1,j+2,\dots, i-1\} \smallsetminus \Leak}
                    k_{q+1,q}
                \right)
                +
                \left(
                    \displaystyle\sum_{q \in \{j+1,j+2,\dots, i-1\} \cap \Leak}
                    k_{q+1,q} + k_{0q}
                \right)      
                \end{align*}
                where the indices $j+1, j+2,\dots, i-1$ are taken mod $n$.
        \end{enumerate}
\end{proposition}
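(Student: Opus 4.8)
The plan is to reduce every coefficient listed in parts (1)--(3) to a coefficient of a suitable \emph{one-input, one-output} directed-cycle model, and then read it off from the explicit formula of Lemma~\ref{lem:coefficient-map-cycle}. The bridge is Proposition~\ref{prop:coeff-many-in-or-out}: its first part transfers the coefficients of the $y$-derivatives on the left-hand side of the input-output equation, and its second part transfers, for each fixed $i \in \In$ and $j \in \Out$, the coefficients of $u_i^{(d)}$ to the one-input, one-output model $(G, \{i\}, \{j\}, \Leak)$. Thus all three parts become statements about a single output and a single input, where Lemma~\ref{lem:coefficient-map-cycle} applies.

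\textbf{Part (1).} The coefficients $e_1, \dots, e_{n-1}, e_n - \prod_{i=1}^n k_{i+1,i}$ are exactly the coefficients of the $y$-derivatives, that is, of $\det(\partial I - A)$. Since the compartmental matrix $A$ depends only on the edges and leaks, this determinant is independent of the input/output placement, so I would compute it directly. The matrix $\partial I - A$ is a lower-bidiagonal-plus-corner cyclic matrix whose diagonal entries are $d_i := \partial + k_{i+1,i} + k_{0i}$ (for $i \in \Leak$) or $\partial + k_{i+1,i}$ (for $i \notin \Leak$), with off-diagonal entries $-k_{i+1,i}$ in positions $(i{+}1,i)$ and $-k_{1n}$ in the corner $(1,n)$. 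Expanding via the two surviving permutations (the identity and the full $n$-cycle) gives $\det(\partial I - A) = \prod_{i=1}^n d_i - \prod_{i=1}^n k_{i+1,i}$, where the $n$-cycle contributes sign $(-1)^{n-1}$ times the entry product $(-1)^n \prod k_{i+1,i}$, i.e. exactly $-\prod k_{i+1,i}$. Writing $\prod_i d_i = \prod_i(\partial + (\text{element of } E))$ and expanding in powers of $\partial$ produces the elementary symmetric polynomials $e_j(E)$ as the coefficients, and subtracting $\prod k_{i+1,i}$ only alters the $\partial^0$ term. By Proposition~\ref{prop:coeff-many-in-or-out}(1), these are coefficients of the input-output equation for any output, hence lie in the coefficient map of $\mathcal M$.

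\textbf{Parts (2) and (3).} Fix $i \in \In$ and $j \in \Out$. By Proposition~\ref{prop:coeff-many-in-or-out}(2), the coefficients of $u_i^{(d)}$ in the input-output equation for output $j$ agree with those of $(G, \{i\}, \{j\}, \Leak)$. To apply Lemma~\ref{lem:coefficient-map-cycle}, whose single input must sit in compartment $1$, I would cyclically relabel the compartments by $r \mapsto (r - i + 1 \bmod n)$, so that the input moves to compartment $1$ and the output to compartment $p := (j - i + 1 \bmod n)$. Under this relabeling the Type~III coefficient $\kappa = \prod_{r=2}^{p} k_{r,r-1}$ becomes the product of edge parameters along the directed path from $i$ to $j$, which is exactly $\kappa(i,j)$ in~\eqref{eq:product-i-to-j} (and equals $1$ when $i = j$, i.e. $p=1$); this gives part~(2). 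For part~(3), the hypothesis $j \neq i-1 \pmod n$ is precisely the condition $p \neq n$, so the set $E^*$ is nonempty and the first Type~IV coefficient $e_1^* \kappa$ exists. Since $e_1^*$ is the sum of the elements of $E^*$, and since the index range $p+1 \le r \le n$ of $E^*$ pulls back through the relabeling to the original compartments $\{j+1, j+2, \dots, i-1\}$ (mod $n$), this coefficient is exactly $e_1^*(i,j)\,\kappa(i,j)$.

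\textbf{Main obstacle.} The only real work is the bookkeeping of the cyclic relabeling: I must verify that $r \mapsto (r-i+1 \bmod n)$ is a bijection carrying edges to edges and leaks to leaks, that it sends $i \mapsto 1$ and $j \mapsto p$, that the range $\{p+1,\dots,n\}$ in $E^*$ pulls back to $\{j+1,\dots,i-1\}$, and that $p \neq n$ is equivalent to $j \neq i-1 \pmod n$. None of these steps is deep, but the modular arithmetic must be tracked carefully so that the translated expressions coincide with the stated formulas for $\kappa(i,j)$ and $e_1^*(i,j)$.
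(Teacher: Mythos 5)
Your proposal is correct and follows essentially the same route as the paper's proof: reduce to the one-input, one-output models $(G,\{i\},\{j\},\Leak)$ via Proposition~\ref{prop:coeff-many-in-or-out}, cyclically relabel so the input sits in compartment~$1$, read off the Type I--IV coefficients from Lemma~\ref{lem:coefficient-map-cycle}, and undo the relabeling (including the check that $p \neq n$ corresponds to $j \neq i-1 \pmod n$). The only (harmless) divergence is that for part~(1) you verify the formula for $\det(\partial I - A)$ by a direct permutation expansion of the cyclic matrix rather than quoting Lemma~\ref{lem:coefficient-map-cycle}, which simply re-derives a piece of that lemma.
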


\begin{proof}
Let $\mathcal{M}=
(G, \In, \Out, \Leak)$ be a directed-cycle model with 
$\In \neq \varnothing$, 
$\Out \neq \varnothing$, and
$\Leak \neq \varnothing$. 
Let $i \in \In$ and $j \in \Out$, and consider the one-input, one-output model $\mathcal{M}(i,j):=(G, \{i\}, \{j\}, Leak)$.  
By Proposition~\ref{prop:coeff-many-in-or-out}, every coefficient of the coefficient map of 
$\mathcal{M}(i,j)$ is also a coefficient of $\mathcal{M}$.  So, it suffices to prove that the coefficients in (1)--(3) in Proposition~\ref{prop:specific-coefficients} are coefficients of
$\mathcal{M}(i,j)$.  

The plan for the remainder of the proof is 
as follows.
First, we relabel the compartments 
of $\mathcal{M}(i,j)$
so that the input is in the first compartment.  Next, we use Lemma~\ref{lem:coefficient-map-cycle} to obtain the coefficients.  Finally, we return these coefficients to the original labels. 

Let $\mathcal{M}(i,j)' = (G, \{1\}, \{j-i+1\}, \Leak')$ denote the cycle model obtained from 
$\mathcal{M}(i,j)$ by relabeling each compartment $q$ by $q-i+1$ (mod $n$).  By Lemma~\ref{lem:coefficient-map-cycle}, the following are coefficients of $\mathcal{M}(i,j)'$:

    \begin{enumerate}[label=(\roman*)] 
        \item $e_1',e_2',\dots, e_{n-1}',$
        $e_n' - \displaystyle \prod_{i=1}^n k_{i+1,i}$, where $e_j'$ is the $j^{th}$ elementary symmetric polynomial on the set $E' = \{k_{i+1,i} \ | \ i \notin Leak' \} \ \cup \ \{k_{i+1,i} + k_{0i}\ | \ i \in  Leak' \}$;
        \item $\kappa = \displaystyle 
        \prod_{q=2}^{j-i+1} k_{q,q-1}$.      
    \end{enumerate} 
Lemma~\ref{lem:coefficient-map-cycle} also implies that, if $j-i+1 \neq n$ (mod $n$), then the following is a coefficient of $\mathcal{M}(i,j)'$:
    \begin{enumerate}
        \item[(iii)] $e_1^* \kappa$, where 
                       \begin{align*}  
                e^*_1 ~ := ~ 
                \left(
                    \displaystyle\sum_{q \in \{p+1,p+2,\dots, n \} \smallsetminus \Leak'}
                    k_{q+1,q}
                \right)
                +
                \left(
                    \displaystyle\sum_{q \in \{p+1,p+2,\dots, n \} \cap \Leak'}
                    k_{q+1,q} + k_{0q}
                \right)      ~.
                \end{align*}
    \end{enumerate}

From the coefficients in (i)--(iii), we obtain coefficients of $\mathcal{M}(i,j)$ by undoing the relabeling we did to obtain  $\mathcal{M}(i,j)'$.  It is now straightforward to check that the coefficients resulting from relabeling those in (i), (ii), and (iii) are precisely those in (1), (2), and (3), respectively.
\end{proof}

The following result records some partial derivatives of one of the coefficients described in Proposition~\ref{prop:specific-coefficients}; the proof is elementary and so is omitted.

\begin{lemma} \label{lem:partial-deriv}
Assume that $1 \leq i < j \leq n$, where $n \geq 3$.  Consider the coefficient $\kappa(i,j) =
k_{i+1,i} k_{i+2,i+1} \cdots k_{j,j-1}$,
from Proposition~\ref{prop:specific-coefficients}~(2).  For $1 \leq \ell \leq n$, we have the following partial derivatives:
\begin{align*}
    \frac{\partial \kappa(i,j)}{\partial k_{0 \ell}} ~=~0
    \quad \quad 
    \mathrm{and}
    \quad \quad 
    \frac{\partial \kappa(i,j)}{\partial k_{\ell+1, \ell}}
        ~=~
        \begin{cases}
                             \frac{\kappa(i,j)}{k_{\ell+1,\ell}} \quad & \mathrm{if}~ i \leq \ell \leq j-1 \\
                             0 & \mathrm{otherwise}~.
                         \end{cases}
\end{align*}
\end{lemma}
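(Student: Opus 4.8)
The plan is to recognize $\kappa(i,j)$ as a squarefree monomial in the edge parameters and then apply elementary calculus. First I would rewrite the coefficient in product notation as $\kappa(i,j) = \prod_{q=i}^{j-1} k_{q+1,q}$, so that its factors are exactly the parameters $k_{q+1,q}$ for $q \in \{i, i+1, \ldots, j-1\}$. The hypothesis $1 \leq i < j \leq n$ guarantees that these indices are pairwise distinct and that no index wraps around mod $n$; consequently each edge parameter appears in $\kappa(i,j)$ with exponent at most one, that is, $\kappa(i,j)$ is squarefree. This squarefreeness is the only point that genuinely uses the hypothesis, and it is what makes the quotient $\kappa(i,j)/k_{\ell+1,\ell}$ a polynomial (not merely a rational function) when the differentiation variable is a factor.

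For the leak parameters, I would observe directly that no variable of the form $k_{0\ell}$ occurs among the factors $k_{q+1,q}$, so $\kappa(i,j)$ is constant as a function of each $k_{0\ell}$; hence $\partial \kappa(i,j)/\partial k_{0\ell} = 0$, as claimed. For the edge parameters, I would split into two cases according to whether $k_{\ell+1,\ell}$ is one of the factors of $\kappa(i,j)$. By the description above, $k_{\ell+1,\ell}$ is a factor precisely when $\ell \in \{i, i+1, \ldots, j-1\}$, that is, when $i \leq \ell \leq j-1$. In that case, since $\kappa(i,j)$ is linear in the single variable $k_{\ell+1,\ell}$ by squarefreeness, the product rule gives
\[
\frac{\partial \kappa(i,j)}{\partial k_{\ell+1,\ell}}
~=~
\prod_{\substack{q \in \{i, i+1, \ldots, j-1\} \\ q \neq \ell}} k_{q+1,q}
~=~
\frac{\kappa(i,j)}{k_{\ell+1,\ell}}~.
\]
Otherwise $k_{\ell+1,\ell}$ does not occur in $\kappa(i,j)$, and the derivative is $0$. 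Assembling the two cases yields the stated formula.

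Since the argument is a direct computation, there is no real obstacle; the only step requiring care is confirming that the monomial is squarefree, which rests entirely on the range condition $1 \leq i < j \leq n$ ruling out index collisions. This is precisely why the authors deem the proof elementary and omit it.
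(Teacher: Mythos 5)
Your proposal is correct: the paper omits this proof as elementary, and your direct computation (writing $\kappa(i,j)=\prod_{q=i}^{j-1}k_{q+1,q}$, noting it is a squarefree monomial containing no leak parameters, and differentiating case-by-case) is precisely the argument the authors intend. The one point worth the care you gave it --- that $1 \leq i < j \leq n$ rules out index collisions and mod-$n$ wraparound, so $k_{\ell+1,\ell}$ appears at most once and $\kappa(i,j)/k_{\ell+1,\ell}$ is genuinely a polynomial --- is handled correctly.
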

 
We end this subsection with a result in the same vein as Lemma~\ref{lem:partial-deriv}.  A key difference is that we restrict our attention to models with only one input and one output.  As with Lemma~\ref{lem:partial-deriv}, the proof is straightforward and hence is omitted.

\begin{lemma} \label{lem:partial-deriv-1-input}
Assume that $1 \leq p \leq n$, where $n \geq 3$.  
Consider the coefficients~\eqref{eq:coeff-map-cycle} in Lemma~\ref{lem:coefficient-map-cycle} for an $n$-compartment directed-cycle model 
$(G, \In, \Out, \Leak)$ with $In = \{1\}$ and $Out = \{p\}$.  
The partial derivatives of the coefficients satisfy the following: 
    \begin{enumerate}[label=(\Roman*)]
        \item For $1 \leq i \leq n$, if $\ell \in \Leak$, then
        \begin{align*}
    \frac{\partial e_i }{\partial k_{0 \ell}} ~=~
    \frac{\partial e_i }{\partial k_{\ell+1,+\ell}}~.         
    \end{align*}        
        For $1 \leq i \leq n$, if $\ell, q \in \Leak$, with $\ell \neq q$, then
            \begin{align} \label{eq:lem-difference-partials}
            \frac{\partial e_i }{\partial k_{0 q}}
            -
            \frac{\partial e_i }{\partial k_{0 \ell}} 
            ~=~
            (k_{0 \ell} + k_{\ell+1,\ell} - k_{0q} - k_{q+1,q} ) \overline{e}_{i-2}
            ~,
            \end{align}  
        where, for $j \geq 1$, we let $\overline{e}_{j}$ denote the $j$-th elementary symmetric polynomial on the set 
        $E \smallsetminus \{k_{0 \ell}+k_{\ell +1, \ell},~ k_{0q}+k_{q+1,q} \} =  \{k_{i+1,i} \ | \ i \notin Leak \} \ \cup \ \{k_{i+1,i} + k_{0i}\ | \ i \in  Leak 
        ,~ i \neq \ell,~ i \neq q
        \}$; and let 
                $\overline{e}_{-1}:=0$ and 
        $\overline{e}_{0}:=1$.
        \item 
        If $\ell \in \Leak$, then 
        \begin{align*}
        \left(
        \frac{\partial }{\partial k_{\ell+1,\ell}}
        -
        \frac{\partial }{\partial k_{0 \ell}} 
        \right)
        \left[e_n-\displaystyle{\prod_{i=1}^{n} k_{i+1, i}} \right]
        ~=~
            -
        \displaystyle{\prod_{1 \leq i \leq n,~ i\neq \ell} k_{i+1, i}}~.
        \end{align*}        
        If $\ell, q \in \Leak$, with $\ell \neq q$, then
            \begin{align} \label{eq:lem-difference-partials-type-II}
            \left(
            \frac{\partial  }{\partial k_{0 q}}
            -
            \frac{\partial  }{\partial k_{0 \ell}} 
            \right)
            \left[e_n-\displaystyle{\prod_{i=1}^{n} k_{i+1, i}} \right]
            ~=~
            (k_{0 \ell} + k_{\ell+1,\ell} - k_{0q} - k_{q+1,q} ) \overline{e}_{n-2}
            ~,
            \end{align}  
        where $\overline{e}_{j}$ is as defined in part~(I) above.
        \item For $1 \leq \ell \leq n$, we have:
    \begin{align*}
    \frac{\partial \kappa}{\partial k_{0 \ell}} ~=~0
    \quad \quad 
    \mathrm{and}
    \quad \quad 
    \frac{\partial \kappa}{\partial k_{\ell+1, \ell}}
        ~=~
        \begin{cases}
                             \frac{\kappa}{k_{\ell+1,\ell}} \quad & \mathrm{if}~ 1 \leq \ell \leq p-1 \\
                             0 & \mathrm{otherwise}~.
                         \end{cases}
    \end{align*}
        \item 
        For $1 \leq i \leq n-p$, we have: 
            \begin{enumerate}
                \item If $\ell \in \Leak$ with $\ell \geq p+1$, then
                    \begin{align*}
                        \frac{\partial [e_i^*\kappa]}{\partial k_{\ell+1,\ell}}
                        ~=~
                        \frac{\partial [e_i^*\kappa]}{\partial k_{0 \ell}}~.
                    \end{align*}
                \item 
                $\frac{\partial [e_i^*\kappa] }{\partial k_{p+1,p}} = 0$;
                and, if $\ell \in \Leak$ with $1 \leq \ell \leq p$, then
                    \begin{align*}
                        \frac{\partial [e_i^*\kappa]}{\partial k_{0 \ell}}
                        ~=~
                        0
                        ~.
                    \end{align*}
                \item If $1 \leq \ell \leq p-1$, then
                    \begin{align*}
                        \frac{\partial [e_i^*\kappa] }{\partial k_{\ell+1,\ell}}
                        ~=~
                        e_i^* \frac{\kappa}{ k_{\ell+1, \ell}}~.
                    \end{align*}            \end{enumerate}
    \end{enumerate}
\end{lemma}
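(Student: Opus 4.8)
The plan is to treat all four parts uniformly by first introducing, for each $1 \le q \le n$, the single composite variable
\[
f_q ~:=~ \begin{cases} k_{q+1,q} & q \notin \Leak, \\ k_{q+1,q} + k_{0q} & q \in \Leak, \end{cases}
\]
so that $E = \{f_1,\dots,f_n\}$ and $e_i$ is literally the $i$-th elementary symmetric polynomial in the variables $f_q$. The crucial point, used repeatedly, is that when $\ell \in \Leak$ both $k_{\ell+1,\ell}$ and $k_{0\ell}$ enter $e_i$ (and likewise $e_n$ and each $e_i^*$) only through $f_\ell$, with $\partial f_\ell/\partial k_{\ell+1,\ell} = \partial f_\ell/\partial k_{0\ell} = 1$. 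Thus the chain rule gives $\partial e_i/\partial k_{0\ell} = \partial e_i/\partial f_\ell = \partial e_i/\partial k_{\ell+1,\ell}$, which is exactly the first assertion of part~(I); the corresponding statement for the Type~II coefficient in part~(II) follows the same way, once one notes that the extra monomial $\prod_{i=1}^n k_{i+1,i}$ contains no leak variable and hence is annihilated by $\partial/\partial k_{0\ell}$ while contributing $-\prod_{i \neq \ell} k_{i+1,i}$ under $\partial/\partial k_{\ell+1,\ell}$.

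For the two difference formulas~\eqref{eq:lem-difference-partials} and~\eqref{eq:lem-difference-partials-type-II}, I would expand the symmetric polynomial with respect to the two distinguished variables $f_\ell$ and $f_q$:
\[
e_i ~=~ f_\ell f_q\, \overline{e}_{i-2} + (f_\ell + f_q)\, \overline{e}_{i-1} + \overline{e}_i,
\]
where $\overline{e}_j$ is the elementary symmetric polynomial on $E \setminus \{f_\ell, f_q\}$, using the stated conventions $\overline{e}_{-1} = 0$ and $\overline{e}_0 = 1$. Differentiating in $f_q$ and in $f_\ell$ and subtracting cancels the $\overline{e}_{i-1}$ and $\overline{e}_i$ contributions and leaves $(f_\ell - f_q)\,\overline{e}_{i-2}$; since $f_\ell - f_q = k_{0\ell}+k_{\ell+1,\ell}-k_{0q}-k_{q+1,q}$, this is precisely the claimed expression. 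For part~(II) the product monomial is independent of both leak variables, so it drops out of the difference $\partial/\partial k_{0q} - \partial/\partial k_{0\ell}$, and the Type~II difference reduces to the $i=n$ instance of the Type~I computation.

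Parts~(III) and~(IV) are pure product-rule statements once the supports of the factors are identified. Here $\kappa = \prod_{\ell=1}^{p-1} k_{\ell+1,\ell}$ involves only the edge parameters $k_{21}, \dots, k_{p,p-1}$ and no leak parameter, which gives part~(III) at once; while $e_i^*$ involves only the $f_q$ with $p+1 \le q \le n$, that is, the edges $k_{p+2,p+1}, \dots, k_{1n}$ and the leaks at those compartments. Applying the product rule to $e_i^*\kappa$ and invoking these disjoint supports yields part~(IV): case~(a) is the part~(I) chain-rule identity applied to $e_i^*$ and multiplied by $\kappa$; case~(c) is the derivative $e_i^*\,\partial\kappa/\partial k_{\ell+1,\ell}$ evaluated via part~(III). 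The one point deserving genuine care — and the only place I expect to slow down — underlies case~(b): one must check that the edge $k_{p+1,p}$ lies in the support of \emph{neither} $\kappa$ (which stops at $k_{p,p-1}$) \emph{nor} $E^*$ (which starts at $k_{p+2,p+1}$), and that no leak variable $k_{0\ell}$ with $1 \le \ell \le p$ occurs in $e_i^*$; each forces a vanishing derivative. Keeping the mod-$n$ index conventions and this boundary edge $k_{p+1,p}$ straight is the main (mild) obstacle, as everything else is direct differentiation.
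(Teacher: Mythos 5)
Your proposal is correct, and it is precisely the ``straightforward'' verification that the paper itself omits (the paper states only that, as with Lemma~\ref{lem:partial-deriv}, the proof is elementary and hence not given). Your three ingredients --- the composite variables $f_q$ reducing both leak and edge derivatives to a single chain-rule derivative, the expansion $e_i = f_\ell f_q\,\overline{e}_{i-2} + (f_\ell+f_q)\,\overline{e}_{i-1} + \overline{e}_i$ yielding the difference formulas, and the disjoint-support product-rule bookkeeping for $\kappa$ and $e_i^*$ (including the boundary edge $k_{p+1,p}$ lying in neither support) --- constitute a complete and accurate proof of all four parts.
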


The results in this subsection are used in the remainder of this section.   Specifically, we use 
Lemma~\ref{lem:coefficient-map-cycle} in Subsection~\ref{sec:cycle-non-leak-interlacing}, and use Proposition~\ref{prop:specific-coefficients} and Lemma~\ref{lem:partial-deriv} to prove the main result of Subsection~\ref{sec:cycle-leak-interlacing}
(namely, Proposition~\ref{prop:leak-interlacing}).
Also, Lemma~\ref{lem:partial-deriv-1-input} is used in Subsection~\ref{sec:cycle-non-leak-interlacing} and later in Section~\ref{sec:SL}.

\subsection{Non-leak-interlacing models} \label{sec:cycle-non-leak-interlacing}
The aim of this subsection is to prove that non-leak-interlacing cycle models are unidentifiable (Proposition~\ref{prop:non-leak-interlacing-summary}), which is one implication of 
Theorem ~\ref{thm:main-cycle}.

\begin{proposition}[Non-leak-interlacing models] \label{prop:non-leak-interlacing-summary}
If $\mathcal{M}$ is a directed-cycle model  
that is non-leak-interlacing, then $\mathcal{M}$ is unidentifiable.
\end{proposition}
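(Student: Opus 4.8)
The plan is to prove unidentifiability by exhibiting, at a generic parameter point, a nonzero vector in the kernel of the Jacobian of the coefficient map, so that Proposition~\ref{prop:rank-matrix-for-identifiability} applies. First I would unwind Definition~\ref{def:interlacing}: a directed-cycle model fails to be leak-interlacing precisely when either (i) it is an exceptional model, or (ii) $\lvert \Leak \rvert \geq 2$ and there is an index $\alpha$ with $\{\ell_{\alpha}+1, \ell_{\alpha}+2, \dots, \ell_{\alpha+1}\} \cap (\In \cup \Out) = \varnothing$. Case~(i) is immediate: by Remark~\ref{rem:exceptional} (that is, Proposition~\ref{prop:cycle-1-in-1-out}(2)), exceptional models are unidentifiable. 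So the content lies entirely in Case~(ii), which I treat next.

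In Case~(ii), set $\ell := \ell_{\alpha}$ and $q := \ell_{\alpha+1}$; these are two leaks bounding an arc $\{\ell+1, \dots, q\}$ (indices mod $n$) containing no input or output, so that $\In \cup \Out$ lies entirely in the complementary arc $\{q+1, \dots, \ell\}$. Writing $\tilde k_m := k_{m+1,m}$ for $m \notin \Leak$ and $\tilde k_m := k_{m+1,m} + k_{0m}$ for $m \in \Leak$, I would exhibit the derivation
\[
v ~:=~ k_{\ell+1,\ell}\left(\frac{\partial}{\partial k_{\ell+1,\ell}} - \frac{\partial}{\partial k_{0\ell}}\right) ~-~ k_{q+1,q}\left(\frac{\partial}{\partial k_{q+1,q}} - \frac{\partial}{\partial k_{0q}}\right).
\]
By construction $v(\tilde k_m) = 0$ for every $m$ (the two changes at each leak compartment cancel), and a one-line computation gives $v\!\left(\prod_i k_{i+1,i}\right) = k_{\ell+1,\ell}\prod_{i\neq\ell}k_{i+1,i} - k_{q+1,q}\prod_{i\neq q}k_{i+1,i} = 0$. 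Moreover $v \neq 0$ generically, since $v(k_{\ell+1,\ell}) = k_{\ell+1,\ell}$.

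It remains to check that $v$ annihilates every coefficient. By Proposition~\ref{prop:coeff-many-in-or-out}, every coefficient of the coefficient map equals a coefficient of some one-input/one-output submodel $(G,\{i\},\{j\},\Leak)$ with $i\in\In$, $j\in\Out$; and by Lemma~\ref{lem:coefficient-map-cycle} each such coefficient is one of: a symmetric function $e_1,\dots,e_{n-1}$ of the $\tilde k_m$ (Type~I); the Type~II coefficient $e_n - \prod_i k_{i+1,i}$; the path product $\kappa(i,j)$ (Type~III); or a product $e^*_m(i,j)\,\kappa(i,j)$ of a symmetric function of the $\tilde k_m$ along the complementary arc with $\kappa(i,j)$ (Type~IV). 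Since $v$ kills all $\tilde k_m$, it kills Type~I and the symmetric-function factors appearing in Type~IV, while the computation above handles Type~II. Thus everything reduces to showing $v(\kappa(i,j)) = 0$ for every pair $(i,j)$. By Lemma~\ref{lem:partial-deriv} (applied cyclically), $\partial\kappa(i,j)/\partial k_{0\ell} = \partial\kappa(i,j)/\partial k_{0q} = 0$, while $k_{\ell+1,\ell}\,\partial\kappa(i,j)/\partial k_{\ell+1,\ell}$ equals $\kappa(i,j)$ when the edge $\ell\to\ell+1$ lies on the directed path from $i$ to $j$ and equals $0$ otherwise (and similarly for $q$); hence $v(\kappa(i,j))=0$ as soon as the edges $\ell\to\ell+1$ and $q\to q+1$ lie on the path simultaneously or not at all.

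This last combinatorial claim is the crux of the argument. Its proof is short but conceptual: because the arc $\{\ell+1,\dots,q\}$ contains neither $i$ nor $j$, the directed path from $i$ to $j$ can neither begin nor end inside this arc, so if it enters the arc (necessarily through the unique entering edge $\ell\to\ell+1$) it must leave it (necessarily through the unique exiting edge $q\to q+1$), and conversely; thus the two edges are used together or not at all. Granting the claim, $v$ lies in the kernel of the Jacobian, the generic rank is strictly less than $\lvert E\rvert + \lvert \Leak\rvert = n + \lvert \Leak\rvert$, and Proposition~\ref{prop:rank-matrix-for-identifiability} yields unidentifiability. I expect the main obstacle to be not any heavy computation but the bookkeeping needed to make this clean all-or-nothing path statement apply uniformly across every input-output pair, including the cyclic wrap-around cases where $\ell+1 = q$ or where the bad arc straddles compartment~$n$.
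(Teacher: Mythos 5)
Your proposal is correct, and at its core it exhibits exactly the same linear dependence as the paper: your derivation $v$ is (up to sign) precisely the statement that the column combination $k_{\ell+1,\ell}\bigl(\vec{J}_{k_{0\ell}}-\vec{J}_{k_{\ell+1,\ell}}\bigr)-k_{q+1,q}\bigl(\vec{J}_{k_{0q}}-\vec{J}_{k_{q+1,q}}\bigr)$ vanishes, which is the paper's equation~\eqref{eq:lin-depen-col-general}; both arguments dispose of exceptional models via part~(2) of Proposition~\ref{prop:cycle-1-in-1-out} and both reduce, via Proposition~\ref{prop:coeff-many-in-or-out}, to coefficients of one-input, one-output submodels. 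Where you genuinely depart is in how the dependence is verified. The paper (Lemma~\ref{lem:unIdentifiable-before-output} plus the proof of Proposition~\ref{prop:non-leak-interlacing-summary}) relabels each submodel so its input sits in compartment~$1$, derives the inequalities~(\ref{eq:inequalities-proof-1}--\ref{eq:inequalities-proof-2}) placing both leaks either strictly before the output or at/after it, and then splits into two cases: the first is computed type-by-type from Lemma~\ref{lem:partial-deriv-1-input}, and the second is delegated to the proof of \cite[Theorem~3.10]{GOS}. You instead note that $v$ annihilates every modified parameter $\widetilde{k}_m$ and the full cycle product, so the Type~I and Type~II coefficients and the symmetric factors $e^*_m$ die by the chain and Leibniz rules, and the whole proof collapses to a single label-free combinatorial fact: a directed path on the cycle whose endpoints avoid the arc $\{\ell+1,\dots,q\}$ uses the unique entering edge $\ell\to\ell+1$ if and only if it uses the unique exiting edge $q\to q+1$. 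This buys a unified, self-contained argument -- no relabeling bookkeeping, no case split on the output's position, no external citation -- and the cyclic wrap-around worry you flag at the end is in fact already absorbed by the claim's label-free formulation, so it needs only to be stated and proved once (exactly as you sketch) before being invoked for every pair $(i,j)$. What the paper's route buys in exchange is reuse: the partial-derivative formulas of Lemma~\ref{lem:partial-deriv-1-input} that power its Case~1 are needed again, essentially verbatim, for the singular-locus results in Section~\ref{sec:SL}, so its heavier case analysis is not wasted work there.
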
 

We prove Proposition~\ref{prop:non-leak-interlacing-summary} 
in two steps. 
We first prove the result for models with one input and one output (Lemma~\ref{lem:unIdentifiable-before-output}).  
The second step (at the end of this subsection) is to use Lemma~\ref{lem:unIdentifiable-before-output} to prove the general case.

In our proofs of 
Lemma~\ref{lem:unIdentifiable-before-output}
and Proposition~\ref{prop:non-leak-interlacing-summary},
the key idea is to show a linear dependence among certain columns of the Jacobian matrix of the coefficient map.
This linear dependence is illustrated in the following example.

\begin{example}[Example~\ref{ex:not-leak-interlacing}, continued] \label{ex:show-dependence} 
    Recall that the 
    $4$-compartment 
    cycle model $\mathcal{M}=
(G, \In, \Out, \Leak)$ with 
$\In = \{1\}$, $\Out = \{3\}$, and $\Leak = \{1, 2\}$
is non-leak-interlacing. 
Consider the coefficient map of $\mathcal{M}$ obtained by equation~\eqref{eq:coeff-map-cycle} in Lemma \ref{lem:coefficient-map-cycle},  
and let $J$ denote the resulting 
$(6 \times 6)$ Jacobian matrix.  
The following matrix is formed by the columns of $J$ that correspond to 
the parameters 
 $k_{21}$ and 
$ k_{01}$ (in that order):
\begin{align} \label{eq:2-columns-of-J}
        \begin{pmatrix}
            1 & 
            1 & \\
            k_{32} + k_{02} + k_{43} + k_{14}& 
            k_{32} + k_{02} + k_{43} + k_{14} & \\
           k_{32}k_{43} + k_{02}k_{43} + k_{32}k_{14} + k_{02}k_{14} + k_{43}k_{14}& 
             k_{32}k_{43} + k_{02}k_{43} + k_{32}k_{14} + k_{02}k_{14} + k_{43}k_{14} & \\
           k_{02}k_{43}k_{14} &
           k_{32}k_{43}k_{14}+ k_{02}k_{43}k_{14} &\\
            k_{32} & 
            0 & \\
           k_{14}k_{32}  & 
           0  & 
        \end{pmatrix}~.
\end{align}
Similarly, the columns of $J$ corresponding to 
 $k_{32}$ and 
$ k_{02}$ form the following matrix:
\begin{align} \label{eq:2-more-columns-of-J}
        \begin{pmatrix}
           1 & 
           1 \\
           k_{21} + k_{01} + k_{43} + k_{14}& 
            k_{21} + k_{01} + k_{43} + k_{14} \\
                       k_{21}k_{43} + k_{01}k_{43} + k_{21}k_{14} + k_{01}k_{14} + k_{43}k_{14} & 
           k_{21}k_{43} + k_{01}k_{43} + k_{21}k_{14} + k_{01}k_{14} + k_{43}k_{14} \\
                      k_{01}k_{43}k_{14} &
           k_{21}k_{43}k_{14} + k_{01}k_{43}k_{14} &
           \\
                       k_{21} & 
           0 \\
                      k_{14}k_{21}  & 
           0 
        \end{pmatrix}~.
\end{align}
Observe that the four columns of the matrices~(\ref{eq:2-columns-of-J}--\ref{eq:2-more-columns-of-J}) --
which we denote by $\vec{J}_{k_{21}}$, 
$\vec{J}_{k_{01}}$, 
$\vec{J}_{k_{32}}$, and
$\vec{J}_{k_{02}}$,  
 respectively --
 satisfy the following linear dependence:
 \begin{align} \label{eq:linear-dependence-in-example}
    k_{21} \left( \vec{J}_{k_{01}} - \vec{J}_{k_{21}} \right)
    \quad = \quad
        k_{32} \left( \vec{J}_{k_{02}} - \vec{J}_{k_{32}} \right)
        \quad = \quad
        \begin{pmatrix}
            0 \\
            0 \\
            0 \\
            k_{21} k_{32} k_{43} k_{14}\\
            - k_{21} k_{32}\\
            - k_{21} k_{32} k_{14}
        \end{pmatrix}
        ~.
\end{align}
This dependence shows (by Proposition~\ref{prop:rank-matrix-for-identifiability}) 
that $\mathcal{M}$ is unidentifiable.
Our next result asserts that a linear dependence like the one in~\eqref{eq:linear-dependence-in-example} always occurs when a non-exceptional, one-input, one-output model is non-leak-interlacing; see equation~\eqref{eq:lin-depen-col} below and Remark~\ref{rem:non-leak-interlacing}.
\end{example}

\begin{lemma}[Linear dependence for one-input, one-output models] \label{lem:unIdentifiable-before-output}
    Let $n \geq 3$.
    Let $\mathcal{M}=
(G, \In, \Out, \Leak)$ 
    be an $n$-compartment directed-cycle model with $In = \{1\}$ and $Out = \{p\}$ (for some $1 \leq p \leq n$).
    Assume that
    there exist $a, b \in Leak$
    such that 
    $1 \leq  a < b  \leq p-1$ 
    or $p \leq a < b \leq n$.
Let $J$ denote the 
Jacobian matrix of the coefficient map of $\mathcal{M}$.
Let $\vec{J}_{k_{0a}}$, 
$\vec{J}_{k_{a+1, a}}$, 
$\vec{J}_{k_{0b}}$, and
$\vec{J}_{k_{b+1, b}}$ denote the columns of $J$ that correspond to 
the parameters
 $k_{0a}$, 
$ k_{a+1, a}$, 
$k_{0b}$,
and $k_{b+1, b}$,  
 respectively.
Then these four column vectors satisfy the 
following linear dependence: 
\begin{align}
    \label{eq:lin-depen-col}        
    k_{a+1,a} \left( \vec{J}_{k_{0a}} - \vec{J}_{k_{a+1, a}} \right)
\quad = \quad
k_{b+1,b} \left( \vec{J}_{k_{0b}} - \vec{J}_{k_{b+1, b}} \right)~.
   \end{align}
In particular, $\mathcal{M}$ is unidentifiable.
\end{lemma}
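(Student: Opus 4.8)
The plan is to prove the vector identity~\eqref{eq:lin-depen-col} coordinate by coordinate. Each coordinate of the four columns $\vec{J}_{k_{0a}}, \vec{J}_{k_{a+1,a}}, \vec{J}_{k_{0b}}, \vec{J}_{k_{b+1,b}}$ is the partial derivative of a single coefficient of the map from Lemma~\ref{lem:coefficient-map-cycle} with respect to one of the four parameters $k_{0a}, k_{a+1,a}, k_{0b}, k_{b+1,b}$. Hence it suffices to show that, for every coefficient $c$ of the map,
\[
k_{a+1,a}\left(\frac{\partial c}{\partial k_{0a}} - \frac{\partial c}{\partial k_{a+1,a}}\right)
~=~
k_{b+1,b}\left(\frac{\partial c}{\partial k_{0b}} - \frac{\partial c}{\partial k_{b+1,b}}\right).
\]
I would organize the coefficients by the four Types of Notation~\ref{notation:types-of-coefficients} and apply the partial-derivative formulas of Lemma~\ref{lem:partial-deriv-1-input}, splitting into the two hypotheses $1\le a<b\le p-1$ and $p\le a<b\le n$ wherever the formulas demand it.

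For the Type~I coefficients $e_1,\dots,e_{n-1}$, part~(I) of Lemma~\ref{lem:partial-deriv-1-input} gives $\partial e_i/\partial k_{0\ell} = \partial e_i/\partial k_{\ell+1,\ell}$ at each leak $\ell$, so both parenthesized differences vanish and the identity reads $0=0$. For the Type~II coefficient $e_n-\prod_{i=1}^n k_{i+1,i}$, part~(II) gives the difference $\prod_{i\neq\ell}k_{i+1,i}$ at a leak $\ell$; multiplying by $k_{\ell+1,\ell}$ restores the full product $\prod_{i=1}^n k_{i+1,i}$, which is the same for $\ell=a$ and $\ell=b$, so the two sides agree. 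The Type~III coefficient $\kappa$ and the Type~IV coefficients $e_i^*\kappa$ are where the case split matters: using parts~(III) and~(IV), both sides equal $-\kappa$ and $-e_i^*\kappa$ respectively in the case $1\le a<b\le p-1$, and both sides vanish in the case $p\le a<b\le n$. In this second case I would separate the subcase $a=p$ (handled by $\partial_{k_{p+1,p}}[e_i^*\kappa]=0$ together with $\partial_{k_{0p}}[e_i^*\kappa]=0$) from the subcase $a\ge p+1$ (handled by part~(IV)(a)), while $b\ge p+1$ always, so its column difference is $0$.

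With the identity verified for every coefficient,~\eqref{eq:lin-depen-col} holds as a vector equation, and rearranging it yields the relation
\[
k_{a+1,a}\vec{J}_{k_{0a}} - k_{a+1,a}\vec{J}_{k_{a+1,a}} - k_{b+1,b}\vec{J}_{k_{0b}} + k_{b+1,b}\vec{J}_{k_{b+1,b}} ~=~ \vec{0}
\]
among four distinct columns of $J$ (distinct since $a\neq b$), with coefficients $k_{a+1,a}$ and $k_{b+1,b}$ that are nonzero at a generic point. Thus $J$ is column-rank-deficient, its rank is strictly below $|E|+|\Leak|$, and Proposition~\ref{prop:rank-matrix-for-identifiability} gives that $\mathcal{M}$ is unidentifiable.

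The step I expect to be the main obstacle is the Type~IV bookkeeping: the derivatives of $e_i^*\kappa$ depend on whether a leak lies at a compartment $\le p-1$, exactly at $p$, or $\ge p+1$, so one must confirm case by case that the left- and right-hand expressions land on the same value. The clean reason the whole identity holds is that Lemma~\ref{lem:partial-deriv-1-input} is engineered so that, after scaling by $k_{\ell+1,\ell}$, the combination $\partial_{k_{0\ell}}-\partial_{k_{\ell+1,\ell}}$ at a leak $\ell$ produces a quantity independent of which leak $\ell$ (within the relevant range) is selected, which is precisely what equality of the two sides asserts.
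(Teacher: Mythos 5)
Your proposal is correct and follows essentially the same approach as the paper: a coordinate-by-coordinate verification of~\eqref{eq:lin-depen-col}, organized by the coefficient Types of Notation~\ref{notation:types-of-coefficients}, using the partial-derivative formulas of Lemma~\ref{lem:partial-deriv-1-input} with the same case split, and concluding via Proposition~\ref{prop:rank-matrix-for-identifiability}. The only (minor) divergence is in the case $p \leq a < b \leq n$, where the paper cites the proof of Theorem~3.10 in~\cite{GOS} to dispose of all coordinates except the Type~II one, whereas you derive that case directly from parts~(I), (III), and (IV)(a)--(b) of Lemma~\ref{lem:partial-deriv-1-input} (correctly separating the subcase $a=p$ from $a \geq p+1$), making your argument slightly more self-contained.
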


\begin{remark} \label{rem:non-leak-interlacing}
    In the context of Lemma~\ref{lem:unIdentifiable-before-output}, the assumption on the leaks (that is, the existence of $a, b \in Leak$
    with 
    $1 \leq  a < b  \leq p-1$ 
    or $p \leq a < b \leq n$)
    is equivalent to the condition that the model $\mathcal{M}$
    is not an exceptional model  
    and is non-leak-interlacing.
\end{remark}

\begin{proof}[Proof of Lemma~\ref{lem:unIdentifiable-before-output}] 
Consider $a, b \in Leak$, as in the statement of the lemma.  
The coefficient map for $\mathcal{M}$ is given in equation~\eqref{eq:coeff-map-cycle} in Lemma \ref{lem:coefficient-map-cycle}. 
In particular, we use the notation 
introduced there: 
\begin{enumerate}[label=(\roman*)]
    \item $\kappa:=k_{21} k_{32} \dots k_{p,p-1}$ is the product of edge-parameters along the path from input to output,
    unless $p=1$, in which case we define $\kappa:=1$,
    \item $e_j$ is the $j^{th}$ elementary symmetric polynomial on the set
    $E = \{k_{i+1,i} \ | \ i \notin Leak \} \ \cup \ \{ k_{i+1,i} + k_{0i}\ | \ i \in  Leak \}$,
    and
    \item $e_j^*$ is the $j^{th}$ elementary symmetric polynomial on a set, namely $E^*$, that involves 
    parameters of edges and leaks \uline{after} the output (compartment-$p$).
\end{enumerate}

We consider two cases.

{\bf Case 1:} The leaks $a$ and $b$ satisfy the inequalities
    $1 \leq  a < b  \leq p-1$.

Consider the following claim.

\noindent {\bf Claim:}
For $\ell \in \{a,b\}$, the following equalities of vectors hold\footnote{An instance of this claim appears in equation~\eqref{eq:linear-dependence-in-example}
in Example~\ref{ex:show-dependence}, with $a=1$ and $b=2$.}: 

\begin{align} \label{eq:col-vec-reln}
k_{\ell+1,\ell} \left( \vec{J}_{k_{0\ell}} - \vec{J}_{k_{\ell+1, \ell}} \right)
    \quad = \quad
    k_{\ell+1,\ell}
    \begin{pmatrix}
        \left( \frac{\partial}{\partial k_{0 \ell }} - \frac{\partial}{\partial k_{\ell +1, \ell}} \right)  [e_1] \\
        \left( \frac{\partial}{\partial k_{0 \ell }} - \frac{\partial}{\partial k_{\ell +1, \ell}} \right)  [e_2] \\
    \vdots \\
        \left( \frac{\partial}{\partial k_{0 \ell }} - \frac{\partial}{\partial k_{\ell +1, \ell}} \right)  [e_{n-1}] \\
        \left( \frac{\partial}{\partial k_{0 \ell }} - \frac{\partial}{\partial k_{\ell +1, \ell}} \right)  
        [ e_{n} - \displaystyle{\prod_{i=1}^{n} k_{i+1, i}} ] \\
        \left( \frac{\partial}{\partial k_{0 \ell }} - \frac{\partial}{\partial k_{\ell +1, \ell}} \right)  [\kappa] \\
        \left( \frac{\partial}{\partial k_{0 \ell }} - \frac{\partial}{\partial k_{\ell +1, \ell}} \right)  [e_1^* \kappa] \\
        \left( \frac{\partial}{\partial k_{0 \ell }} - \frac{\partial}{\partial k_{\ell +1, \ell}} \right)  [e_2^* \kappa] \\
    \vdots \\
        \left( \frac{\partial}{\partial k_{0 \ell }} - \frac{\partial}{\partial k_{\ell +1, \ell}} \right)  [e_{n-p}^* \kappa] \\
    \end{pmatrix}
    ~ = ~
    \begin{pmatrix}
    0  \\
    0  \\
    \vdots  \\
    0  \\
     \displaystyle{\prod_{i=1}^{n} k_{i+1, i}}
    \\
    - \kappa
    \\
    - e_1^*\kappa 
 \\ 
    - e_2^*\kappa 
     \\
    \vdots
    \\
    - e_{n-p}^*\kappa  
      \\
    \end{pmatrix}~.
\end{align}
In~\eqref{eq:col-vec-reln}, the right-most vector does {\em not} depend on $\ell$, and so the claim  implies the desired equality~\eqref{eq:lin-depen-col}. 

Also, observe that
the first equality 
in~\eqref{eq:col-vec-reln} 
is by definition.  Accordingly, the remainder of the proof (for Case~1) is dedicated to proving the second equality in~\eqref{eq:col-vec-reln}.  We do so by considering each entry -- or certain subsets of entries -- individually (these subsets correspond to the coefficients of Types I--IV, as in Notation~\ref{notation:types-of-coefficients}).  Assume in what follows (for this case) that we have fixed $\ell \in \{a,b\}$.

\noindent
{\bf First $n-1$ entries of~\eqref{eq:col-vec-reln}.}
For $1 \leq j \leq n-1$, the following equality follows from 
Lemma~\ref{lem:partial-deriv-1-input}(I):
\begin{align*} 
     \left( \frac{\partial}{\partial k_{0 \ell }} - \frac{\partial}{\partial k_{\ell +1, \ell}} \right)  [e_j] ~=~ 0~.
\end{align*}

\noindent
{\bf The $n$-th entry of~\eqref{eq:col-vec-reln}.}
By Lemma~\ref{lem:partial-deriv-1-input}(II), we obtain the first equality here (and the second is straightforward):
\begin{align} \label{eq:partial-elemen-minus-cycle}
    k_{\ell+1,\ell}
     \left( \frac{\partial}{\partial k_{0 \ell }} - \frac{\partial}{\partial k_{\ell +1, \ell}} \right)  
     \left[ e_n - \displaystyle{\prod_{i=1}^{n} k_{i+1, i}} \right] ~=~    
     k_{\ell+1,\ell} \displaystyle{\prod_{1 \leq i \leq n,~ i\neq \ell} k_{i+1, i}}
         ~=~
    \displaystyle{\prod_{i=1}^{n} k_{i+1, i}}~.
\end{align}
We remark that the above equalities did not use the assumption underlying this case ($1 \leq  a < b  \leq p-1$), only the fact that $\ell$ is a leak.  This will be useful when we consider Case~2.

\noindent
{\bf The $n+1$ entry of~\eqref{eq:col-vec-reln}.}
Lemma~\ref{lem:partial-deriv-1-input}(III) yields the following (the assumption $1 \leq a < b \leq p-1$ is used here to assure that $1 \leq \ell \leq p-1 $):
\begin{align} \label{eq:kappa-partial}
    \frac{\partial}{\partial k_{0 \ell }} [\kappa] ~=~0
    \quad 
    {\rm and}
    \quad 
        k_{\ell+1,\ell}
    \frac{\partial}{\partial k_{\ell + 1, \ell }} [\kappa] ~=~
    \kappa~.
\end{align}
The entry in~\eqref{eq:col-vec-reln} corresponding to $\kappa$ now follows.

\noindent
{\bf Last $n-p$ entries of~\eqref{eq:col-vec-reln}.}
Parts (b) and (c) of Lemma~\ref{lem:partial-deriv-1-input}(IV) imply the following (the inequalities $1 \leq \ell \leq p-1 $ are again used here):
\begin{align*} 
    \notag
k_{\ell+1, \ell}
    \left( \frac{\partial}{\partial k_{0 \ell }} - \frac{\partial}{\partial k_{\ell +1, \ell}} \right)  [e_{j}^* \kappa]
     ~&=~ - e_j^* \kappa ~.
     \notag
\end{align*}

We conclude that the claim holds, and so the proposition is true for Case~1.

{\bf Case 2:} The leaks $a$ and $b$ satisfy $p \leq a < b \leq n$.
In this case, it was shown in \cite[proof of Theorem 3.10]{GOS} that the vectors  
$\vec{J}_{k_a+1, a} - \vec{J}_{k_{0a}}$ and $\vec{J}_{k_b+1, b}-\vec{J}_{k_{0b}}$ are zero in all coordinates except the one corresponding to the 
(Type II) 
coefficient $e_n - \prod^n_{i=1}k_{i+1, i}$. So, only one coordinate of the desired equality~\eqref{eq:lin-depen-col} remains to be checked.
In fact, we already saw that the equalities in~\eqref{eq:partial-elemen-minus-cycle}, from the prior case, hold also in Case~2.
This implies that, as desired, the final coordinate of~\eqref{eq:lin-depen-col}
holds for Case~2.

Finally, the linear dependence~\eqref{eq:lin-depen-col} implies, by Proposition~\ref{prop:rank-matrix-for-identifiability}, 
that $\mathcal{M}$ is unidentifiable.
\end{proof}

We end this section by using Lemma~\ref{lem:unIdentifiable-before-output}, which considered the case of models with one input and one output, 
to prove the general case (Proposition~\ref{prop:non-leak-interlacing-summary}).

\begin{proof}[Proof of Proposition~\ref{prop:non-leak-interlacing-summary}]
Consider an $n$-compartment directed-cycle model
$\mathcal{M} = (G, \In, \Out, \Leak)$, with  
 $Leak = \{\ell_1, \ell_2, ..., \ell_z \}$, where $1 \leq \ell_1 < \ell_2 < \dots < \ell_z \leq n$.  
Assume that $\mathcal M$ is non-leak-interlacing.  
If $\mathcal{M}$ is an exceptional model (Definition~\ref{def:exceptional-model}), then part~(2) of Proposition~\ref{prop:cycle-1-in-1-out} implies that $\mathcal{M}$ is unidentifiable.

For the rest of the proof, we consider the remaining case (when $\mathcal{M}$ is not an exceptional model).   
This case, by Definition~\ref{def:interlacing}, is when 
$\lvert Leak \rvert \geq 2$, and, 
additionally,
        there exists $\alpha \in \{1,2,\dots,z \}$
        such that 
        $\{\ell_{\alpha}+1, \ell_{\alpha}+2, \dots, \ell_{\alpha+1} \} \cap (\In \cup \Out) = \varnothing$. 
Let $a:=\ell_{\alpha}$ and $b:=\ell_{\alpha+1}$.        

In the remainder of the proof, we use the following notation.  For a model $\mathcal{N}$, we let $J(\mathcal{N})$ denote the Jacobian matrix of the coefficient map of $\mathcal{N}$.
Also, given a parameter
 $k_{qr}$ of $\mathcal{N}$, let 
$\overrightarrow{J(\mathcal{N})}_{k_{qr}}$ denote the column of $J(\mathcal{N})$ that corresponds to $k_{qr}$.

By Proposition~\ref{prop:rank-matrix-for-identifiability},
it suffices to show the following linear dependence  (cf.\ equation~\eqref{eq:lin-depen-col}):
\begin{align}
    \label{eq:lin-depen-col-general}        
    k_{a+1,a} \left( \overrightarrow{J(\mathcal{M})}_{k_{0a}} - \overrightarrow{J(\mathcal{M})}_{k_{a+1, a}} \right)
\quad = \quad
k_{b+1,b} \left( \overrightarrow{J(\mathcal{M})}_{k_{0b}} - \overrightarrow{J(\mathcal{M})}_{k_{b+1, b}} \right)~.
   \end{align}

Next, by Proposition~\ref{prop:coeff-many-in-or-out}, each coefficient of the coefficient map of $\mathcal{M}$ is the coefficient of some model $\mathcal{M}_{ij} := 
(G, \{i\}, \{j\}, \Leak)$, 
where $i\in \In$ and $j \in \Out$. 
Hence, showing the equality~\eqref{eq:lin-depen-col-general} reduces to showing the following, for all $i\in \In$ and $j \in \Out$:
\begin{align}
    \label{eq:lin-depen-col-general-i-j}         k_{a+1,a} \left( \overrightarrow{J(\mathcal{M}_{ij})}_{k_{0a}} - \overrightarrow{J(\mathcal{M}_{ij})}_{k_{a+1, a}} \right)
\quad = \quad
k_{b+1,b} \left( \overrightarrow{J(\mathcal{M}_{ij})}_{k_{0b}} - \overrightarrow{J(\mathcal{M}_{ij})}_{k_{b+1, b}} \right)~.
\end{align}

We aim to use Lemma~\ref{lem:unIdentifiable-before-output} to prove equation~\eqref{eq:lin-depen-col-general-i-j}.  Accordingly, we relabel the compartments of $\mathcal{M}_{ij}$ so that the unique input is in compartment-$1$ (rather than compartment-$i$). 

Call the resulting model $\mathcal{M}_{ij}'$, and let $a'$, $b'$, and $p$ denote the relabelings of $a$, $b$, and $j$, respectively.  

By construction of $a$ and $b$ (in $\mathcal{M}$), the input in compartment-$1$ (in $\mathcal{M}_{ij}'$) is not in $\{a'+1,a'+2,\dots, b'\}$.  Hence:
\begin{align} \label{eq:inequalities-proof-1}
    1 \leq a' < b' \leq n~.
\end{align}
Similarly, the output in compartment-$p$
(in $\mathcal{M}_{ij}'$)
is also not in 
$\{a'+1,a'+2,\dots, b'\}$, so:
\begin{align} \label{eq:inequalities-proof-2}
    \mathrm{ either}
    \quad
    p \leq a'
    \quad
    \mathrm{ or}
    \quad
    b' \leq p-1~,
\end{align}
where we are also using the inequalities~\eqref{eq:inequalities-proof-1} here. 

The inequalities~(\ref{eq:inequalities-proof-1}--\ref{eq:inequalities-proof-2}) allow us to apply Lemma~\ref{lem:unIdentifiable-before-output}, which implies the following:
\begin{align}
    \label{eq:lin-depen-col-general-i-j-relabeled}         k_{a'+1,a'} \left( \overrightarrow{J(\mathcal{M}_{ij}')}_{k_{0a'}} - \overrightarrow{J(\mathcal{M}_{ij}')}_{k_{a'+1, a'}} \right)
\quad = \quad
k_{b'+1,b'} \left( \overrightarrow{J(\mathcal{M}_{ij}')}_{k_{0b'}} - \overrightarrow{J(\mathcal{M}_{ij}')}_{k_{b'+1, b'}} \right)~.
\end{align}
Finally, by relabeling back to the original labels of the compartments, we obtain from~\eqref{eq:lin-depen-col-general-i-j-relabeled} the desired equality~\eqref{eq:lin-depen-col-general-i-j}.
\end{proof}

\subsection{Leak-interlacing models}  \label{sec:cycle-leak-interlacing}
In the prior section, we proved one implication of Theorem~\ref{thm:main-cycle}. 
We prove the remaining implication in this subsection (Proposition~\ref{prop:leak-interlacing} below).
The proof of Proposition~\ref{prop:leak-interlacing} requires two definitions, as follows.

\begin{definition} \label{def:family}
 A directed-cycle model $\mathcal{M}$ is in the {\bf exceptional family} if 
 there exists an exceptional model $\mathcal{M}'$ (Definition~\ref{def:exceptional-model}) 
 such that $\mathcal{M}$ is obtained from $\mathcal{M}'$ 
 by adding at least one input and/or at least one output.
\end{definition}

Recall that the idea behind leak-interlacing is that between any two leaks, there exists at least one input or output.  The next definition strengthens this requirement so that 
between any two leaks, there exists \uline{exactly} one input or output (and not both).

\begin{definition} \label{def:minimal}
    Let $n \geq 3$.  
    An $n$-compartment directed-cycle model $\mathcal{M} = (G, \In, \Out, \Leak)$ 
    with $\lvert Leak \rvert \geq 2$
    is {\bf minimally leak-interlacing} if the following hold:
    \begin{enumerate}
        \item $\mathcal{M}$ is leak-interlacing, and 
        \item if  $Leak = \{\ell_1, \ell_2, ..., \ell_z \}$, where $1 \leq \ell_1 < \ell_2 < \dots < \ell_z \leq n$, 
        then 
        for all $\alpha \in \{1,2,\dots,z \}$, 
        exactly one of the following holds: 
            \begin{enumerate}
                \item 
                $ \lvert \{\ell_{\alpha}+1, \ell_{\alpha}+2, \dots, \ell_{\alpha+1} \} \cap \In  \rvert = 1 $ 
                and 
                $ \lvert \{\ell_{\alpha}+1, \ell_{\alpha}+2, \dots, \ell_{\alpha+1} \} \cap \Out  \rvert = 0 $, or 
               \item 
                $ \lvert \{\ell_{\alpha}+1, \ell_{\alpha}+2, \dots, \ell_{\alpha+1} \} \cap \In  \rvert = 0 $ 
                and 
                $ \lvert \{\ell_{\alpha}+1, \ell_{\alpha}+2, \dots, \ell_{\alpha+1} \} \cap \Out  \rvert = 1 $.                 
            \end{enumerate}
    \end{enumerate}
\end{definition}

The next result states that, from certain leak-interlacing models, we can obtain a minimally leak-interlacing model by removing inputs and/or outputs.

\begin{lemma}[Minimally leak-interlacing submodels] \label{lem:minimal}
    Let $\mathcal{M}= (G, \In, \Out, \Leak)$ be a leak-interlacing directed-cycle model with 
    $\lvert \In \rvert \geq 1$, 
    $\lvert \Out \rvert \geq 1$, and 
    $\lvert \Leak \rvert \geq 2$. 
    If $\mathcal{M}$ is \uline{not} in the exceptional family, 
    then there exist subsets $\In' \subseteq \In $ and $\Out' \subseteq \Out$, with 
    $\lvert \In' \rvert \geq 1$, 
    $\lvert \Out' \rvert \geq 1$,
     and
    $\In' \cap \Out' = \varnothing$,
    such that the model $\mathcal{M}'= (G, \In', \Out', \Leak)$ is minimally leak-interlacing.
\end{lemma}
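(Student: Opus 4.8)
The plan is to process the leaks cyclically and, for each consecutive pair of leaks $(\ell_\alpha, \ell_{\alpha+1})$, retain exactly one input or output from the arc strictly between them, discarding the rest. Concretely, write $\Leak = \{\ell_1 < \ell_2 < \dots < \ell_z\}$ and consider the $z$ cyclic arcs $A_\alpha := \{\ell_\alpha+1, \ell_\alpha+2, \dots, \ell_{\alpha+1}\}$ (indices mod $n$, with $\ell_{z+1}:=\ell_1$). Since $\mathcal{M}$ is leak-interlacing with $|\Leak|\geq 2$, Definition~\ref{def:interlacing}(2) guarantees that each $A_\alpha$ meets $\In\cup\Out$. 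For each $\alpha$, I would choose a single distinguished compartment $c_\alpha \in A_\alpha \cap (\In\cup\Out)$, recording whether I will keep it as an input or as an output (if $c_\alpha\in\In$ keep it as an input; if $c_\alpha\in\Out\smallsetminus\In$ keep it as an output). I then set $\In'$ to be the set of distinguished compartments designated as inputs and $\Out'$ the set designated as outputs.

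The main work is to verify the three requirements on $\In',\Out'$. First, $\In'\cap\Out'=\varnothing$ holds by construction, since each distinguished compartment is assigned to exactly one of the two sets. Second, each arc $A_\alpha$ now contributes exactly one element to $\In'\cup\Out'$ (its distinguished compartment), so each $A_\alpha$ satisfies exactly one of the two alternatives in Definition~\ref{def:minimal}(2); in particular $\mathcal{M}'$ is again leak-interlacing and, being built on the same graph and leak set, is not an exceptional model. This gives minimally leak-interlacing. The delicate point is the third requirement, $|\In'|\geq 1$ and $|\Out'|\geq 1$: it could a priori fail if every distinguished compartment ended up being an input, or every one an output.

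The hard part is precisely ruling out these degenerate assignments, and this is exactly where the hypothesis that $\mathcal{M}$ is \emph{not} in the exceptional family is used. I would argue as follows. If every arc's distinguished compartment is (or can only be chosen as) an input, then no arc contains an output, forcing $\Out=\varnothing$, contradicting Assumption~\ref{assumption}; similarly the all-output case would force $\In=\varnothing$. The subtler obstruction is when $|\Leak|=2$ and one arc is forced to input-only while the other is forced to output-only, yet the choice of distinguished compartments still collapses --- this is the configuration that, when the two leaks are adjacent to the input/output in the pattern of Definition~\ref{def:exceptional-model}, corresponds to the exceptional family. I would show that whenever both a valid input-choice and a valid output-choice are obstructed across all arcs simultaneously, the model must be obtainable from an exceptional model by adding inputs/outputs (Definition~\ref{def:family}), contradicting the hypothesis. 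Thus some arc admits an input-choice and some (possibly the same, but then a second arc) admits an output-choice, letting me pick $\In'$ and $\Out'$ both nonempty. The remaining verifications --- that relabeling and the cyclic index arithmetic are consistent, and that distinct arcs yield distinct distinguished compartments (which holds because the arcs $A_\alpha$ are pairwise disjoint as the $\ell_\alpha$ are distinct) --- are routine.
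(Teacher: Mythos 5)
Your construction has a genuine gap, and it stems from the rigid assignment rule ``if $c_\alpha\in\In$ keep it as an input; if $c_\alpha\in\Out\smallsetminus\In$ keep it as an output.'' A compartment lying in $\In\cap\Out$ is thereby forced to be an input in $\mathcal{M}'$, and this breaks the nonemptiness of $\Out'$ in cases that your fallback analysis cannot repair. Concretely, take the $4$-compartment directed-cycle model with $\In=\Out=\{1,3\}$ and $\Leak=\{2,4\}$. This model is leak-interlacing (each arc $\{3,4\}$ and $\{1,2\}$ contains a compartment of $\In\cup\Out$) and is not in the exceptional family (an exceptional submodel would need its single output to be a leak compartment, but here $(\In\cup\Out)\cap\Leak=\varnothing$). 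Your rule designates both distinguished compartments ($1$ and $3$) as inputs, yielding $\Out'=\varnothing$. Your first rescue claim --- that if every arc's distinguished compartment can only be chosen as an input, then no arc contains an output, forcing $\Out=\varnothing$ --- is false here: compartments $1$ and $3$ \emph{are} outputs, they are merely also inputs, so no contradiction with Assumption~\ref{assumption} arises. Your second rescue claim (that a simultaneous obstruction forces membership in the exceptional family) is also false for this example, and it is in any case only asserted (``I would show''), not proved. The fix is to make the role assignment flexible rather than rigid: the paper first secures one \emph{output} (from an arc with no inputs if such an arc exists, otherwise from any arc containing an output) and only then fills each remaining arc with an input or an output; in particular a compartment of $\In\cap\Out$ may be designated as an output in $\mathcal{M}'$. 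For the example above this gives $\In'=\{1\}$ and $\Out'=\{3\}$, which works.

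There is a second gap: you justify that $\mathcal{M}'$ is not an exceptional model by saying it is ``built on the same graph and leak set'' as $\mathcal{M}$. That inference is invalid --- exceptionality (Definition~\ref{def:exceptional-model}) depends on where the inputs and outputs sit relative to the leaks, and $\mathcal{M}'$ has different inputs and outputs than $\mathcal{M}$. This is precisely the step where the hypothesis that $\mathcal{M}$ is not in the exceptional family must be used, and it is the only place it is needed: if $\lvert\Leak\rvert\geq 3$, then $\mathcal{M}'$ cannot be exceptional since exceptional models have exactly two leaks; if $\lvert\Leak\rvert=2$ and $\In'\subsetneq\In$ or $\Out'\subsetneq\Out$, then $\mathcal{M}$ is obtained from $\mathcal{M}'$ by adding inputs and/or outputs, so $\mathcal{M}'$ being exceptional would place $\mathcal{M}$ in the exceptional family (Definition~\ref{def:family}), a contradiction; and if $\In'=\In$ and $\Out'=\Out$, then $\mathcal{M}'=\mathcal{M}$ is leak-interlacing and hence, by Definition~\ref{def:interlacing}, not exceptional. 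Your proposal instead spends the exceptional-family hypothesis on the nonemptiness of $\In'$ and $\Out'$, where (as the example shows) it does not help and is not what is needed.
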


\begin{proof}
The model $\mathcal{M}$ is leak-interlacing, so, for every directed path 
(in the sense of Remark~\ref{rem:path})
$(\ell_\alpha +1)$ to $\ell_{\alpha +1}$, where $\alpha \in [z]$, there must be at least one input or at least one  output. 
Also, at least two such paths exist, because $\lvert \Leak \rvert \geq 2$. 
We consider two cases.

{\bf Case 1:} Each path contains at least one input. In this case, choose a path with at least one output (which exists because $\lvert \Out \rvert \geq 1$), and place this output in $Out'$.  
For each remaining path, pick one input and place it in $In'$.

{\bf Case 2:} There is a path that does not contain inputs. 
In this case, pick an output from this path and place it in $\Out'$.  Next, pick a path that contains an input (which exists because $\lvert \In \rvert \geq 1$), and choose one such input to place in $\In'$.  For each remaining path, pick an input or an output and place it in $\In'$ or $\Out'$, respectively. 

By construction, each path contains either exactly one input or exactly one output.  Additionally,
$\lvert \In' \rvert \geq 1$, 
    $\lvert \Out' \rvert \geq 1$,
    and
    $\In' \cap \Out' = \varnothing$. 

Next, we claim that the resulting model $\mathcal{M}'= (G, \In', \Out', \Leak)$ is not an exceptional model.  To see this, we consider three cases.  
First, consider the case when $|\Leak | \geq 3$.  By definition, exceptional models have only two leaks, so $\mathcal{M}'$ is not an exceptional model.  
Next, consider the case when $|\Leak | =2$ and, additionally, $| \In| \geq 2$ or $|\Out| \geq 2$.  In this case, $\mathcal{M}$ is obtained from $\mathcal{M}'$ by adding at least one input and/or at least one output.  By assumption, $\mathcal{M}$ is not  in the exceptional family and so (by definition) $\mathcal{M}'$ is not an exceptional model.
We consider the final case, which is when $|\Leak | =2$ and $| \In| =|\Out| = 1$.
In this case,  $\mathcal{M}' = \mathcal{M}$ and (by assumption) $\mathcal{M}$ is leak-interlacing, so (by definition) $\mathcal{M}'$ cannot be an exceptional model.

We conclude that $\mathcal{M}'$
is minimally leak-interlacing.  
\end{proof}

Recall that adding inputs and/or outputs preserves identifiability (Proposition~\ref{prop:add-in-out}).  So, in the context of Lemma~\ref{lem:minimal}, 
if a minimally leak-interlacing model $\mathcal{M}'$ is generically locally
identifiable, so is $\mathcal{M}$.  We use this fact in the proof of the next result.

Additional ideas that underlie  
the proof of the next result are illustrated in the following example.

\begin{example}[Example~\ref{ex:coeff-map}, continued] \label{ex:ideas-in-big-proof}
For the model in Figure~\ref{fig:cycle-running-example}, 
we computed earlier, in~\eqref{eq:coeff-map-for-main-ex},  
 the coefficient map 
$\msc : \R^{5} \rightarrow \R ^5$, 
and its
Jacobian matrix
(in Example~\ref{ex:jacobian-matrix}). Next, we consider the following modified version of the coefficent map: 
$\mathsf{d} : \R^{5} \rightarrow \R ^5$, given by $(\mathsf{d}_1,\mathsf{d}_2,\mathsf{d}_3,\mathsf{d}_4,\mathsf{d}_5):=(\msc_1,\msc_2,\msc_5/\msc_4, \msc_4, \msc_3)$. 
The Jacobian matrix of $\mathsf{d} $ 
--
where the columns correspond to the parameters 
$ k_{21}, k_{32}, k_{13}, k_{01}, k_{03}$, 
respectively
-- 
is the following $5 \times 5$ matrix:
    \begin{align*}        
    \begin{pmatrix}
        1 & 1 & 1 & 1 & 1 \\
        k_{03}+k_{13}+k_{32}
            & k_{01}+k_{03}+k_{13}+k_{21}
            & k_{01}+k_{21}+k_{32}
            &
          k_{03}+k_{13}+k_{32} 
        & k_{01}+k_{21}+k_{32} 
            \\
        0 & 0 & 1 & 0 & 1  \\
        1 & 0 & 0 & 0 & 0 \\
        k_{03} k_{32}
        &
        k_{01}(k_{03}+k_{13}) + k_{03}k_{21}
        &
        k_{01}k_{32}
        &
        k_{32}(k_{03}+k_{13}) 
        &
        (k_{01}+k_{21})k_{32}
\\
    \end{pmatrix}~.
    \end{align*}  
In the above matrix, we subtract column-$1$ from column-$4$, and column-$3$ from column-$5$.  This yields a matrix with the following lower-triangular block structure:
    \begin{align} \label{eq:matrix-jacobian-example}
    \left( \begin{array}{ccc|cc}
        1 & 1 & 1 & 0 & 0 \\
        k_{03}+k_{13}+k_{32}
            & k_{01}+k_{03}+k_{13}+k_{21}
            & k_{01}+k_{21}+k_{32}
            &
            0
            &0
                \\
        0 & 0 & 1 & 0 & 0  \\
        \hline 
        1 & 0 & 0 & -1 & 0 \\
        k_{03} k_{32}
        &
        k_{01}(k_{03}+k_{13}) + k_{03}k_{21}
        &
        k_{01}k_{32}
        &
        k_{32}k_{13} 
        &
        k_{21}k_{32}
\\
    \end{array} \right)~.
    \end{align}  
This block matrix makes it straightforward to confirm that the Jacobian matrix of $\mathsf{d}$ is full rank.  This confirms, via Proposition~\ref{prop:rank-matrix-for-identifiability} and Lemma~\ref{lem:modify-coeffs-by-division}, what we saw earlier: this model is generically locally
identifiable.
\end{example}

In Example~\ref{ex:ideas-in-big-proof}, we performed several steps.  First, we modified the coefficient map (by reordering the coefficients and replacing one coefficient by a quotient of two coefficents).  Next, 
we performed column operations on the  Jacobian matrix of the 
modified coefficient map.  Finally, we observed that the resulting matrix is lower triangular.  Versions of these steps are the key ideas in the proof of the following result; for instance,  the general block matrix appears below in~\eqref{eq:jac-after-col-opers}.

\begin{proposition} \label{prop:leak-interlacing}
If $\mathcal{M}$ is a directed-cycle model 
that is leak-interlacing, then $\mathcal{M}$ is generically locally identifiable.
\end{proposition}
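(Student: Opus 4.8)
The plan is to verify the rank criterion of Proposition~\ref{prop:rank-matrix-for-identifiability}: I must show that the Jacobian of the coefficient map of a leak-interlacing model $\mathcal{M}$ attains generic rank $|E|+|\Leak| = n+z$, where $z:=|\Leak|$. I would first dispose of $z\le 1$ by citing Proposition~\ref{prop:cycle-1-in-1-out}(1). For $z\ge 2$, the strategy is to reduce to the \emph{minimally} leak-interlacing case of Definition~\ref{def:minimal}. When $\mathcal{M}$ is not in the exceptional family, Lemma~\ref{lem:minimal} produces a minimally leak-interlacing submodel $\mathcal{M}'=(G,\In',\Out',\Leak)$ obtained by deleting inputs and/or outputs, and Proposition~\ref{prop:add-in-out} then transfers identifiability of $\mathcal{M}'$ back up to $\mathcal{M}$. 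Thus the whole burden reduces to proving that \emph{minimally leak-interlacing models are identifiable}, together with a separate treatment of the exceptional family.

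For the core statement, note that in a minimally leak-interlacing model the $z$ cyclic gaps between consecutive leaks each contain exactly one input or output, so $|\In'|+|\Out'|=z$ and the parameter count is exactly $n+z$. I would select $n+z$ coefficients from Proposition~\ref{prop:specific-coefficients}: the $n-1$ symmetric coefficients $e_1,\dots,e_{n-1}$ (Type~I), the single Type~II coefficient $e_n-\prod_i k_{i+1,i}$, and $z$ path coefficients of the forms $\kappa(i,j)$ and $e_1^*(i,j)\kappa(i,j)$, one associated to each gap. Following the template of Example~\ref{ex:ideas-in-big-proof}, I would use Lemma~\ref{lem:modify-coeffs-by-division} to replace each chosen $e_1^*(i,j)\kappa(i,j)$ by the quotient $e_1^*(i,j)$ without changing the generic Jacobian rank, and then perform column operations (subtracting edge-parameter columns from leak-parameter columns) to bring the $(n+z)\times(n+z)$ Jacobian into block-lower-triangular form as in~\eqref{eq:matrix-jacobian-example}. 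The intended split is an $n\times n$ \emph{edge block}, whose columns are the edge parameters $k_{i+1,i}$ and whose rows are $e_1,\dots,e_{n-1}$ together with one $e_1^*$ row, and a $z\times z$ \emph{leak block}, whose columns are the leak parameters $k_{0\ell}$ and whose rows are $e_n-\prod_i k_{i+1,i}$ together with the remaining $z-1$ path coefficients.

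It then remains to show each diagonal block is generically nonsingular. Since $\partial w_i/\partial k_{i+1,i}=1$ for the variables $w_i$ underlying the $e_j$ (namely $w_i=k_{i+1,i}$ for $i\notin\Leak$ and $w_i=k_{i+1,i}+k_{0i}$ for $i\in\Leak$), the $e_j$-rows of the edge block coincide with the Jacobian of the elementary symmetric polynomials in the $w_i$, while $e_1^*(i,j)$ is a partial sum of the $w_i$; hence the set $\{e_1,\dots,e_{n-1},\,e_1^*(i,j)\}$ is algebraically independent by Lemma~\ref{lem:alg-indep}(2), so the edge block is nonsingular by Lemma~\ref{lem:alg-indep-general}. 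For the leak block I would compute entries using Lemma~\ref{lem:partial-deriv} (notably $\partial\kappa/\partial k_{0\ell}=0$, so the pivots arise only after the column operations, via $\partial\kappa/\partial k_{\ell+1,\ell}$); here the leak-interlacing hypothesis is exactly what forces nonsingularity, since an input or output separating consecutive leaks is what gives each leak parameter a distinct pivot—precisely the mechanism that fails in the non-interlacing dependence of Lemma~\ref{lem:unIdentifiable-before-output}.

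I expect the main obstacle to lie entirely in this core computation, and to be twofold. First is the bookkeeping: choosing, for each of the $z$ gaps, the correct path coefficient and the correct row/column ordering so that the column operations genuinely clear the off-diagonal block depends delicately on the cyclic positions of inputs, outputs, and leaks, and this is where minimal interlacing is used. Second, the exceptional family must be handled by hand, because such models have exactly two leaks yet need not possess any identifiable one-input/one-output submodel, so Lemma~\ref{lem:minimal} does not apply; here I would run a direct full-rank argument, adapting the same block-triangular template to the extra input or output that non-exceptionality forces, in order to close the proof.
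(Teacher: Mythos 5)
Your proposal takes essentially the same route as the paper's proof: the same reduction to minimally leak-interlacing models via Lemma~\ref{lem:minimal} and Proposition~\ref{prop:add-in-out}, the same $(n+z)\times(n+z)$ block-triangular Jacobian with edge block $\{e_1,\dots,e_{n-1},\,e_1^*(i,j)\}$ (handled by Lemmas~\ref{lem:modify-coeffs-by-division}, \ref{lem:alg-indep}(2), and~\ref{lem:alg-indep-general}) and leak block built from the Type~II coefficient and $z-1$ path coefficients $\kappa(i,j)$ (handled via Lemma~\ref{lem:partial-deriv}), and the same separate direct treatment of the exceptional family, which the paper resolves by splitting into the two-output and two-input subcases. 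The bookkeeping you flag as the main obstacle is indeed where the paper's proof does its real work, but your outline is faithful to it.
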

\begin{proof}
Assume that $\mathcal{M}=
(G, \In, \Out, \Leak)$ is a leak-interlacing, directed-cycle model with $n$ compartments.  If $\lvert \Leak \rvert \leq 1$, then Proposition~\ref{prop:cycle-1-in-1-out}~(1) implies that $\mathcal{M}$ is generically locally identifiable.  So, for the remainder of the proof, assume that $\lvert \Leak \rvert \geq 2$.
We consider two cases, based on whether or not $\mathcal{M}$ is in the exceptional family (Definition~\ref{def:family}).

{\bf Case 1:} $\mathcal{M}$ is \uline{not} in the exceptional family.
In this case, 
Lemma~\ref{lem:minimal} implies that there exists a minimally leak-interlacing model  $\widetilde{\mathcal{M}}$ (with at least one input and at least one output) that is obtained from $\mathcal{M}$ by removing input(s) and/or output(s) (or removing none if $\mathcal{M}$ is already minimally leak-interlacing). 
Additionally, no compartment of $\widetilde{\mathcal{M}}$ is both an input and an output.

Some facts noted in the prior paragraph -- namely, $\widetilde{\mathcal{M}}$ has at least one input and at least one output, and has no compartment that has both an input and an output -- allow us to relabel the compartments, if needed, so that 
$1$ is an input compartment
and also that there are ``no inputs after the final output''.
More precisely, this relabeled model, which we denote by 
$\widetilde{\mathcal{M}}'
= (G, \widetilde{\In}', \widetilde{\Out}', \widetilde{\Leak}')
$,
satisfies the following:
    \begin{enumerate}
        \item
        $\widetilde{\In}' = \{i_1=1, i_2, ..., i_x\}$, where $2 \leq i_2< \dots < i_x$ if $x \geq 2$ (if $x \geq 1$, then $\widetilde{\In}' = \{i_1=1\}$),   
    \item
    $\widetilde{\Out}' = \{p_1, p_2, ..., p_y\}$, for some $2 \leq p_1 < \dots < p_y \leq n$, with $y \geq 1$, and
    \item
    $i_x \leq p_y - 1$.
    \end{enumerate}
Write the leak set as $\widetilde{\Leak}' = \{\ell_1, \ell_2, \dots , \ell_z\}$, where 
    $1 \leq \ell_1 < \ell_2 < \dots < \ell_z \leq n$.  
We know that 
$z = x+y \geq 2$,
because $\widetilde{\mathcal{M}}'$ is minimally leak-interlacing and has at least two leaks.

Let $J$ denote the Jacobian matrix of the coefficient map of $\widetilde{\mathcal{M}}'$. 
Let $\widetilde J$ denote the matrix obtained from $J$ by the following column operations: for all $\ell \in \widetilde{\Leak}'$, we replace 
the column 
$\vec{J}_{0\ell}$ 
by 
$ \vec{J}_{k_{0\ell}} - \vec{J}_{k_{\ell + 1, \ell}} $, 
where (like in earlier proofs) 
$\vec{J}_{0\ell}$ and
$\vec{J}_{k_\ell + 1, \ell}$, 
denote the columns of $J$ that correspond to 
the parameters
 $k_{0\ell}$ and 
$ k_{\ell + 1, \ell}$ 
 (respectively).
    To show that $\widetilde{\mathcal{M}}'$ is generically locally
identifiable, it suffices (by Proposition~\ref{prop:rank-matrix-for-identifiability} and elementary linear algebra) to show that an $(n+z)\times (n+z)$ submatrix of $\widetilde J$ has nonzero determinant. 

    We begin constructing our $(n+z)\times (n+z)$ submatrix of $\widetilde J$, which we call $M$, as follows.  
    The first $n$ columns correspond to the edge-parameters $k_{21}, k_{32}, \dots , k_{1 n}$, and the remaining $z$ columns  
    are the aforementioned
    ``column-operated'' ones (that is, the columns $ \vec{J}_{k_{0\ell}} - \vec{J}_{k_{\ell + 1, \ell}} $, for $\ell \in \widetilde{\Leak}'$), in some order to be specified later in the proof.
    
    As for the rows of $M$, 
    these essentially correspond to coefficients of $\widetilde{\mathcal{M}}'$, as given in Proposition~\ref{prop:specific-coefficients}.  
    The first $n$ rows correspond to the coefficients $e_1,e_2, ..., e_{n-1},$
    and $ e_1^{*}(i, j)$, for some $i \in \widetilde{\In}'$ and $j \in \widetilde{\Out}'$ which will be specified later in the proof.
    The next $z-1$ rows correspond to a subset of the coefficients of the form $\kappa(i,j)$, with $i \in \widetilde{\In}'$ and $j \in \widetilde{\Out}'$ (this choice of coefficients will be specified later). 
    Finally, the last row of $M$ corresponds to the coefficient $e_n - \prod^n_{i=1}k_{i+1, i}$.

    The plan for the remainder of the proof (for Case~1) is to show that 
    the matrix $M$ has the following block-diagonal form\footnote{For the model in Figure~\ref{fig:cycle-running-example}, the corresponding block-diagonal matrix was shown earlier in~\eqref{eq:matrix-jacobian-example}.}:
    \begin{align} 
    \label{eq:jac-after-col-opers}
    M \quad = \quad 
    \left(
    	\begin{array}{ccc|cccc}
    	&&&    0 &  0 &\hdots&0\\	
    	& A & &  \vdots & \vdots& \ddots & \vdots   \\
    	&&&    0  & 0 &\hdots&0 \\	
        \hline
        &&&& &&\\
        &C&&&& D &\\
        &&&& &&\\
    	\end{array}
    \right)~,
    \end{align}
where $A$ is an $n \times n$ matrix, $D$ is a $z \times z$ matrix, and both $A$ and $D$ have nonzero determinant.
Specifically, we proceed as follows.  We first define the rows and columns of $D$, and then prove that $D$ has nonzero determinant.  Next, we specify the last row of $A$, and then prove that $A$ also has nonzero determinant.  Finally, we show that the upper-right 
submatrix of $M$ consists of zeroes.

We begin with the matrix $D$.  
Recall that 
$\widetilde{\mathcal{M}}' $
is minimally leak-interlacing, and that there are no inputs after the final output.  Therefore, the inputs, outputs, and leaks ``interlace'' as follows\footnote{For the model in Figure~\ref{fig:cycle-running-example}, the corresponding inequalities are $i_1=1 \leq \ell_{11}=1 < p_{11}=2 \leq \ell_{12}=3$.}: 
\begin{align} \label{eq:interlace-proof}
    \notag
    i_1=1 & \leq \ell_{11} < p_{11} \leq \ell_{12} < p_{12} \leq \dots \leq \ell_{1,j_1} < p_{1,j_1} \leq \ell_{1,j_1+1}< \\
    \notag
    i_2 & \leq \ell_{21} < p_{21} \leq \ell_{22} < p_{22} \leq \dots \leq \ell_{2,j_2} < p_{2,j_2} \leq \ell_{2,j_2+1}< \\
    & \dots < 
    \\
    \notag
    i_{x-1} 
    & \leq \ell_{x-1,1} < p_{x-1,1} \leq \ell_{x-1,2} < p_{x-1,2} \leq \dots \leq \ell_{x-1,j_{x-1}} < p_{x-1,j_{x-1}} \leq \ell_{x-1,j_{x-1}+1}< 
    \\
    \notag
    i_{x} 
    & \leq \ell_{x1} < p_{x1} \leq \ell_{x2} < p_{x2} \leq \dots \leq \ell_{x,j_{x}} < p_{x,j_{x}} \leq \ell_{x,j_{x}+1} \leq n
\end{align}
The last output and leak are, respectively, $p_{x,j_{x}} = p_y$ and $\ell_{x,j_{x}+1}=\ell_z$.  Additionally, 
the first leak after each input 
necessarily exists (i.e., the leaks $\ell_{11},\ell_{21},\dots, \ell_{x1}$), because the model is minimally leak-interlacing.  However, the outputs and leaks ``after'' those first leak in a given row might not exist;
for instance, if $j_1=0$, then the outputs and leaks labeled by 
$p_{11} ,~ \ell_{12} ,~ p_{12} ,~ \dots ,~ \ell_{1,j_1} ,~ p_{1,j_1} ,~ \ell_{1,j_1+1}$ in the first row of 
\eqref{eq:interlace-proof} do not exist.

Before specifying the rows of $D$, we need some notation.  For any output $p \in \widetilde{\Out}'$ that is not the last output (i.e., $p \neq p_y$), let $\nu(p)$ denote the ``next'' output, that is, $\nu(p)=p_{i+1}$ if $p=p_i$.

We choose the $z$ rows of $D$ to correspond to the following (order of) coefficients, which can be viewed as starting from the bottom line 
of~\eqref{eq:interlace-proof}
and then moving up one line at a time:
    \begin{enumerate}
        \item $
        \kappa(i_x,  p_{x1} ) ,~
        \kappa(i_x, p_{x2} ) ,~
        \dots
        ,~
        \kappa(i_x, p_{x,j_{x}}) 
            $
        \item 
        $
        \kappa(i_{x-1},  p_{{x-1},1} ) ,~
        \kappa(i_{x-1}, p_{{x-1},2} ) ,~
        \dots
        ,~
        \kappa(i_{x-1}, p_{{x-1}, j_{{x-1}}}),~
        \kappa(i_{x-1}, p_{x1})
        $
        \\
        $\vdots$ 
        \item [$(x{-}1)$] 
                $
        \kappa(i_2,  p_{21} ) ,~
        \kappa(i_2, p_{22} ) ,~
        \dots
        ,~
        \kappa(i_2, p_{2, j_{2}}),~
        \kappa(i_2, \nu(p_{2, j_{2}}))
        $, 
        \item [$(x)$] 
                $
        \kappa(i_1,  p_{11} ) ,~
        \kappa(i_1, p_{12} ) ,~
        \dots
        ,~
        \kappa(i_1, p_{1, j_{1}}),~
        \kappa(i_1, \nu(p_{1, j_{1}}))
        $,
        \item[$(x{+}1)$] $e_n - \prod^n_{i=1}k_{i+1, i}$ (this coefficient was specified earlier in the proof).
    \end{enumerate}
At least one coefficient exists in each of the above lists, namely, the coefficients 
$\kappa(i_x,  p_{x1} )$, 
$\kappa(i_{x-1}, p_{x1})$, 
\dots,
$\kappa(i_2, \nu(p_{2, j_{2}}))$, 
$\kappa(i_1, \nu(p_{1, j_{1}}))$, 
$e_n - \prod^n_{i=1}k_{i+1, i}$.

We choose the $z$ ``column-operated'' columns of $D$ to correspond to the following order of the $z$ leaks, which (like above) can be viewed as 
proceeding from the bottom line to the top of~\eqref{eq:interlace-proof}:
    \begin{enumerate}
        \item 
          $\ell_{x1} ,~  \ell_{x2}  ,~ \dots  ,~ \ell_{x,j_{x}} $
        \item $\ell_{x-1,1} ,~ \ell_{x-1,2} ,~ \dots ,~ \ell_{x-1,j_{x-1}} ,~ \ell_{x-1,j_{x-1}+1}$
        \\
        $\vdots$ 
        \item [$(x{-}1)$] 
            $\ell_{21} ,~ \ell_{22}  ,~ \dots  ,~ \ell_{2,j_2}  ,~ \ell_{2,j_2+1}$ 
        \item [$(x)$] 
            $\ell_{11} ,~ \ell_{12} ,~ \dots ,~ \ell_{1,j_1} ,~ \ell_{1,j_1+1}$
        \item[$(x{+}1)$] 
            $\ell_{x,j_{x}+1}= \ell_z$.
    \end{enumerate}
The first parameter above, $\ell_{x1}$, labels the unique leak along the path 
(in the sense of Remark~\ref{rem:path}) 
from input $i_x$ to output $p_{x1}$, and this path  corresponds to the coefficient $\kappa(i_x,  p_{x1} )$ chosen for the first row of $D$.  Similarly, the next leak, $\ell_{x2}$, is the right-most leak along the path from input $i_x$ to output $p_{x2}$, which corresponds to the coefficient $\kappa(i_x,  p_{x2})$ defining the second row of $D$. This pattern continues (by construction).

In particular, the rows and columns of $D$ are ordered so that each path (arising from some row of $D$) contains the leak of the corresponding column of $D$ and contains leaks of some columns to the left, but none to the right.  Therefore, by applying 
Lemma~\ref{lem:partial-deriv}, 
we conclude that the matrix $D$ is lower-triangular, and the first $z-1$ main-diagonal entries are nonzero.  As for the last diagonal entry (the lower-right entry of $D$), it is straightforward to check that this entry is $-\frac{ k_{21} k_{32} \dots k_{n,n-1} k_{1n} }{k_{\ell_z+1,\ell_z}}$ 
(and so is nonzero).
We conclude that $D$ has nonzero determinant.  

Before moving on to the matrix $A$, we state a claim that asserts that $p_{x1}$ (the output immediately after the last input, $i_x$) can not also be only one compartment ``behind'' $i_x$ (in Case~1).

{\bf Claim:} In the inequalities~\eqref{eq:interlace-proof}, we have $p_{x1} \neq i_x - 1$ (mod $n$).

We prove this claim as follows.  
Assume for contradiction the equality $p_{x1} = i_x - 1$ (mod $n$).  
It is straightforward to check from the inequalities~\eqref{eq:interlace-proof} that we must have $x=1$ ($\widetilde{\mathcal{M}}'$ has only one input) and $p_{x1}=n$ ($\widetilde{\mathcal{M}}'$ has only one output), and the inequalities~\eqref{eq:interlace-proof} reduce as follows:
\begin{align*}
    1 = i_x \leq \ell_{x1} < p_{x1}=n = \ell_{z}~.
\end{align*}
It follows (by Definition~\ref{def:exceptional-model}) that $\widetilde{\mathcal{M}}'$ is an exceptional model.  Thus, as 
$\widetilde{\mathcal{M}}$
is obtained from $\widetilde{\mathcal{M}}'$ by relabeling and/or adding input(s) and/or output(s), we conclude that $\mathcal{M}$ is in the exceptional family.  This contradicts our assumption for Case~1, and concludes our proof of the claim.

   Our next aim is to finish constructing the $n \times n$ matrix $A$, and then to show that $A$ has nonzero determinant. 
    Only the last row of $A$ remains to be specified; we pick it to correspond to the coefficient
    $e_1^*(i_x, p_{x1})$, as in Proposition~\ref{prop:specific-coefficients}. 
    The fact that this coefficient exists follows from our claim above and Proposition~\ref{prop:specific-coefficients}(3).
    (Technically, 
     $e_1^*(i_x, p_{x1}) \kappa(i_x, p_{x1})$ is a coefficient of the coefficient map, and $e_1^*(i_x, p_{x1})$ is not, 
     but Lemma~\ref{lem:modify-coeffs-by-division} implies that it is permissible to replace the former term by the latter, as $\kappa(i_x, p_{x1})$ 
     is a coefficient.) 
   
   Let $\widetilde A$ denote the matrix obtained from $A$, after setting all leak-parameters to zero: $k_{0\ell_1}=k_{0\ell_2} =\jordy{\cdots}=k_{0 \ell_z}=0$.
    By construction, $\widetilde A$
     is the Jacobian matrix (with respect to $k_{21}, k_{32}, \dots, k_{n,n-1}, k_{1n}$) of the following polynomials (which are obtained from $e_1,e_2,\dots, e_{n-1}$, and $ e_1^{*}(i_x, p_{x1})$ by setting $k_{0\ell_1}=k_{0\ell_2} =\jordy{\cdots}=k_{0 \ell_z}=0$): 
     
    \begin{align} \label{eq:restricted-coeffs}
        & e_1|_{k_{0\ell_1} =k_{0\ell_2} =\cdots=k_{0 \ell_z}=0}~,~
        e_2|_{k_{0\ell_1}=k_{0\ell_2} =\cdots=k_{0 \ell_z}=0}~,~
        \dots 
        e_{n-1}|_{k_{0\ell_1}=k_{0\ell_2} =\cdots=k_{0 \ell_z}=0}~,~
                \\        \notag
        & \quad  e_1^{*}(i_x, p_{x1})|_{k_{0\ell_1}=k_{0\ell_2} =\cdots=k_{0 \ell_z}=0} ~.
         \end{align}
    In turn, the polynomials in~\eqref{eq:restricted-coeffs} can be rewritten as follows:         
    \begin{align} \label{eq:restricted-coeffs-rewritten}
         \widetilde{e_1},~
        \widetilde{e_2},~
        \dots,~
        \widetilde{e_{n-1}},~
        k_{p_{x_1}+2,p_{x_1}+1} + 
        k_{p_{x_1}+3,p_{x_1}+2} +
         \dots +  
         k_{i_x, i_x-1} ~,
    \end{align}
    where $\widetilde{e_i}$ is the $i^{th}$ elementary symmetric polynomial on the  edge-parameter  set $Edge$ $:=$ $\{ k_{21}, k_{32},$ $\dots, k_{n,n-1}, k_{1n} \}$. 
The last expression in the list~\eqref{eq:restricted-coeffs-rewritten} is a sum 
of parameters corresponding to
a proper, nonempty subset of $Edge$ (for instance, $k_{i_x, i_x-1}$ is a summand of that term, but not $k_{p_{x1}+1,p_{x1}}$).  
It follows that Lemma~\ref{lem:alg-indep} applies to the $n$ polynomials in~\eqref{eq:restricted-coeffs-rewritten}, and so we conclude that these polynomials are algebraically independent over $\mathbb C$. Hence, Lemma~\ref{lem:alg-indep-general} implies that
    $\det \widetilde{A} \neq 0$.  We conclude that $\det A \neq 0$, as desired.

Finally, we explain how the zeroes in the upper-right $n \times z$ submatrix of $M$ arise.  
 The zeroes in the first $n-1$ rows are due to Lemma~\ref{lem:partial-deriv-1-input}(I).
 
 As for the last row, this corresponds to the coefficient $e_1^*(i_x, p_{\beta})$.  For all $\ell \in \widetilde{\Leak}'$, either (i) $k_{\ell+1,\ell} + k_{0\ell}$ is a summand of $e_1^*(i_x, p_{\beta})$ (and no other summand involves $k_{\ell+1,\ell}$ or $ k_{0\ell}$) 
 or (ii) no summand of $e_1^*(i_x, p_{\beta})$ involves $k_{\ell+1,\ell}$ or $ k_{0\ell}$.  We conclude that the last row is the zero row. 

Thus, $M$ has nonzero determinant, and so 
(by Proposition~\ref{prop:rank-matrix-for-identifiability})
the model $\widetilde{\mathcal{M}}'$ (and also $\widetilde{\mathcal{M}}$) is generically locally
identifiable.  Hence, by Proposition~\ref{prop:add-in-out}, $\mathcal{M}$ also is generically locally
identifiable.

{\bf Case 2:} $\mathcal{M}$ is in the exceptional family.
 In this case, by definition, $\mathcal{M}$ is obtained by adding at least one input and/or at least one output to an exceptional model.
 Recall that exceptional models have exactly one input and one output (Definition~\ref{def:exceptional-model}). 
 Thus, $\mathcal{M}$
has at least two inputs and/or at least two outputs.

Relabel the compartments of ${\mathcal{M}}$ so that there is an input in compartment-$1$, an output in compartment-$n$, and a leak in compartment-$n$ (this relabeling can be easily seen from the exceptional model, before adding inputs and/or outputs to obtain $\mathcal{M}$).  
Letting $\widetilde {\mathcal{M}} = (\In, \Out, \Leak)$ denote the resulting model, we have: 
\begin{enumerate}
    \item $\In = \{i_1=1, i_2, ..., i_x\}$, where $2 \leq i_2 < \dots < i_x \leq n$ if $x \geq 2$ (if $x \geq 1$, then $\In = \{i_1=1\}$), 
    \item $\Out = \{p_1, p_2, ..., , p_y= n\}$, 
where 
$1 \leq p_1 < p_2 <\dots  < p_y=n$, with $y \geq 1$, and
    \item $\Leak = \{\ell, n\}$, 
where $1 \leq \ell \leq n-1$.
\end{enumerate}

Let $J$ denote the Jacobian matrix of the coefficient map of $\widetilde{\mathcal{M}}$. 
Like in Case~1, we let $\widetilde J$ denote the matrix obtained from $J$ by replacing
the columns 
$\vec{J}_{0\ell}$ and
$\vec{J}_{0n}$ by (respectively) 
$ \vec{J}_{k_{0\ell}} - \vec{J}_{k_{\ell + 1, \ell}} $ and
$ \vec{J}_{k_{0n}} - \vec{J}_{k_{1n}} $, 
where 
$\vec{J}_{0\ell}$, 
$\vec{J}_{k_\ell + 1, \ell}$, 
$\vec{J}_{0n}$, and
$\vec{J}_{k_{1 n}}$ denote the columns of $J$ that correspond to 
the parameters
 $k_{0\ell}$, 
$ k_{\ell + 1, \ell}$, 
$k_{0n}$,
and $k_{1n}$ (respectively).
We reorder the columns, if needed, so that the two modified columns ($ \vec{J}_{k_{0\ell}} - \vec{J}_{k_{\ell + 1, \ell}} $ and
$ \vec{J}_{k_{0n}} - \vec{J}_{k_{1n}} $) are the right-most columns of $\widetilde J$.

Our next aim is to choose $n+2$ rows of $\widetilde J$, with the plan of showing that the resulting (square) submatrix has nonzero determinant.  
The rows that we will choose essentially correspond to coefficients of $\widetilde{\mathcal{M}}$ listed in Proposition~\ref{prop:specific-coefficients}. 
We take the first $n-1$ rows to correspond to the coefficients 
$e_1, e_2, \dots, e_{n-1}$.  

For the moment, we skip the $n$-th row, and instead we choose the remaining two rows to correspond to the coefficients 
$\kappa(1,n)=k_{21}k_{32}\dots k_{n,n-1}$
and $e_n - \prod^n_{i=1}k_{i+1, i}= e_n - k_{21} k_{32} \dots k_{n,n-1} k_{1n}$. 

Returning now to the $n$-th 
row, we consider two subcases:

{\bf Subcase A:} $\widetilde {\mathcal{M}}$ has at least two outputs.  
In this case, the output $p_1$ exists.  We choose the $n$-th row to correspond to $e_1^{*}(1, p_1)$.

{\bf Subcase B:} $\widetilde {\mathcal{M}}$ has at least two inputs.  In this case, 
the input $i_2$ exists.  
We choose the $n$-th row to correspond to 
$e_1^{*}(i_2, n)$. 

Much like in Case~1, the term 
$e_1^{*}(1, p_1)$ 
(respectively, $e_1^{*}(i_2, n)$ in Subcase~B) 
is not a coefficient of the coefficient map.  However,   
$e_1^{*}(1, p_1) \kappa(1, p_1)$ 
and $\kappa(1, p_1)$ 
(respectively, 
$e_1^{*}(i_2, n) \kappa(i_2, n)$)
and $\kappa(i_2, n)$)
are coefficients
(by Proposition~\ref{prop:specific-coefficients}).  Hence, by Lemma~\ref{lem:modify-coeffs-by-division}, the modification we performed on the coefficients is permitted.

We have defined a square submatrix of $\widetilde J$; denote this submatrix by $M$.
Our next aim is to show that $M$ has the same block-form as the matrix (also called $M$) in~\eqref{eq:jac-after-col-opers}, from Case~1. 
In particular, we show that $M$ consists of blocks $A$, $C$, and $D$
of size $n \times n$, $2 \times n$, and $2 \times 2$ (respectively), as in~\eqref{eq:jac-after-col-opers}. 
(The matrix $D$ is $2 \times 2$, because $\widetilde {\mathcal{M}}$ has exactly two leaks.)

We first show that the $n \times n$ upper-left submatrix of $M$, which we call $A$, has nonzero determinant.  The proof closely mirrors that of Case~1 (for the matrix called $A$ in that case).  As before, we let $\widetilde A$ denote the matrix obtained from $A$ after setting both leak parameters to zero ($k_{0\ell}=k_{0n}=0$).  
As in~\eqref{eq:restricted-coeffs-rewritten}, $\widetilde A$ is the Jacobian matrix  of the following:
    \begin{align} \label{eq:restricted-coeffs-rewritten-case-2}
        \widetilde{e_1},~
        \widetilde{e_2},~
        \dots,~
        \widetilde{e_{n-1}},~
         \alpha ~,
    \end{align}
    where $\widetilde{e_i}$ is the $i$-th elementary symmetric polynomial on $Edge=\{ k_{21}, k_{32}, \dots, k_{n,n-1}, k_{1n} \}$ and 
    \begin{align*}
        \alpha ~:= ~
        \begin{cases}
            k_{p_{1}+2,p_{1}+ 1} + k_{p_{1}+3,p_{1}+2} + \dots +  k_{n,n-1} + k_{1n}
                    & \mathrm{if~Subcase~A~(so,~}1 \leq p_1 \leq n-1\mathrm{)} \\
            k_{21} + k_{32}+ \dots + k_{i_2,i_2-1}
                    & \mathrm{if~Subcase~B~(so,~}2 \leq i_2 \leq n\mathrm{)} \\
        \end{cases}
    \end{align*}
In either subcase, $\alpha$ is a sum of a proper, nonempty subset of $Edge$, and so (like in Case~1) Lemmas~\ref{lem:alg-indep-general} and~\ref{lem:alg-indep} apply to the $n$ polynomials in~\eqref{eq:restricted-coeffs-rewritten-case-2}.
We conclude that
    $\det \widetilde{A} \neq 0$, and thus, $\det A \neq 0$.

Next, we show that the upper-right $n \times 2$ block of $M$, which we call $B$, is the zero matrix.  The fact that the first $n-1$ rows are $0$ 
 is due to Lemma~\ref{lem:partial-deriv-1-input}(I).
  
As for the last row of $B$, this corresponds to the coefficient $e_1^{*}(1, p_1)$ (in Subcase~A) or $e_1^{*}(i_2, n)$ (in Subcase~B).  The coefficient $e_1^{*}(1, p_1)$ is an elementary symmetric polynomial on a set $\Sigma$ that contains $k_{1n}+ k_{0n}$ (and no other element of the set involves $k_{1n}$ or $k_{0n}$); while the coefficient $e_1^{*}(i_2, n)$ 
 is an elementary symmetric polynomial on a set $\Sigma'$ in which no element involves $k_{1n}$ or $k_{0n}$.  Also, for each of the sets $\Sigma$ and $\Sigma'$, the set either 

contains $k_{\ell+1,\ell}+ k_{0 \ell}$ (and no other element of the set involves $k_{\ell+1,\ell}$ or $k_{0 \ell}$) 
or 
no element of the set involves $k_{\ell+1,\ell}$ or $k_{0 \ell}$.  
We conclude that the last row of $B$, and thus all of $B$, consists of zeroes.

Finally, it is straightforward to check that the $2 \times 2$ submatrix $D$ is lower triangular, as follows:
\begin{align*} 
D~=~
\left(
	\begin{array}{cc}
        k_{21} k_{32} \dots k_{\ell,\ell-1} k_{\ell+2,\ell} \dots k_{n,n-1} & 0 \\
        * & -k_{21} k_{32} \dots k_{n,n-1}\\
	\end{array}
\right)~.
\end{align*}

We conclude that $M$ has nonzero determinant, as desired.  Thus,  $\widetilde J$ has rank $n+2$, and so $J$ (the Jacobian matrix of the coefficient map of $\widetilde{\mathcal{M}}$) does too.
    It now follows (from Proposition~\ref{prop:rank-matrix-for-identifiability}) that the model $\widetilde{\mathcal{M}}$ -- and therefore $\mathcal{M}$ as well -- is generically locally
identifiable.
\end{proof}

\section{The singular locus of directed-cycle models} \label{sec:SL}
In the prior section, we 
proved that a directed-cycle model is generically locally identifiable if and only if it is leak-interlacing.  Recall that ``generically'' here means that identifiable models have a measure-zero set of parameter vectors for which the 
Jacobian matrix of the coefficient map is singular.  
This set was called the {\em singular locus} by Gross {\em et al.}, who analyzed the singular locus of several classes of directed-cycle models and bidirected-tree models~\cite{singularlocus}. 

\begin{definition} \label{def:singular-locus}
    Let $\mathcal{M} = (G, In, Out, Leak)$, where $G=(V,E)$, be a strongly connected linear compartmental model that is generically locally identifiable. 
    Let $\msc$ denote the coefficient map of~$\mathcal{M}$. 
    The \textbf{singular locus} of $\mathcal{M}$ is the subset of $\mathbb{R}^{|E| + |Leak|}$ at which the Jacobian matrix of $\msc$ is \uline{not} full rank.
\end{definition}

\begin{example}[Example~\ref{ex:ideas-in-big-proof}, continued] \label{ex:singular-locus}
We revisit the directed-cycle model in Figure~\ref{fig:cycle-running-example}, which has three compartments, with $In=\{1\}$, $Out=\{2\}$, and $Leak = \{1, 3\}$.  For this model, we saw in Example~\ref{ex:jacobian-matrix} that the $5 \times 5$ Jacobian matrix of the coefficient map
has the following determinant: $
k_{21}^2  k_{32} (k_{01} + k_{21} - k_{32})$.  It follows that the vanishing of this determinant defines the singular locus of this model; hence, the singular locus is the union of the following three hyperplanes:
\begin{enumerate}[label=(\alph*)]
    \item $\{k_{21}=0\}$,
    \item $\{k_{32}=0\}$, and
    \item $\{ k_{01} + k_{21} = k_{32} \}$.
\end{enumerate}
The existence of such hyperplanes within the singular locus is a more general phenomenon, as we will see in the next result.  In particular, 
the hyperplane in part~(1) (respectively, the hyperplanes in part~(2)) of Theorem~\ref{thm:singular-locus-cycle} below
corresponds to the hyperplane (a) (respectively, the hyperplanes~(a) and (b)).
Additionally, hyperplane (c) is one of the hyperplanes appearing later in Conjecture~\ref{conj:s-locus}. 
\end{example}
The remainder of this section is organized as follows.  Our results on the singular locus are stated in Section~\ref{sec:s-loc-results}, and the proofs are given in Section~\ref{sec:s-loc-proofs}.

\subsection{Results} \label{sec:s-loc-results}

The first main result of this section, as follows, concerns the singular locus of directed-cycle models. 

\begin{theorem}[Hyperplanes in the singular locus] \label{thm:singular-locus-cycle}
    Let $n \geq 3$, and let $\mathcal{M} = (G, \In, \Out, \Leak)$ be an $n$-compartment directed-cycle model with $\In=\{1\}$.  Assume that $\mathcal{M}$ is generically locally 
    identifiable (or, equivalently, is leak-interlacing). 
    Let $\mathcal{S}_{\mathcal{M}}$ denote the singular locus of $\mathcal M$. 
    \begin{enumerate}
        \item If there exists a leak $\ell \in \Leak \smallsetminus \{1\}$, then 
        $\mathcal{S}_{\mathcal{M}}$ contains 
        the hyperplane $\{k_{21}=0\}$.
        \item If 
        $\Leak = \{\ell_1,\ell_2,\dots,\ell_z \}$
        (with $1 \leq \ell_1 < \ell_2 < \dots < \ell_z \leq n$ and $z \geq 1$)
        and 
        $\ell_z \geq p$ for all $p \in \Out$, 
        then  
        $\mathcal{S}_{\mathcal{M}}$ contains 
        the hyperplanes $\{ k_{a+1,a}=0\}$ for all $a \in [n] \smallsetminus \{\ell_z \}$.
    
        \item  If 
        $\Out=\{p\}$ and $\Leak=\{q,p\}$ with $1 \leq q < p \leq n-1$, 
        then 
        $\mathcal{S}_{\mathcal{M}}$ contains 
         the hyperplane $\{k_{0q} + k_{q+1,q} = k_{0p} + k_{p+1,p}\}$.
        
    \end{enumerate}
\end{theorem}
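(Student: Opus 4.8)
The plan is to prove each containment by exhibiting, on the given hyperplane, a nontrivial linear dependence among the columns of the Jacobian matrix $J$ of the coefficient map of $\mathcal{M}$: by Proposition~\ref{prop:rank-matrix-for-identifiability} and Definition~\ref{def:singular-locus}, any point at which $J$ fails to have full column rank lies in the singular locus, so it suffices to show that two \emph{distinct} columns of $J$ coincide at every point of the hyperplane. Throughout, I will read off the entries of $J$ from the coefficients listed in Proposition~\ref{prop:specific-coefficients} (Types I--IV) and compute the relevant partial derivatives via Lemma~\ref{lem:partial-deriv-1-input}; when $\mathcal{M}$ has several outputs I will pass, through Proposition~\ref{prop:coeff-many-in-or-out}, to the one-input, one-output submodels $(G,\{1\},\{j\},\Leak)$ with $j\in\Out$, to which that lemma applies directly.

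For Parts (1) and (2) the two columns I equate are $\vec{J}_{k_{0\ell}}$ and $\vec{J}_{k_{\ell+1,\ell}}$ for a suitably chosen leak $\ell$. On any Type~I coefficient these columns agree identically by Lemma~\ref{lem:partial-deriv-1-input}(I); on the Type~II coefficient their difference is, up to sign, $\prod_{i\neq\ell}k_{i+1,i}$ by Lemma~\ref{lem:partial-deriv-1-input}(II); and on the Types~III and IV coefficients the differences are governed by Lemma~\ref{lem:partial-deriv-1-input}(III),(IV). For Part~(1) I take $\ell$ to be any leak with $\ell\neq1$ and work on $\{k_{21}=0\}$: since $\ell\neq1$, the factor $k_{21}$ divides $\prod_{i\neq\ell}k_{i+1,i}$, so the Type~II difference vanishes on the hyperplane, while for the Type~III and IV coefficients of each output $j$ the two entries agree on $\{k_{21}=0\}$ by Lemma~\ref{lem:partial-deriv-1-input}(III),(IV)---either because $k_{21}$ is a common factor forcing the relevant derivative to vanish, or because parts (IV)(a),(b) already equate the two entries. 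For Part~(2) I take $\ell=\ell_z$; the hypothesis $\ell_z\geq p$ for every $p\in\Out$ means $\ell_z$ is never interior to an input-to-output path, so Lemma~\ref{lem:partial-deriv-1-input}(III),(IV) makes the Type~III and IV entries of the two columns coincide automatically (each relevant derivative is $0$, or the two agree by part~(IV)(a)), whereas the Type~II difference $\prod_{i\neq\ell_z}k_{i+1,i}$ vanishes on $\{k_{a+1,a}=0\}$ exactly when $a\neq\ell_z$---precisely the range of $a$ in the statement.

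For Part~(3), where $\Out=\{p\}$ and $\Leak=\{q,p\}$ with $q<p$, the two columns I equate are the leak columns $\vec{J}_{k_{0q}}$ and $\vec{J}_{k_{0p}}$. On the Type~I and Type~II coefficients, Lemma~\ref{lem:partial-deriv-1-input}(I),(II), applied to the leak pair $q,p$, shows that the entrywise differences between these columns are scalar multiples of the single factor $k_{0p}+k_{p+1,p}-k_{0q}-k_{q+1,q}$, which is identically zero on the hyperplane $\{k_{0q}+k_{q+1,q}=k_{0p}+k_{p+1,p}\}$. On the Type~III coefficient $\kappa(1,p)$ and the Type~IV coefficients $e_1^*(1,p)\kappa(1,p)$, neither column has a nonzero entry: $\kappa$ carries no leak parameter (Lemma~\ref{lem:partial-deriv-1-input}(III)), and since both $q$ and $p$ lie at or before the output $p$, the derivatives $\partial[e_i^*\kappa]/\partial k_{0q}$ and $\partial[e_i^*\kappa]/\partial k_{0p}$ both vanish by Lemma~\ref{lem:partial-deriv-1-input}(IV)(b) (equivalently, $e_1^*(1,p)$ involves only edges and leaks strictly after compartment~$p$, and neither $q$ nor $p$ qualifies). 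Hence $\vec{J}_{k_{0q}}=\vec{J}_{k_{0p}}$ on the hyperplane.

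In each case the two coinciding columns correspond to genuinely distinct parameters, so their equality is a nontrivial dependence and $J$ is rank-deficient at every point of the hyperplane, placing the entire hyperplane inside the singular locus. I expect the main obstacle to be the Type~IV bookkeeping: Lemma~\ref{lem:partial-deriv-1-input}(IV) splits into several sub-cases according to whether the chosen leak lies before, at, or after each output, and for models with multiple outputs the column equality must hold simultaneously for the Type~III and IV coefficients arising from every output. Verifying that the stated hypotheses---$\ell\neq1$ in Part~(1), $\ell_z\geq p$ for all $p\in\Out$ in Part~(2), and $q<p\leq n-1$ in Part~(3)---are exactly what is needed to collapse all of these sub-cases to equality is the crux; the remaining Type~I--III computations are routine applications of the cited lemma.
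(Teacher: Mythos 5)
Your proposal is correct and follows essentially the same route as the paper's proof: reduce to one-input, one-output submodels via Proposition~\ref{prop:coeff-many-in-or-out}, then show on each hyperplane that the columns $\vec{J}_{k_{0\ell}}$ and $\vec{J}_{k_{\ell+1,\ell}}$ (Parts (1)--(2)) or the two leak columns $\vec{J}_{k_{0q}}$ and $\vec{J}_{k_{0p}}$ (Part (3)) coincide, checking the Type I--IV coefficients one by one via Lemma~\ref{lem:partial-deriv-1-input}. Your case analysis matches the paper's (including the observation that for Part (2) the Type IV entries agree either by vanishing or by Lemma~\ref{lem:partial-deriv-1-input}(IV)(a)), so the plan fills in to the published argument with only routine bookkeeping.
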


\noindent We prove Theorem~\ref{thm:singular-locus-cycle} in the next subsection.  The next example illustrates part~(3) of the theorem.

\begin{figure}[ht]
\begin{center}
\begin{tikzpicture}[
roundnode/.style={circle, draw=black, very thick, minimum size=10mm},
arrowbasic/.style={very thick, ->},
]

\node[roundnode](comp1){1};
\node[roundnode](comp2) [right=of comp1] {2};
\node[roundnode](comp3) [below=of comp2] {3};
\node[roundnode](comp4) [left=of comp3] {4};


\draw[arrowbasic] (comp1) -- (comp2) node[pos=.5, above] {\(k_{21}\)};
\draw[arrowbasic] (comp2) -- (comp3) node[pos=.5, right] {\(k_{32}\)};
\draw[arrowbasic] (comp3) -- (comp4) node[pos=.5, above] {\(k_{43}\)};
\draw[arrowbasic] (comp4) -- (comp1) node[pos=.5, left] {\(k_{14}\)};

\draw[arrowbasic] (-1,1) -- node[above right] {In} (comp1);

\draw(comp3) -- (3.5,-2.03);
\draw(3.6,-2.03) circle (0.1) node[right] {};

\draw[arrowbasic](comp1) -- node[above] {\(k_{01}\)} (-1.35,0);
\draw[arrowbasic](comp3) -- node[below] {\(k_{03}\)} (3.5,-2.75);
\end{tikzpicture}
\end{center}
    \caption{A leak-interlacing directed-cycle model 
    with $\In=\{1\}$, $\Out=\{3\}$, and $\Leak=\{1,{\color{blue} 3}\}$.}
    \label{fig:cycle-4-thm-singular-locus-3}
\end{figure}
\begin{example}\label{ex:singular-locus-thm-3}
Consider the directed-cycle model in Figure~\ref{fig:cycle-4-thm-singular-locus-3}. 
The Jacobian matrix $J$ of the coefficient map is 
a $6{\times}6$ matrix, so the singular locus is defined by the determinant of $J$:
\begin{equation*}
\det(J) ~=~ k_{14}k_{21}^{2}k_{32}^{3}(k_{32}-k_{43}-k_{03})(k_{21}-k_{43}+k_{01}-k_{03})(k_{21} - k_{32} + k_{01})~.
\end{equation*}
\noindent
In particular, the singular locus contains the hyperplane $\{k_{01} + k_{21} = k_{03} + k_{43}\}$, 
which is consistent with part~(3) of
Theorem~\ref{thm:singular-locus-cycle}.
\end{example}

For directed-cycle models with $\In = \Out =
\Leak = 
\{1\}$, 
part~(2) of 
Theorem~\ref{thm:singular-locus-cycle} implies that the singular locus contains the $n-1$ coordinate hyperplanes defined by, respectively, $k_{32}=0$, $k_{43}=0$, \dots, $k_{1n}=0$.  
The next result (Theorem~\ref{thm:1-1-p-singular-locus}) implies that 
an additional $\binom{n}{2}$ hyperplanes fill out the rest of the singular locus. Theorem~\ref{thm:1-1-p-singular-locus} generalizes a result of Gross {\em et al.}~\cite[Theorem~5.3]{singularlocus}; their result is the $\Leak = \{1\}$ case of ours, and our proof (which appears in the next subsection) generalizes theirs.

\begin{theorem}[Singular locus]
    \label{thm:1-1-p-singular-locus}
    Let $n \geq 3$. 
    For the $n$-compartment directed-cycle model $\mathcal{M} = (G, \In, \Out, \Leak)$ with $\In = \Out =\{1\}$ and  $\Leak = \{\ell\}$, where $1 \leq \ell \leq n$, 
    the singular locus is defined by the following equation:
    \begin{align}
    \label{eq:s-locus-in=out=1}        
    \prod_{i\in [n] \smallsetminus \{\ell \} } k_{i+1,i}
    \prod_{2 \leq i < j \leq n} (\widetilde{k}_{i} - \widetilde{k}_{j}) ~=~0~,
    \end{align}
where, for $i \in [n]$, 
$$
\widetilde{k}_{i} ~:=~
\begin{cases} 
    k_{i+1,i}  & \text { if } i \neq \ell
    \\
    k_{\ell+1,\ell}+k_{0 \ell}  & \text { if } i = \ell ~.   
    \end{cases}
$$
\end{theorem}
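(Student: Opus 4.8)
The plan is to specialize the coefficient-map formula of Lemma~\ref{lem:coefficient-map-cycle} to $\In=\Out=\{1\}$ (so $p=1$, whence $\kappa=1$) and $\Leak=\{\ell\}$ (so $z=1$), and then to analyze the rank of the associated Jacobian after a convenient change of variables. With $p=1$ the coefficient map has the constant Type~III coefficient $\kappa=1$ (which contributes a zero row to the Jacobian and may be discarded), the Type~I coefficients $e_1,\dots,e_{n-1}$, the single Type~II coefficient, and the Type~IV coefficients $e_1^*,\dots,e_{n-1}^*$. Writing $x_i:=\widetilde{k}_i$ (so that $E=\{x_1,\dots,x_n\}$ and $E^*=\{x_2,\dots,x_n\}$) and $t:=k_{0\ell}$, one checks that $e_j=e_j(x_1,\dots,x_n)$, that $e_j^*=e_j(x_2,\dots,x_n)$, and that the Type~II coefficient simplifies to $C:=e_n-\prod_{i=1}^n k_{i+1,i}=t\prod_{i\neq\ell}x_i$. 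The passage from $(k_{21},\dots,k_{1n},k_{0\ell})$ to $(x_1,\dots,x_n,t)$ is an invertible (unipotent) linear change of coordinates with constant nonzero Jacobian determinant, so by the chain rule the rank of the Jacobian of the coefficient map is unaffected, and it suffices to describe the singular locus in the new coordinates.

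The key observation is that, among all coefficients, only $C$ depends on $t$, with $\partial C/\partial t=\prod_{i\neq\ell}x_i$. Hence the $t$-column of the Jacobian $J$ is zero except for this single entry in the $C$-row. When $\prod_{i\neq\ell}x_i=0$ the entire $t$-column vanishes, so $J$ has rank at most $n<n+1$ and the point lies in the singular locus; this accounts for the factor $\prod_{i\in[n]\smallsetminus\{\ell\}}k_{i+1,i}$ in~\eqref{eq:s-locus-in=out=1}. When $\prod_{i\neq\ell}x_i\neq0$, the standard fact that a column with a unique nonzero entry contributes exactly one to the rank gives $\rank J=1+\rank J'$, where $J'$ is the Jacobian of $(e_1,\dots,e_{n-1},e_1^*,\dots,e_{n-1}^*)$ with respect to $(x_1,\dots,x_n)$. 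Thus $J$ has full rank $n+1$ exactly when $J'$ has rank $n$.

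The remaining task is to compute the rank of the $(2n-2)\times n$ matrix $J'$, and here the elementary relation $e_j=e_j^*+x_1e_{j-1}^*$ (with $e_0^*=1$) is the engine. I would perform row operations: subtracting $x_1$ times the $e_{j-1}^*$-row and then the $e_j^*$-row from each $e_j$-row turns the $e_j$-rows into $(e_{j-1}^*,0,\dots,0)$, while leaving the $e_j^*$-rows untouched (for $j=1$ the first step is vacuous since $e_0^*=1$ is constant). The resulting matrix is block-decomposed between column $1$ and columns $2,\dots,n$: the first block has rank exactly $1$ because $e_0^*=1\neq0$ and all of its entries $e_0^*,e_1^*,\dots,e_{n-2}^*$ sit in that single column, while the second block is precisely the $(n-1)\times(n-1)$ Jacobian of the full system of elementary symmetric polynomials $e_1^*,\dots,e_{n-1}^*$ in the $n-1$ variables $x_2,\dots,x_n$. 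By the classical evaluation of this Jacobian determinant as a Vandermonde product, it equals $\pm\prod_{2\leq i<j\leq n}(x_i-x_j)$, so $\rank J'=1+(n-1)=n$ precisely when $\prod_{2\leq i<j\leq n}(\widetilde{k}_i-\widetilde{k}_j)\neq0$.

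Combining the two cases shows that $J$ fails to have full rank exactly on the zero set of $\prod_{i\in[n]\smallsetminus\{\ell\}}k_{i+1,i}\cdot\prod_{2\leq i<j\leq n}(\widetilde{k}_i-\widetilde{k}_j)$, which is the content of~\eqref{eq:s-locus-in=out=1}. The main obstacle is carrying out the row reduction of the third paragraph correctly and recognizing the reduced lower-right block as the complete elementary-symmetric Jacobian in $x_2,\dots,x_n$; once that identification is made, the classical Vandermonde formula finishes the argument. Everything else is a direct specialization of Lemma~\ref{lem:coefficient-map-cycle} together with the single-nonzero-column rank fact, and taking $\ell=1$ recovers the result of Gross~{\em et al.}~\cite[Theorem~5.3]{singularlocus}.
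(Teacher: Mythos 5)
Your proposal is correct, and its skeleton matches the paper's proof: both specialize Lemma~\ref{lem:coefficient-map-cycle} to $p=1$ and discard the constant coefficient $\kappa=1$; both isolate the leak direction (your coordinate change $(x,t)$ is literally the paper's column operation, since $\partial_t\big|_{x}=\partial_{k_{0\ell}}-\partial_{k_{\ell+1,\ell}}$); both identify $\prod_{i\neq\ell}k_{i+1,i}$ as the unique nonzero entry of the modified leak column; and both finish with the Vandermonde evaluation of the Jacobian of the elementary symmetric polynomials in $\widetilde{k}_2,\dots,\widetilde{k}_n$. Where you genuinely diverge is the middle rank computation, and your version buys something. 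The paper keeps all $2n-1$ coefficient rows, displays the block form~\eqref{eq:jacobian-1-1-l}, and asserts that every $(n+1)\times(n+1)$ minor is a scalar multiple of $(\det M)\prod_{i\in[n]\smallsetminus\{\ell\}}k_{i+1,i}$ -- a claim that is stated rather than argued, and which is not obvious for minors that omit some of the $e_j^*$-rows (one really needs a rank argument, or squarefreeness plus the Nullstellensatz, to justify it). You instead row-reduce the $e_j$-rows against the $e_j^*$-rows via the identity $e_j=e_j^*+x_1e_{j-1}^*$, producing an honest block-diagonal decomposition and the exact pointwise formula $\rank J = 2+\rank\bigl(\mathrm{Jac}(e_1^*,\dots,e_{n-1}^*)\bigr)$ off the locus $\prod_{i\neq\ell}k_{i+1,i}=0$; this gives both inclusions of the singular-locus equality~\eqref{eq:s-locus-in=out=1} in one stroke, with no appeal to minors. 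The only imported ingredient is the classical Vandermonde formula for the Jacobian of a complete system of elementary symmetric polynomials, which is the same fact the paper cites from the proof of Gross \emph{et al.}; your proof is otherwise self-contained and, in my view, tighter than the one in the paper.
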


Before turning to the proofs of the above theorems, we state a conjecture concerning additional hyperplanes in the singular locus.
\begin{conjecture} \label{conj:s-locus}
    Let $n \geq 3$, and let $\mathcal{M} = (G, \In, \Out, \Leak)$ be an $n$-compartment directed-cycle model with $\In=\{1\}$ and $\Out=\{p\}$ (for some $1 \leq p \leq n$).  
Assume that $\mathcal{M}$ is generically locally
    identifiable (or, equivalently, is leak-interlacing). 
Then, for all $a \in ([p-1] \smallsetminus \Leak)$ and for all $\ell \in (\Leak \cap [p])$, the singular locus of $\mathcal M$ contains the hyperplane
$\{k_{\ell +1, \ell } + k_{0 \ell } = k_{a+1,a} \}$.
\end{conjecture}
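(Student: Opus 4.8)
The plan is to place the stated hyperplane inside the singular locus by exhibiting an explicit linear dependence among three columns of the Jacobian of the coefficient map that holds identically along the hyperplane; rank-deficiency then follows from Proposition~\ref{prop:rank-matrix-for-identifiability}. Since $\In=\{1\}$ and $\Out=\{p\}$, the model already has one input and one output, so its coefficient map is the explicit map of Lemma~\ref{lem:coefficient-map-cycle}, with coefficients of Types~I--IV as in Notation~\ref{notation:types-of-coefficients}. Writing $\wt{k}_i := k_{i+1,i}$ for $i\notin\Leak$ and $\wt{k}_i := k_{i+1,i}+k_{0i}$ for $i\in\Leak$, the Type~I and Type~II coefficients are symmetric functions of the $\wt{k}_i$, and the hyperplane in question is exactly $\{\wt{k}_\ell=\wt{k}_a\}$. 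The first observation is that $\partial e_j/\partial \wt{k}_i=e_{j-1}(\{\wt{k}_t:t\neq i\})$ depends only on the multiset $\{\wt{k}_t:t\neq i\}$, so on $\{\wt{k}_\ell=\wt{k}_a\}$ we get $\partial e_j/\partial \wt{k}_a=\partial e_j/\partial \wt{k}_\ell$ for all $j$ (and likewise for $e_n$); by the chain rule this transfers to the true parameters, giving $\partial e_j/\partial k_{a+1,a}=\partial e_j/\partial k_{\ell+1,\ell}=\partial e_j/\partial k_{0\ell}$ on the hyperplane.

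Next I would assemble the three columns $\vec{J}_{k_{a+1,a}}$, $\vec{J}_{k_{\ell+1,\ell}}$, and $\vec{J}_{k_{0\ell}}$ and read off their entries type-by-type using Lemma~\ref{lem:partial-deriv-1-input}. For $\ell\le p-1$ (so both $a$ and $\ell$ lie strictly before the output), the Type~III and Type~IV entries of the first two columns are $\kappa/k_{a+1,a}$ and $\kappa/k_{\ell+1,\ell}$ (times $e_i^*$), while the leak column $\vec{J}_{k_{0\ell}}$ vanishes there. Seeking $c_A\vec{J}_{k_{a+1,a}}+c_L\vec{J}_{k_{\ell+1,\ell}}+c_Z\vec{J}_{k_{0\ell}}=0$, the Type~I rows force $c_A+c_L+c_Z=0$ and the Type~III row forces $c_A/k_{a+1,a}+c_L/k_{\ell+1,\ell}=0$; the pleasant point is that the Type~II and Type~IV rows are then automatically satisfied (Type~IV factors through the Type~III relation, and the $\prod_i k_{i+1,i}$ contribution in Type~II reorganizes, using $c_A+c_L+c_Z=0$, into exactly that same Type~III relation). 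Solving the two surviving conditions yields the explicit dependence
\begin{align*}
k_{a+1,a}\,\vec{J}_{k_{a+1,a}} \;-\; k_{\ell+1,\ell}\,\vec{J}_{k_{\ell+1,\ell}} \;+\; \bigl(k_{\ell+1,\ell}-k_{a+1,a}\bigr)\vec{J}_{k_{0\ell}} \;=\; 0,
\end{align*}
valid at every point of the hyperplane.

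To finish, I would note that these coefficients are generically nonzero on the hyperplane (they vanish simultaneously only where $k_{a+1,a}=k_{\ell+1,\ell}=0$, a proper subvariety of it), so the Jacobian is rank-deficient on a dense subset of the hyperplane; since the singular locus is Zariski-closed (it is cut out by the maximal minors, cf.\ Definition~\ref{def:singular-locus}) and the hyperplane is irreducible, it then contains the entire hyperplane. This is precisely the mechanism behind part~(3) of Theorem~\ref{thm:singular-locus-cycle}, now specialized from a leak--leak coincidence ($\wt{k}_q=\wt{k}_p$, two leak columns that literally coincide) to a leak--edge coincidence, where the single coincidence of columns must be replaced by the three-term combination above.

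The main obstacle is the boundary case $\ell=p$. Here $k_{\ell+1,\ell}=k_{p+1,p}$ is the edge leaving the output compartment, which appears in neither $\kappa$ nor any Type~IV coefficient $e_i^*\kappa$; consequently $\vec{J}_{k_{\ell+1,\ell}}$ has zero Type~III and Type~IV entries and can no longer supply the components needed to cancel those of $\vec{J}_{k_{a+1,a}}$, so the three-column relation degenerates to the trivial one. Indeed, a direct check on the three-compartment model with $\In=\{1\}$, $\Out=\{2\}$, $\Leak=\{2\}$ shows that $\{k_{32}+k_{02}=k_{21}\}$ is \emph{not} in the singular locus (there the singular locus is $\{k_{21}=0\}\cup\{k_{13}=0\}$). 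This strongly suggests the conjecture should be restricted to $\ell\in\Leak\cap[p-1]$, for which the argument above is complete; proving (or correctly restricting away) the $\ell=p$ case is the genuine crux.
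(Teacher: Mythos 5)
The statement you were asked to prove is Conjecture~\ref{conj:s-locus}: the paper states it without proof, so there is no argument of record to compare yours against, and your proposal must be judged on its own. On that basis it is essentially correct in both of its claims, and the two claims together settle the literal statement, negatively: the conjecture as written is false, while your restricted version (with $\ell \in \Leak \cap [p-1]$) is true.

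Your positive argument is sound. Writing $\wt{k}_i$ for the elements of the set $E$ of Lemma~\ref{lem:coefficient-map-cycle}, the entries of your three columns are as you say (Lemma~\ref{lem:partial-deriv-1-input}), and with the coefficients $\bigl(k_{a+1,a},\,-k_{\ell+1,\ell},\,k_{\ell+1,\ell}-k_{a+1,a}\bigr)$ the Type~I rows of the combination vanish on the hyperplane because $e_{j-1}(E\smallsetminus\{\wt{k}_a\})=e_{j-1}(E\smallsetminus\{\wt{k}_\ell\})$ there; the Type~II row vanishes using additionally $k_{a+1,a}\prod_{i\neq a}k_{i+1,i}=k_{\ell+1,\ell}\prod_{i\neq \ell}k_{i+1,i}=\prod_{i=1}^{n}k_{i+1,i}$; and the Type~III and Type~IV rows vanish identically (this is exactly where $a,\ell\leq p-1$ is needed). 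The closure argument upgrading generic rank-deficiency along the hyperplane to containment of the whole hyperplane in the singular locus is also correct. Your counterexample checks out as well: for $n=3$, $\In=\{1\}$, $\Out=\{2\}$, $\Leak=\{2\}$ (identifiable, hence within the conjecture's scope), the Type~II coefficient collapses to $k_{21}k_{02}k_{13}$, the columns for $k_{02}$ and $k_{32}$ differ only by $(0,0,k_{21}k_{13},0,0)^{T}$, and the Jacobian has rank $4$ exactly when $k_{21}k_{13}\neq 0$; so the singular locus is $\{k_{21}=0\}\cup\{k_{13}=0\}$ and does not contain $\{k_{32}+k_{02}=k_{21}\}$, even though $a=1$, $\ell=2=p$ is permitted by the conjecture's quantifiers.

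Two caveats about your diagnosis of the $\ell=p$ case, both visible inside the paper. First, Example~\ref{ex:singular-locus} asserts that the hyperplane $\{k_{01}+k_{21}=k_{32}\}$ is an instance of this conjecture, yet for that model ($p=2$, $\Leak=\{1,3\}$) the index set $[p-1]\smallsetminus\Leak$ is empty, so the quantifiers as written produce nothing; the intended hypotheses are therefore plausibly $a\in[p]\smallsetminus\Leak$ and $\ell\in\Leak\cap[p-1]$, which recovers that example (with $a=2=p$, $\ell=1$) and excludes your counterexample. Under that reading your proof has a genuine gap at $a=p$: there $\partial\kappa/\partial k_{p+1,p}=0$, so the Type~III row of your relation reduces to $-\kappa\neq 0$ and the three-column dependence fails; what actually holds in Example~\ref{ex:jacobian-matrix} on $\{k_{01}+k_{21}=k_{32}\}$ is the four-column relation $k_{32}\bigl(\vec{J}_{k_{32}}-\vec{J}_{k_{01}}\bigr)=k_{13}\bigl(\vec{J}_{k_{13}}-\vec{J}_{k_{03}}\bigr)$, both sides being $(0,0,-k_{21}k_{32}k_{13},0,0)^{T}$ there, and this mechanism (which pulls in the after-output parameters) is what an $a=p$ proof would need. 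Second, restricting to $\ell\leq p-1$ is sufficient but not sharp: the $\ell=p$ hyperplanes are not always absent. In Example~\ref{ex:singular-locus-thm-3} (the four-compartment model with leaks at compartments $1$ and $3$ and output at $3$), the factor $(k_{32}-k_{43}-k_{03})$ of $\det(J)$ shows that $\{k_{43}+k_{03}=k_{32}\}$ --- an instance with $\ell=p=3$, $a=2$ --- does lie in the singular locus. So the $\ell=p$ case fails for some models and holds for others, and determining exactly when is, as you say, the real open question left by this conjecture.
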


\subsection{Proofs of Theorems~\ref{thm:singular-locus-cycle} and~\ref{thm:1-1-p-singular-locus}} \label{sec:s-loc-proofs}

The idea behind the proof of Theorem~\ref{thm:singular-locus-cycle} is that certain columns of the Jacobian matrix become equal, for parameter values ``inside'' the relevant hyperplanes (e.g., when $k_{21}=0$).  We illustrate this idea through the following example.

\begin{example}
 [Example~\ref{ex:singular-locus}, continued] \label{ex:columns-become-equal}
Recall that, for 
the directed-cycle model in Figure~\ref{fig:cycle-running-example}, 
the singular locus consists of three hyperplanes, including $\{k_{21}=0\}$ and $\{k_{32}=0\}$.  Earlier, we found these hyperplanes from the determinant of the Jacobian matrix.  Alternatively, we can examine the Jacobian matrix~\eqref{eq:matrix-for-example} and observe that the third and fifth columns of this matrix (which correspond to the parameters $k_{13}$ and $k_{03}$, respectively) become equal when $k_{21}=0$ or $k_{32}=0$.
\end{example}

\begin{proof}[Proof of Theorem~\ref{thm:singular-locus-cycle}]
Let $\mathcal{M} = (G, \In, \Out, \Leak)$ be a generically locally identifiable $n$-compartment directed-cycle model with $\In=\{1\}$.  Let $\msc$ denote the coefficient map of $\mathcal{M} $.

{\bf Part~(1)}.  Let $\ell \in \Leak \smallsetminus \{1\}$.  
Our aim is to show that 
the columns of the Jacobian matrix of $\msc$ corresponding to $k_{\ell+1,\ell}$ and $k_{0\ell}$ are the same when $k_{21} = 0$.  In other words, we must verify the following equality for all coefficients $\msc_i$: 
\begin{align} \label{eq:part-1-proof-equality}
    \left( \frac{\partial}{\partial k_{\ell+1,\ell}}[\msc_i] \right) |_{k_{21}=0} ~=~ 
    \left(
    \frac{\partial}{\partial k_{0 \ell}}[\msc_i] 
        \right)|_{k_{21}=0}
\end{align}

By Proposition~\ref{prop:coeff-many-in-or-out}, each coefficient $\msc_i$ arises from the coefficient map of a directed-cycle model $\mathcal{M}(p) := (G, \In, \{p\}, \Leak)$, where $p \in \Out$.  Therefore, it suffices to restrict our attention to the coefficients~\eqref{eq:coeff-map-cycle} 
of $\mathcal{M}(p)$ given in Lemma~\ref{lem:coefficient-map-cycle}, where $p\in \Out$ is fixed but arbitrary.  We consider each of the four types of coefficients (as in Notation~\ref{notation:types-of-coefficients}).

\uline{Type I.}
By Lemma~\ref{lem:partial-deriv-1-input}(I), $\frac{\partial}{\partial k_{\ell+1,\ell}}[e_i] = \frac{\partial}{\partial k_{0 \ell}}[e_i] $, for $1 \leq i \leq n-1$ (and also for $i=n$).

\uline{Type II.}
Lemma~\ref{lem:partial-deriv-1-input}(II) yields the following equality: 
        \begin{align} \label{eq:proof-part-1-type-II}
        \left(
        \frac{\partial }{\partial k_{\ell+1,+\ell}}
        -
        \frac{\partial }{\partial k_{0 \ell}} 
        \right)
        \left[e_n-\displaystyle{\prod_{i=1}^{n} k_{i+1, i}} \right]
        ~=~
        -
        \displaystyle{\prod_{1 \leq i \leq n,~ i\neq \ell} k_{i+1, i}}~, 
        \end{align}  
and the product on the right-hand side of~\eqref{eq:proof-part-1-type-II}
contains $k_{21}$ as a factor (because $\ell \neq 1$).  Hence, the desired equality~\eqref{eq:part-1-proof-equality} holds for the Type-II coefficient, $e_n-\displaystyle{\prod_{i=1}^{n} k_{i+1, i}} $.

\uline{Type III.} 
Lemma~\ref{lem:partial-deriv-1-input}(III) implies that
$\ds \frac{\partial}{\partial k_{0,\ell}}[\kappa] = 0$ and either 
$\ds \frac{\partial}{\partial k_{\ell+1,\ell}}[\kappa] = 0$ or 
$\ds \frac{\partial}{\partial k_{\ell+1,\ell}}[\kappa] = \frac{ k_{21} k_{32} \dots k_{p,p-1}}{ k_{\ell+1,\ell}}$, 
where $p \geq 2$ and $\ell \leq p-1$.  Hence,  as $\ell \geq 1$, we have $\left( \ds \frac{\partial}{\partial k_{\ell+1,\ell}}[\kappa] \right)|_{k_{21}=0}=0$.  We conclude that the desired equality~\eqref{eq:part-1-proof-equality} holds for the coefficient $\kappa$.

\uline{Type IV.} 
Fix $1 \leq i \leq n-p$.  
We consider three cases.  First, if $\ell \geq p+1$, then the desired equality~\eqref{eq:part-1-proof-equality} follows from Lemma~\ref{lem:partial-deriv-1-input}(IV)(a).  Next, if $\ell = p$, then the desired equality follows from Lemma~\ref{lem:partial-deriv-1-input}(IV)(b).  Finally, assume that $2 \leq \ell \leq p-1$.  In this case, parts (b) and (c) of Lemma~\ref{lem:partial-deriv-1-input}(IV) imply the following:
\begin{align} \label{eq:proof-part-1-type-IV-last-case}
        \left(
        \frac{\partial }{\partial k_{\ell+1,+\ell}}
        -
        \frac{\partial }{\partial k_{0 \ell}} 
        \right)
        \left[e_i^* \kappa \right]
        ~=~
        e_i^* \frac{\kappa}{ k_{\ell+1, \ell}}~,
        \end{align}  
and the right-hand side of~\eqref{eq:proof-part-1-type-IV-last-case}
contains $k_{21}$ as a factor (because $\ell \neq 1$).
Thus, the right-hand side of~\eqref{eq:proof-part-1-type-IV-last-case} becomes zero when $k_{21}=0$ and so the desired equality holds in this case as well.

{\bf Part~(2)}.  
Assume 
        $\Leak = \{\ell_1,\ell_2,\dots,\ell_z \}$
        and 
        $\ell_z \geq p$ for all $p \in \Out$.  Let $a \in [n] \smallsetminus \{\ell_z\}$.
Our aim is to show that 
the columns of the Jacobian matrix of $\msc$ corresponding to $k_{\ell_z+1,\ell_z}$ and $k_{0 \ell_z}$ are the same when $k_{a+1,a} = 0$.  That is, we will show that $\left( \frac{\partial}{\partial k_{\ell_z+1,\ell_z}}[\msc_i] \right)|_{k_{a+1,a} = 0} = \left(  \frac{\partial}{\partial k_{0 \ell_z}}[\msc_i] \right)|_{k_{a+1,a} = 0}$.
As in the proof of part~(1) above, Proposition~\ref{prop:coeff-many-in-or-out} allows us to restrict our attention to the coefficients~\eqref{eq:coeff-map-cycle} 
of $\mathcal{M}(p):= (G, \In, \{p\}, \Leak)$ given in Lemma~\ref{lem:coefficient-map-cycle}, where $p\in \Out$ is fixed but arbitrary. 
As in part~(1), we consider the four types of coefficients.

\uline{Type I.} By Lemma~\ref{lem:partial-deriv-1-input}(I), $\frac{\partial}{\partial k_{\ell_z+1,\ell_z}}[e_i] = \frac{\partial}{\partial k_{0 \ell_z}}[e_i] $, 
for $1 \leq i \leq n-1$ (and for $i=n$, too).

\uline{Type II.} 
Lemma~\ref{lem:partial-deriv-1-input}(II) yields the following equality: 
        \begin{align} \label{eq:proof-part-2-type-II}
        \left(
        \frac{\partial }{\partial k_{\ell_z+1,\ell_z}}
        -
        \frac{\partial }{\partial k_{0 \ell_z}} 
        \right)
        \left[e_n-\displaystyle{\prod_{i=1}^{n} k_{i+1, i}} \right]
        ~=~
        -
        \displaystyle{\prod_{1 \leq i \leq n,~ i\neq \ell_z } k_{i+1, i}}~, 
        \end{align}  
and the product on the right-hand side of~\eqref{eq:proof-part-2-type-II}
contains $k_{a+1,a}$ as a factor (because $a \neq \ell_z $).  Hence, the desired equality holds for this coefficient.

\uline{Type III.} Lemma~\ref{lem:partial-deriv-1-input}(III) implies that $\frac{\partial}{\partial k_{\ell_z+1,\ell_z}}[\kappa] = \frac{\partial}{\partial k_{0 \ell_z}}[\kappa] =0$ (here we use the fact $\ell_z \geq p$). 

\uline{Type IV.}
Lemma~\ref{lem:partial-deriv-1-input}(IV)(a--b) 
implies that $\frac{\partial}{\partial k_{0 \ell_z}}[e_i^*\kappa] =
\frac{\partial}{\partial k_{\ell_z+1,\ell_z}}[e_i^*\kappa]  = 0  $, for all $1 \leq i \leq n - p$.

{\bf Part~(3)}.  Assume $\Out=\{p\}$ and $\Leak=\{q,p\}$, where $1 \leq q < p \leq n-1$.  
Our aim is to show that 
the two ``leak columns'' of the Jacobian matrix of $\msc$ are equal when 
$k_{0q} + k_{q+1,q} = k_{0p} + k_{p+1,p}$.  
More precisely, we show that 
$\left( \frac{\partial}{\partial k_{0p}}[\msc_i] \right)|_{k_{0q} + k_{q+1,q} = k_{0p} + k_{p+1,p}} = \left(  \frac{\partial}{\partial k_{0q}}[\msc_i] \right)|_{k_{0q} + k_{q+1,q} = k_{0p} + k_{p+1,p}}$, where 
$\msc$ is the coefficient map~\eqref{eq:coeff-map-cycle} of $\mathcal{M}$.  As before, we consider the four types of coefficients $\msc_i$.

\uline{Type I.}
Let $1 \leq i \leq n$.
By equation~\eqref{eq:lem-difference-partials} in 
Lemma~\ref{lem:partial-deriv-1-input}(I), the difference
$\frac{\partial}{\partial k_{0q}}[e_i] - \frac{\partial}{\partial k_{0 \ell}}[e_i] $, when $k_{0q} + k_{q+1,q} = k_{0p} + k_{p+1,p}$, is zero. 

\uline{Type II.}
By equation~\eqref{eq:lem-difference-partials-type-II} in Lemma~\ref{lem:partial-deriv-1-input}(II), the difference 
$\left(
\frac{\partial}{\partial k_{0q}}
- \frac{\partial}{\partial k_{0 \ell}} 
\right)
\left[e_n- {\prod_{i=1}^{n} k_{i+1, i}} \right] $, when $k_{0q} + k_{q+1,q} = k_{0p} + k_{p+1,p}$, is zero. 

\uline{Type III.} Lemma~\ref{lem:partial-deriv-1-input}(III) implies that $\frac{\partial}{\partial k_{0p}}[\kappa] = \frac{\partial}{\partial k_{0 q}}[\kappa] =0$.

\uline{Type IV.} Lemma~\ref{lem:partial-deriv-1-input}(IV)(b) 
implies that $\frac{\partial}{\partial k_{0p}}[e_i^*\kappa] =
\frac{\partial}{\partial k_{0q}}[e_i^*\kappa]  = 0  $, for all $1 \leq i \leq n - p$.

\end{proof}

As mentioned above, the proof of Theorem~\ref{thm:1-1-p-singular-locus} closely follows~\cite[proof of Theorem~5.3]{BGMSS}.

\begin{proof}[Proof of Theorem~\ref{thm:1-1-p-singular-locus}]
Assume $\In=\Out=\{1\}$ and $\Leak=\{\ell\}$.
As the output is in the first compartment, 
 Lemma~\ref{lem:coefficient-map-cycle} implies that the Type-III coefficient is $\kappa = 1$.  Therefore, we may omit that coefficient; that is, we consider the remaining $2n-1$ coefficients (from Lemma~\ref{lem:coefficient-map-cycle}): $e_1, e_2, \dots, e_{n-1}, e_{n}{-}\prod_{i=1}^{n}k_{i+1,i}, e_{1}^{*}, e_{2}^{*}, \dots, e_{n-1}^{*}$.  Here, $e_i$ (respectively, $e_{i}^{*}$) is the $i$-th elementary symmetric polynomial on the set 
    $E=\{ \widetilde{k}_1, \widetilde{k}_2,  \dots, \widetilde{k}_n \}$ 
    (respectively, $E^*=\{ \widetilde{k}_2, \widetilde{k}_3,  \dots, \widetilde{k}_n \}$).

Let $J$ denote the resulting Jacobian matrix; and let $J_{0\ell}$ and $J_{\ell+1, \ell}$ denote the columns of $J$ corresponding to the parameters $k_{0 \ell}$ and $k_{\ell+1,\ell}$, respectively. 
We claim that the matrix $J'$ obtained from $J$ by replacing the column $J_{0 \ell}$ by $J_{0 \ell} - J_{\ell+1, \ell}$ has the following form:
\begin{equation} \label{eq:jacobian-1-1-l}
J'
    ~=~ \quad
\begin{pNiceArray}{cc|ccc}[first-col,first-row]
      & k_{0\ell} & k_{21} & k_{32} & \cdots & k_{1n}\\
e_{1} & 0         &   1    &        &        &\\
e_{2} & 0         &   *    &        &        &\\
\vdots&  \vdots   & \vdots &        &   *    &\\
e_{n{-}1}&  0     &   *    &        &        &\\
e_{n}{-}\prod_{i=1}^{n}k_{i+1,i}&  \prod_{i\in[n]\smallsetminus\ell}k_{i+1,i}   &    *    &        &        &\\
\hline
e_{1}^{*}&   0    &   0    &        &        &\\
\vdots& \vdots    & \vdots &        &   M    &\\
e_{n{-}1}^{*}& 0  &   0    &        &        &\\
\end{pNiceArray}~.
\end{equation}
Indeed, the first column in~\eqref{eq:jacobian-1-1-l} was shown in the proof of Theorem~\ref{thm:singular-locus-cycle} (part (2)); and the verification of the form of the second column is straightforward.  

Next, we claim that the singular locus is defined by the vanishing of the following expression:
    \begin{equation}
        \label{eq:minor-1-1-l}
    (\det M)  \prod_{i\in [n] \smallsetminus \{\ell \} }  k_{i+1,i}~,
    \end{equation}
where  $M$ is the $(n-1) \times (n-1)$ lower-right submatrix in~\eqref{eq:jacobian-1-1-l} (we will compute it below).  
Indeed, each $(n+1)\times(n+1)$ minor of $J'$ is a scalar multiple of the expression~\eqref{eq:minor-1-1-l},  so it defines the singular locus.

By comparing the expression~\eqref{eq:minor-1-1-l} to the equation~\eqref{eq:s-locus-in=out=1}, we see that all that remains to be shown is that $\det M$ equals, up to sign, the (Vandermonde) polynomial $ \prod_{2 \leq i < j \leq n} (\widetilde{k}_{i} - \widetilde{k}_{j})$.  

To this end, it is straightforward to check that the matrix $M$ in~\eqref{eq:jacobian-1-1-l} is as follows:
\begin{equation} \label{eq:submatrix-vandermonde}
M
    ~=~ 
\begin{pNiceArray}{cccc}
e_{1}(E^* \smallsetminus \{\widetilde{k}_2\}) & 
    e_{1}(E^* \smallsetminus \{\widetilde{k}_3\}) & 
    \dots
    &
    e_{1}(E^* \smallsetminus \{\widetilde{k}_n\}) 
    \\
e_{2}(E^* \smallsetminus \{\widetilde{k}_2\}) & 
    e_{2}(E^* \smallsetminus \{\widetilde{k}_3\}) & 
    \dots
    &
    e_{2}(E^* \smallsetminus \{\widetilde{k}_n\}) \\
\vdots & \vdots & \ddots & \vdots \\
e_{n-1}(E^* \smallsetminus \{\widetilde{k}_2\}) & 
    e_{n-1}(E^* \smallsetminus \{\widetilde{k}_3\}) & 
    \dots
    &
    e_{n-1}(E^* \smallsetminus \{\widetilde{k}_n\}) 
\end{pNiceArray}
\end{equation}
This matrix~\eqref{eq:submatrix-vandermonde} is exactly what is obtained from the corresponding matrix (also called $M$) in \cite[proof of Theorem 5.3]{singularlocus}, 
after performing the substitution $a_{\ell + 1, \ell} \mapsto \widetilde{k}_{\ell}$. 
The determinant of $M$ given in that proof exactly recovers the desired Vandermonde polynomial, which completes the proof.
\end{proof}

\section{Catenary models} \label{sec:catenary}
In prior sections, we fully characterized identifiability in directed-cycle models and analyzed their singular loci.  In this section, we turn to catenary models.  Recall that 
such models are bidirected-tree models (Definition~\ref{def:types-of-models}) and so -- when there is only one input and one output -- identifiability can be assessed simply by 
counting the number of leaks and 
examining the distance from the input to the output (Proposition \ref{prop:classification-identifiable-models}).
However, the singular locus of (generically locally
identifiable) catenary models is not well understood~\cite{singularlocus}, and additionally we wish to study individual parameters (in possibly unidentifiable models) and their impact on identifiability~\cite{sling}.  (There are some results in this direction, for certain cases of catenary models~\cite{chau1985linear,vicini2000identifiability}.)

One approach to tackling these problems is to start with an explicit formula for the coefficient map of an arbitrary catenary model, much like the formula stated earlier for cycle models (Lemma~\ref{lem:coefficient-map-cycle}) which allowed us to analyze the singular locus of cycle models.  Accordingly, we prove a coefficient-map formula for catenary models, Theorem~\ref{thm:catenary-coefficient-map} below, which is the main result of this section.  

The remainder of this section is organized as follows.  
Section~\ref{sec:catenary-prelim} introduces the notation we need for Theorem~\ref{thm:catenary-coefficient-map}, which we state in Section~\ref{sec:catenary-statment}.  In Section~\ref{sec:catenary-bortner-subsection}, we recall a prior result 
we subsequently use to prove  Theorem~\ref{thm:catenary-coefficient-map} in Section~\ref{sec:catenary-formula-proof}.

\subsection{Preliminaries} \label{sec:catenary-prelim}
In order to state Theorem~\ref{thm:catenary-coefficient-map}, we need some notation.
Consider an $n$-compartment catenary model $\mathcal{M} = (G, \In, \Out, \Leak)$.  
For $1 \leq i\leq j \leq n$, we denote the product of the edge-parameters along the path from compartment-$i$ to compartment-$j$ by
                         \begin{align} \label{eq:kappa-product-path}
                         \kappa(i,j) ~ := ~
                             k_{i+1,i} k_{i+2,i+1} \cdots k_{j,j-1}   ~,      
                        \end{align}
which matches the notation from~\eqref{eq:product-i-to-j}.  
Next, for $1 \leq \ell \leq n$, we let $ \mathrm{Out}_{\mathcal{M}}(\ell) $
denote the sum of the parameters of all outgoing edges and/or leaks at compartment-$\ell$ as follows:
$$ \mathrm{Out}_{\mathcal{M}}(\ell) ~:=~ 
\begin{cases} 
    k_{21}  & \text { if }  \ell = 1  \text{ and } \ell\not\in \Leak  \\ 
    k_{21} + k_{01} & \text { if } \ell = 1 \text{ and }  \ell \in \Leak \\
    k_{\ell - 1,\ell} + k_{\ell+1 ,\ell} & \text { if } 2 \leq \ell \leq n-1 \text{ and } \ell\not\in \Leak \\ 
    k_{\ell - 1,\ell} + k_{\ell+1 ,\ell} + k_{0,\ell} & \text { if } 2 \leq \ell \leq n-1 \text{ and }  \ell \in \Leak \\
    k_{n-1,n}  & \text { if } \ell = n  \text{ and }  \ell\not\in \Leak \\ 
    k_{n-1,n} + k_{0n} & \text { if } \ell = n \text{ and }  \ell \in \Leak~. 
    \end{cases}
    $$
For any subset $L \subseteq [n]$, we introduce the set of the corresponding ``outgoing sums'':
\begin{align}
    \label{eq:out-set}
    \mathrm{Out}_{\mathcal{M}}(L) ~:=~ \{ \mathrm{Out}_{\mathcal{M}}(\ell) \mid \ell \in L\}~.
\end{align}
Next, we denote the set of nonempty subsets of $[n-1]$ without consecutive numbers:
\begin{align} \label{eq:sets-with-no-consec}
    \Gamma_n ~:&=~ \{A \subseteq [n-1] \mid 
    A \neq \varnothing ;~ 
    \mathrm{if~}i\in A, \mathrm{~then~} i+1 \notin A \}~.
\end{align}
Finally, for any subset of positive integers $I$, we define the following product of edge-labels of cycles $i \leftrightarrows i+1$:
\begin{align} \label{eq:kappa-product}
    \kappa_I ~:&=~ \prod_{i\in I} 
        k_{i,i+1} k_{i+1,i}~,
    \end{align}
and we also define the following set:
\begin{align} \label{eq:I+1}
    I^+~:=~ I \cup \{j+1 \mid j \in I\}~.
\end{align}

\subsection{Main result} \label{sec:catenary-statment}
The graphs underlying catenary models exhibit reflectional symmetry (symmetry under switching compartment-$i$ with compartment-$(n-i)$ for all $i$).  Therefore, by relabeling compartments, if needed, we may assume that the input is located ``before'' the output. 
We make this assumption in the following theorem, 
which
is the main result of this section.

\begin{theorem}[Coefficient map of catenary model] \label{thm:catenary-coefficient-map}
Let $n \geq 1$. 
Let $\mathcal{M} = (G, \In, \Out, \Leak)$ be an $n$-compartment catenary model. 

Assume that $\mathcal{M}$
has only one input and one output, $\In = \{in\}$ and $\Out = \{out\}$, and that the input appears at or before the output ($in \leq out$).  
Let $d$ denote the length of the (generalized\footnote{The concept of path is generalized to allow length-$0$ paths, as in Remark~\ref{rem:path}.}) path from input to output ($d=out-in$), and let $\mathcal{P}$ denote the set of vertices along this path ($\mathcal{P}=\{in, in+1,\dots, out\}$).  
The coefficient map of $\mathcal{M}$, 
    $$\msc: \R^{2n+\lvert Leak \rvert -2} \longrightarrow \R^{2n-d+1}~,$$ 
where we write 
$\msc=(
    \mathsf{a}_1, 
    \mathsf{a}_2, 
    \dots, \mathsf{a}_n,
    ~
    \widetilde{\mathsf{a}}_d, \widetilde{\mathsf{a}}_{d+1}, \dots, \widetilde{\mathsf{a}}_n) $,    
    is given by (for relevant indices $i$)
{\footnotesize 
\begin{align} \label{eq:coefficient-catenary-LHS}
\mathsf{a}_i ~:=~
        &e_i \left( \mathrm{Out}_{\mathcal{M}}([n]) \right) 
        - 
        \sum_{ \{ I \in \Gamma_n ~:~ 2\lvert I \rvert \leq i \} } \kappa_I ~
                e_{i-2\lvert I \rvert }(\mathrm{Out}_{\mathcal{M}}([n] \smallsetminus I^+))~,
                \\
\label{eq:coefficient-catenary-RHS}
\widetilde{\mathsf{a}}_i ~:=~
        \kappa(in,out)
        &
        \left(
         e_{i-d} 
        \left( 
        \mathrm{Out}_{\mathcal{M}}
        ([n] \smallsetminus \mathcal{P} ) 
        \right) 
        - 
        \sum_{ \{ I \in \Gamma_n ~:~ 2\lvert I \rvert \leq i,~ I \cap \mathcal{P} = \varnothing \} } \kappa_I ~
                e_{i-d-2\lvert I \rvert }(\mathrm{Out}_{\mathcal{M}}([n] \smallsetminus (I^+ \cup \mathcal{P})))
        \right)~,     
\end{align}
} \\ 
where $e_i(A)$ denotes the $i$-th elementary symmetric polynomial of a set $A$, and the remaining notation is as in~\eqref{eq:kappa-product-path}--\eqref{eq:I+1}.
\end{theorem}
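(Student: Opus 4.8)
The plan is to read off both families of coefficients directly from the input-output equation of Proposition~\ref{prop:in/out}, specialized to the single input $\In=\{in\}$ and single output $\Out=\{out\}$, for which that equation reads
\begin{equation*}
\det(\partial I-A)\,y_{out} ~=~ (-1)^{in+out}\det\!\left[(\partial I-A)^{in,out}\right]u_{in}~.
\end{equation*}
Thus $\mathsf{a}_i$ (respectively, $\widetilde{\mathsf{a}}_i$) is a coefficient, as a polynomial in $\partial$, of $\det(\partial I-A)$ (respectively, of $(-1)^{in+out}\det[(\partial I-A)^{in,out}]$). The key structural fact is that, for a catenary model, the compartmental matrix $A$ is tridiagonal: its diagonal entries are $-\mathrm{Out}_{\mathcal{M}}(\ell)$, its entries at positions $(\ell,\ell+1)$ and $(\ell+1,\ell)$ are $k_{\ell,\ell+1}$ and $k_{\ell+1,\ell}$, and all other entries vanish. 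Hence $\partial I-A$ is tridiagonal with diagonal $\partial+\mathrm{Out}_{\mathcal{M}}(\ell)$ and off-diagonal entries $-k_{\ell,\ell+1}$ and $-k_{\ell+1,\ell}$, and the whole proof reduces to two determinant computations for tridiagonal matrices.

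For the $\mathsf{a}_i$, I would expand $\det(\partial I-A)$ using the matching (dimer) expansion of a tridiagonal determinant, equivalently the three-term recursion $D_j=(\partial+\mathrm{Out}_{\mathcal{M}}(j))\,D_{j-1}-k_{j,j-1}k_{j-1,j}\,D_{j-2}$. The nonvanishing terms of the Leibniz expansion correspond exactly to matchings of the path $1-2-\cdots-n$, i.e.\ to subsets $I\subseteq[n-1]$ with no two consecutive elements (the left endpoints of the matched edges); these are the sets of $\Gamma_n$ together with $I=\varnothing$. A matching $I$ contributes the sign $(-1)^{\lvert I\rvert}$, the monomial $\kappa_I=\prod_{i\in I}k_{i,i+1}k_{i+1,i}$ coming from its edges (each off-diagonal pair contributing $(-k_{i,i+1})(-k_{i+1,i})=k_{i,i+1}k_{i+1,i}$), and the product of the remaining diagonal factors $\prod_{\ell\in[n]\smallsetminus I^{+}}(\partial+\mathrm{Out}_{\mathcal{M}}(\ell))$, where $I^{+}=I\cup\{j+1:j\in I\}$ is the set of matched vertices. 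Expanding each such product of $n-2\lvert I\rvert$ linear factors into elementary symmetric polynomials and collecting the coefficient of the relevant power of $\partial$ gives $e_{i-2\lvert I\rvert}(\mathrm{Out}_{\mathcal{M}}([n]\smallsetminus I^{+}))$ for each admissible $I$ (those with $2\lvert I\rvert\le i$); separating off the $I=\varnothing$ term then produces the expression~\eqref{eq:coefficient-catenary-LHS}.

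For the $\widetilde{\mathsf{a}}_i$, the core step is to compute the minor $\det[(\partial I-A)^{in,out}]$ of the tridiagonal matrix. Deleting row $in$ and column $out$ (with $in\le out$) forces, in the Leibniz expansion, the use of the sub-diagonal entries $-k_{k+1,k}$ for $k=in,\dots,out-1$ along the deleted band, while the indices before $in$ and after $out$ contribute the determinants of the two tridiagonal blocks on $\{1,\dots,in-1\}$ and $\{out+1,\dots,n\}$; since deleting the contiguous set $\mathcal{P}=\{in,\dots,out\}$ disconnects the path, these blocks assemble into the tridiagonal matrix on $[n]\smallsetminus\mathcal{P}$. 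This yields
\begin{equation*}
\det\!\left[(\partial I-A)^{in,out}\right] ~=~ \Big(\textstyle\prod_{k=in}^{out-1}(-k_{k+1,k})\Big)\det\!\big(\text{tridiagonal on }[n]\smallsetminus\mathcal{P}\big) ~=~ (-1)^{d}\,\kappa(in,out)\,\det\!\big(\text{tridiagonal on }[n]\smallsetminus\mathcal{P}\big)~.
\end{equation*}
Multiplying by the prefactor $(-1)^{in+out}$ from the input-output equation and using $d=out-in$ makes the sign disappear, since $(-1)^{in+out}(-1)^{out-in}=1$, leaving $\kappa(in,out)$ times the complementary determinant. Applying the same matching expansion to that determinant on $[n]\smallsetminus\mathcal{P}$, whose admissible matchings are exactly those whose edges lie in $[n]\smallsetminus\mathcal{P}$ (recorded by the constraint $I\cap\mathcal{P}=\varnothing$ together with the removal of $I^{+}\cup\mathcal{P}$), and extracting the relevant coefficient of $\partial$ as elementary symmetric polynomials on $\mathrm{Out}_{\mathcal{M}}([n]\smallsetminus(I^{+}\cup\mathcal{P}))$ gives~\eqref{eq:coefficient-catenary-RHS}.

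I expect the minor computation to be the main obstacle: carefully justifying the factorization of the tridiagonal cofactor (equivalently, the ``product of sub-diagonals times complementary block'' formula) and tracking every sign, namely the matching signs $(-1)^{\lvert I\rvert}$, the $(-1)^{d}$ from the sub-diagonal band, and the $(-1)^{in+out}$ of Proposition~\ref{prop:in/out}. The rest is bookkeeping: handling the boundary cases $in=1$, $out=n$, and $d=0$ (where $\kappa(in,out)=1$ and one or both complementary blocks are empty), confirming that the top coefficient $\widetilde{\mathsf{a}}_n$ vanishes because $e_{n-d}$ of a set of size $n-d-1$ is zero, and verifying that the constraint ``edges disjoint from $\mathcal{P}$'' is faithfully captured by the summation condition in~\eqref{eq:coefficient-catenary-RHS}.
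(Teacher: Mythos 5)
Your method is genuinely different from the paper's, and in outline it is sound. The paper invokes the spanning-forest formula of Bortner \emph{et al.} (Lemma~\ref{lem:coeff-formula}) and then argues combinatorially that $e_i\left(\mathrm{Out}_{\mathcal{M}}([n])\right)$ enumerates all $i$-edge spanning incoming subgraphs of $\widetilde{G}$ while the subtracted sum enumerates the non-forests among them; you instead compute directly from Proposition~\ref{prop:in/out}, using the tridiagonal structure of $\partial I-A$: the matching (dimer) expansion of $\det(\partial I-A)$, and the exact factorization of the $(in,out)$ minor into the forced subdiagonal band times the two complementary tridiagonal blocks. Your route needs no input from \cite{BGMSS}, and the step you single out as the main obstacle is in fact unproblematic: in the minor, columns $1,\dots,in-1$ must be matched within rows $1,\dots,in-1$, rows $out+1,\dots,n$ within columns $out+1,\dots,n$, and the band entries in positions $(k+1,k)$ for $k=in,\dots,out-1$ are then forced, with induced permutation equal to the identity in minor coordinates; your sign count $(-1)^{in+out}(-1)^{d}=1$ is also correct.

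The gap is the final identification. Carried out correctly, your expansion reads
\begin{align*}
\det(\partial I-A)~=~\sum_{I}(-1)^{\lvert I\rvert}\,\kappa_I\prod_{\ell\in[n]\smallsetminus I^{+}}\bigl(\partial+\mathrm{Out}_{\mathcal{M}}(\ell)\bigr)~,
\end{align*}
so each $\kappa_I$-term carries the sign $(-1)^{\lvert I\rvert}$, not the uniform minus sign of~\eqref{eq:coefficient-catenary-LHS}. The two expressions agree only while every admissible $I$ is a singleton, and they diverge as soon as $\lvert I\rvert\ge 2$ is possible (i.e.\ $n\ge 4$ and $i\ge 4$). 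Concretely, for $n=4$, $i=4$, the $I=\{1,3\}$ term enters~\eqref{eq:coefficient-catenary-LHS} with the wrong sign, producing a discrepancy of $2\kappa_{\{1,3\}}=2k_{12}k_{21}k_{34}k_{43}$ for any leak set: with $\Leak=\varnothing$ the true coefficient is $\mathsf{a}_4=\det(-A)=0$ (the columns of $A$ sum to zero), which your alternating expansion returns, while~\eqref{eq:coefficient-catenary-LHS} returns $-2k_{12}k_{21}k_{34}k_{43}$. An analogous mismatch occurs on the input side: your block computation correctly requires the matched pairs to avoid $\mathcal{P}$ entirely, i.e.\ $I^{+}\cap\mathcal{P}=\varnothing$, whereas~\eqref{eq:coefficient-catenary-RHS} requires only $I\cap\mathcal{P}=\varnothing$ and so admits the spurious sets with $in-1\in I$. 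Already for $n=3$, $\In=\Out=\{2\}$, $\Leak=\varnothing$, the stated formula gives $\widetilde{\mathsf{a}}_2=k_{21}k_{23}-k_{12}k_{21}$, while the relevant minor is $(\partial+k_{21})(\partial+k_{23})$, whose constant term is $k_{21}k_{23}$.

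So the one step of your argument that fails is the assertion that the expansion ``produces'' the printed formulas; what your computation actually proves is a corrected statement, with inclusion--exclusion signs $(-1)^{\lvert I\rvert+1}$ in the subtracted sums and with the second sum restricted to $I^{+}\cap\mathcal{P}=\varnothing$. You should be aware that the paper's own proof commits the parallel errors: its claim~(ii) counts a non-forest subgraph containing $c$ two-cycles $2^{c}-1$ times instead of once, and its treatment of $\widetilde{G}^{*}_{out}$ overlooks that a two-cycle on $\{in-1,in\}$ cannot occur in a subgraph containing the path $P$, so the printed condition $I\cap\mathcal{P}=\varnothing$ is too weak. Rather than claiming agreement with the theorem as stated, your write-up should flag the contradiction and prove the corrected identity.
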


The proof of Theorem~\ref{thm:catenary-coefficient-map} appears at the end of this section (Section~\ref{sec:catenary-formula-proof}).

\begin{example} \label{ex:catenary-illustrate-theorem}
    Recall that the catenary model in 
    Figure~\ref{fig: bidirectedcatex} has $n=4$ compartments and $\In=\{1\}$, $\Out=\{2\}$, and $\Leak = \{2,4\}$.  By Theorem~\ref{thm:catenary-coefficient-map}, the coefficient $\mathsf{a}_3$ can be computed as follows (here we start the computation, but do not simplify the expression):
\begin{align} \label{eq:coeff-3}
        \mathsf{a}_3
        ~&=~
        e_3 \left( \{ 
            k_{21}, ~
            k_{12}+k_{32}+k_{02},~
            k_{23}+k_{43},~
            k_{34}+k_{04} 
            \}
            \right) \\
            \notag
        & \quad \quad 
        -
        \big(
        k_{12}k_{21} e_1 
                \left( \{ 
                k_{23}+k_{43},~
                k_{34}+k_{04} 
                \}\right)
        +        
        k_{23}k_{32} e_1 
                \left( \{ 
                k_{21}, ~
                k_{34}+k_{04} 
                \}\right) \\
                \notag
        & \quad \quad \quad \quad 
        +        
        k_{34}k_{43} e_1 
                \left( \{ 
                k_{21}, ~
                k_{12}+k_{32}+k_{02}
                \}\right)
        \big)~.
    \end{align}    
\end{example}

\begin{remark} \label{rem:cat-more-in-out}
    Theorem~\ref{thm:catenary-coefficient-map} pertains to catenary models with one input, one output, and any number of leaks.  Generalizing this result to allow more inputs and/or more outputs is a straightforward application of Proposition~\ref{prop:coeff-many-in-or-out}, and so is omitted.
\end{remark}

\subsection{A formula for coefficients} \label{sec:catenary-bortner-subsection}
Our proof of Theorem~\ref{thm:catenary-coefficient-map} uses a recent formula for coefficient maps given by Bortner {\em et al.} (Lemma~\ref{lem:coeff-formula} below).  We need the following definition.

 \begin{definition}
     \label{def:forest}
     Let $H$ be a directed graph.
     \begin{enumerate}
         \item      A {\bf spanning subgraph} of $H$ is 
     a subgraph of $H$ that involves all vertices of $H$.  
    \item     
    A \textbf{spanning incoming forest}
     of $H$ 
     is a spanning subgraph for which:
        \begin{itemize}
            \item      each node has at most one outgoing edge, and 
            \item     the underlying undirected multigraph
     is a forest, i.e., has no cycles.
        \end{itemize}
     \end{enumerate}
Also, an {\bf m-edge} directed graph is a directed graph with exactly $m$ edges.
 \end{definition}

The following result, which is~\cite[Theorem~3.1]{BGMSS}, is stated for the case of models with one input and one output, which is the situation considered in this section (recall Remark~\ref{rem:cat-more-in-out}).

\begin{lemma}[Coefficients of input-output equations for linear compartmental models~\cite{BGMSS}] 
\label{lem:coeff-formula}
    Let $n \geq 1$.
    Let $\mathcal{M} = (G, \In, \Out, \Leak)$
    be a linear compartmental model with $n$ compartments, one input, and one output: $\In=\{j\}$ and $\Out = \{i\}$.  Write the input-output equation~\eqref{eq:in-out} as follows:
 	\begin{align}
    \label{eq:eq:i-o-c-and-d}
	 y_i^{(n)} + c_{n-1}  y_i^{(n-1)} + \dots  + c_1 y_i' + c_0 y_i ~=~ 
		d_{n-1} u_j^{(n-1)} + 
			 \dots  + d_{1} u_j' + d_{0} u_j
		~.
	\end{align}   
    Then the coefficients of the input-output equation~\eqref{eq:eq:i-o-c-and-d} are given by:
    \begin{align} \label{eq:bortner-formula-coeff}
    c_k ~&=~ \sum_{F \in \mathcal{F}_{n-k}(\widetilde{G})} \pi_F \quad \text{ for } k=0,1, \ldots , n-1~, \ \text{ and } \\
    \label{eq:bortner-formula-coeff-RHS}
    d_k ~&=~ \sum_{F \in \mathcal{F}^{j,i}_{n-k-1}(\widetilde{G}^*_i)} \pi_F \quad \text{ for } k=0,1,\ldots, n-1~,
    \end{align}
where:  
\begin{itemize}
    \item $\widetilde{G}$ is the directed graph obtained from $G$ by adding (1)~a new vertex, labeled by $0$, and (2)~for all $\ell \in \Leak$, an edge $\ell \to 0$, labeled by $k_{0 \ell}$,
    \item $\widetilde{G}^*_i$ is the directed graph obtained from $\widetilde{G}$ by removing all outgoing edges from vertex-$i$ (the output), 
	\item  $\mathcal{F}_{m} ( \widetilde{G} )$ is the set of all $m$-edge spanning incoming  forests of  $\widetilde{G}$,  
	\item  $ \mathcal{F}_{m}^{j, i } ( \widetilde{G}_i^* )$ is the set of all $m$-edge spanning incoming forests of 
    $\widetilde{G}_i^*$, such that some connected component (of the underlying undirected graph) contains both $j$ and $i$, 
    \item 
    $\pi_F$ is the product of edge labels of a graph $F$, that is,
    $\pi_F := \prod_{ e \in E_F } L(e)~$, where 
    $L(e)$ is the label of edge $e$, and 
    $E_F$ is the set of edges of $F$.
\end{itemize} 
\end{lemma}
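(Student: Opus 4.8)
The plan is to reduce the claim to two applications of the (directed, weighted) matrix--tree theorem. By Proposition~\ref{prop:in/out}, the one-input, one-output input--output equation is $\det(\partial I - A)\,y_i = (-1)^{i+j}\det[(\partial I - A)^{j,i}]\,u_j$, so after expanding both determinants as polynomials in the single operator $\partial = d/dt$, the coefficient $c_k$ is the coefficient of $\partial^k$ in $\det(\partial I - A)$, while $d_k$ is the coefficient of $\partial^k$ in the minor $\det[(\partial I - A)^{j,i}]$. It therefore suffices to identify these two families of coefficients with the stated forest sums.

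For the $c_k$, expanding $\det(\partial I - A)$ shows that the coefficient of $\partial^{k}$ equals $(-1)^{n-k}\sum_{|S| = n-k}\det(A_S)$, the signed sum of the size-$(n-k)$ principal minors of $A$. Since $(-1)^{|S|}\det(A_S) = \det((-A)_S)$, I would absorb the sign and write $c_k = \sum_{|S| = n-k}\det((-A)_S)$. The matrix $-A$, restricted to $[n]$, is (up to transposition, which leaves principal minors unchanged) the out-Laplacian of the augmented graph $\widetilde{G}$: its diagonal entry at $\ell$ is the total weight $\mathrm{Out}(\ell)$ of the edges leaving $\ell$ in $\widetilde{G}$, \emph{including} the leak edge $\ell \to 0$ — which is exactly why the ground vertex $0$ and the edges $\ell\to 0$ must be adjoined — and its off-diagonal entries are the negatives of the edge weights. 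The directed matrix--tree theorem, in its principal-minor / incoming-forest form, then gives $\det((-A)_S) = \sum \pi_F$ over the spanning incoming forests of $\widetilde{G}$ rooted at $\{0\}\cup([n]\smallsetminus S)$; here $0$ is automatically a root, since every leak edge points into it. As $S$ ranges over all subsets with $|S| = n-k$, the root set ranges over all size-$(k{+}1)$ subsets containing $0$, so the sum enumerates each $(n-k)$-edge spanning incoming forest exactly once, yielding $c_k = \sum_{F \in \mathcal{F}_{n-k}(\widetilde{G})}\pi_F$.

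For the $d_k$, the relevant object is the non-principal minor obtained by deleting row $j$ and column $i$; its coefficient of $\partial^k$ is again a signed sum of minors of $A$ of the appropriate size. Deleting the column indexed by $i$ removes precisely the entries recording the outgoing edges of the output vertex, which is the combinatorial content of passing to $\widetilde{G}^*_i$. I would then invoke Chaiken's all-minors matrix--tree theorem for a single row/column deletion: it expresses $\det[(\partial I - A)^{j,i}]$, up to an explicit sign, as a sum of $\pi_F$ over the spanning incoming forests of $\widetilde{G}^*_i$ in which one connected component contains both $j$ and $i$ (equivalently, in which $j$ lies on the in-path toward the root $i$). Reading off the coefficient of $\partial^k$ and matching the resulting sign against the factor $(-1)^{i+j}$ already present in Proposition~\ref{prop:in/out} gives $d_k = \sum_{F \in \mathcal{F}^{j,i}_{n-k-1}(\widetilde{G}^*_i)}\pi_F$. (When $i = j$ this degenerates to a principal minor of $\widetilde{G}^*_i$ and the component condition is vacuous, which is a useful consistency check.)

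The main obstacle I anticipate is sign bookkeeping, concentrated almost entirely in the $d_k$ computation: the all-minors matrix--tree theorem carries a sign depending on the positions $i,j$ and on the permutation pairing deleted rows with deleted columns, and one must verify that this sign is cancelled exactly by the $(-1)^{i+j}$ in the input--output equation, so that every $\pi_F$ appears with coefficient $+1$. A secondary point needing care is confirming that the ``component containing both $j$ and $i$'' condition is the correct translation of the row-$j$/column-$i$ deletion, with the path in the right orientation; verifying this on the smallest cases with $i \neq j$ and $\mathrm{dist}(j,i) > 0$ would settle it. By contrast, the $c_k$ part is a clean application of the principal-minor matrix--tree theorem once the ground-vertex device is in place, requiring only the identity $(-1)^{|S|}\det(A_S) = \det((-A)_S)$ to absorb the characteristic-polynomial sign.
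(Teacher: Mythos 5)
This lemma is not proved in the paper at all: it is quoted from Bortner {\em et al.}~\cite[Theorem~3.1]{BGMSS}, so there is no in-paper argument to compare yours against. Judged on its own, your plan follows what is essentially the standard route (and, in substance, the route of the cited source): ground the leaks at a new vertex $0$, recognize $-A$ as the reduced Laplacian of $\widetilde{G}$, and invoke matrix--tree-type theorems. Your treatment of the $c_k$ is correct and essentially complete: the coefficient of $\partial^k$ in $\det(\partial I - A)$ is $\sum_{|S|=n-k}\det\bigl((-A)_S\bigr)$, each principal minor $\det\bigl((-A)_S\bigr)$ is, by the directed matrix--tree theorem, the sum of $\pi_F$ over spanning incoming forests rooted at $\{0\}\cup\bigl([n]\smallsetminus S\bigr)$, and since vertex $0$ never has an outgoing edge, summing over $S$ enumerates each $(n-k)$-edge forest of $\widetilde{G}$ exactly once.

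For the $d_k$, your outline is sound but the two points you flag are exactly where the remaining work lies, and one of them conceals a subtlety in the statement as transcribed here. First, the Chaiken translation should be carried out, not just cited: after expanding in $\partial$, the relevant minors of the grounded Laplacian delete rows $T\cup\{0,j\}$ and columns $T\cup\{0,i\}$; Chaiken's theorem yields forests rooted at $T\cup\{0,i\}$ in which each component contains exactly one vertex of $T\cup\{0,j\}$, and a short argument (the $T$-roots and $0$ account for themselves, forcing $j$ into the component rooted at $i$) shows this is precisely the condition that some component contains both $i$ and $j$; since $i$ is a root in all such forests, they coincide with forests of $\widetilde{G}^*_i$. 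Second, on the signs: when you do the bookkeeping you will find that it is the product $(-1)^{i+j}\det\bigl[(\partial I - A)^{j,i}\bigr]$ --- that is, the \emph{entire} right-hand side of~\eqref{eq:in-out} --- whose $\partial^k$-coefficients equal the positive forest sums. That is not literally what the lemma as written asserts: equation~\eqref{eq:eq:i-o-c-and-d} factors $(-1)^{i+j}$ out a second time, so the $d_k$ it defines would be $(-1)^{i+j}$ times the forest sum. The paper's own running example exposes this: for the model of Figure~\ref{fig:cycle-running-example} (with $i=2$, $j=1$), the right-hand side of the input-output equation is $k_{21}u_1^{(1)}+(k_{03}+k_{13})k_{21}u_1$, whose coefficients are exactly the forest sums, whereas $\det\bigl[(\partial I - A)^{1,2}\bigr] = -k_{21}\bigl(\partial + k_{03}+k_{13}\bigr)$ carries the opposite sign. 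So your instinct that every $\pi_F$ must appear with coefficient $+1$ after cancellation against $(-1)^{i+j}$ is the correct content of the cited theorem; completing your sign-chase proves that (corrected) statement, and the discrepancy lies in the transcription of the lemma, not in your approach.
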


\begin{remark} \label{rem:error-in-formula}
In Lemma~\ref{lem:coeff-formula}, the input-output equation~\eqref{eq:eq:i-o-c-and-d} differs from the one in~\cite[Theorem~3.1]{BGMSS}, which erroneously has a factor of $
             (-1)^{i+j} $ on the right-hand side.
\end{remark}

In the next section, we use Lemma~\ref{lem:coeff-formula} to prove Theorem~\ref{thm:catenary-coefficient-map}.  
We illustrate the idea behind this proof through the following example.

\begin{example}[Example~\ref{ex:catenary-illustrate-theorem}, continued]
\label{ex:catenary-illustrate-theorem-2}
We revisit the $4$-compartment catenary model from 
    Figure~\ref{fig: bidirectedcatex}, in which 
    $\Leak = \{2,4\}$.  By Lemma~\ref{lem:coeff-formula}, the coefficient $c_1$ (which is the coefficient $\mathsf{a}_3$ examined in Example~\ref{ex:catenary-illustrate-theorem}) is the sum over 
    certain spanning subgraphs of the following graph $\widetilde{G}$, namely, the $3$-edge, incoming forests:

\begin{center}
    \begin{tikzpicture}
        \tikzset{node basic/.style={draw, ultra thick, text=black, minimum size=2em}}
        \tikzset{node circle/.style={node basic, circle}}
        \tikzset{line basic/.style={very thick, ->}}
        
        \node[node circle] (0) at (4, 1.75) {\(0\)};
        \node[node circle] (1) at (0, 0) {\(1\)};
        \node[node circle] (2) at (2, 0) {\(2\)};
        \node[node circle] (3) at (4, 0) {\(3\)};
        \node[node circle] (4) at (6, 0) {\(4\)};

        \draw[line basic] (1.north east)--(2.north west) node[midway, above] {\(k_{21}\)};
        \draw[line basic] (2.south west)--(1.south east) node[midway, below] {\(k_{12}\)};
        \draw[line basic] (2.north east)--(3.north west) node[midway, above] {\(k_{32}\)};
        \draw[line basic] (3.south west)--(2.south east) node[midway, below] {\(k_{23}\)};
        \draw[line basic] (3.north east)--(4.north west) node[midway, above] {\(k_{43}\)};
        \draw[line basic] (4.south west)--(3.south east) node[midway, below] {\(k_{34}\)};
        \draw[line basic] (2.north)--(0.south west) node[midway, above] {\(k_{02}\)};
        \draw[line basic] (4.north)--(0.south east) node[midway, above] {\(k_{04}\)};
        
\end{tikzpicture}
\end{center}
In turn, this sum can be computed by taking the corresponding sum over all 
$3$-edge, spanning subgraphs of $\widetilde{G}$
in which each node has at least one outgoing edge
(regardless of whether they are forests or not), and then subtracting the corresponding sum over such subgraphs that are \uline{not}
forests.  This description of $c_1$ as a difference of sums can be seen to exactly match the computation (of $\mathsf{a}_3$) shown earlier in~\eqref{eq:coeff-3}.    
\end{example}

\subsection{Proof of Theorem~\ref{thm:catenary-coefficient-map}} \label{sec:catenary-formula-proof}

\begin{proof}[Proof of Theorem~\ref{thm:catenary-coefficient-map}]
   Our plan is to apply Lemma~\ref{lem:coeff-formula}.
   To this end, following the notation in Lemma~\ref{lem:coeff-formula},
   let $\widetilde{G}$ denote the graph obtained by adding ``leak edges'' $\ell \to 0$ (for all $\ell \in \Leak$) to the $n$-vertex bidirected-path graph.  (Such a graph $\widetilde{G}$ appeared in Example~\ref{ex:catenary-illustrate-theorem-2}.)  Our next aim is to interpret summands in the desired formula~\eqref{eq:coefficient-catenary-LHS} in terms of subgraphs of $\widetilde{G}$, as follows:

\noindent
   {\bf Claim:} The expressions in~\eqref{eq:coefficient-catenary-LHS} have the following interpretations:
    \begin{enumerate}[label=(\roman*)]
        \item 
        \begin{align*}
        e_i \left( \mathrm{Out}_{\mathcal{M}}([n]) \right)     
        ~=~
        \sum_{F \mathrm{~is~an~}
        i\mathrm{-edge~spanning,~incoming~subgraph~of~} \widetilde{G}
        } \pi_F
        \end{align*} 
        \item 
        {\footnotesize
        \begin{align*}
            \sum_{ \{ I \in \Gamma_n ~:~ 2\lvert I \rvert \leq i \} } \kappa_I 
            ~
            e_{i-2\lvert I \rvert }(\mathrm{Out}_{\mathcal{M}}([n] \smallsetminus I^+))
                ~=~
        \sum_{\substack{F \mathrm{~is~an~}
        i\mathrm{-edge~spanning,~incoming~subgraph~of~} \widetilde{G} \\
       \mathrm{~that~is~not~a~forest}}
        } \pi_F~.
        \end{align*}
        }
    \end{enumerate}
We verify this claim, as follows.  An $i$-edge spanning, incoming subgraph of any graph $H$ is obtained by choosing $i$ vertices of $H$ and then (if possible) choosing one outgoing edge from each of those vertices.  This procedure, applied to the graph $\widetilde{G}$, is exactly captured by the expression on the left-hand side of part~(i) of the claim, and so completes the proof of that part of the claim.  

As for part~(ii), it is straightforward to check that the spanning, incoming subgraphs of $\widetilde{G}$ that are {\em not} forests are exactly those with at least one bidirected edge $j \leftrightarrows j+1$, where $1 \leq j \leq n-1$.  Also, ``adjacent'' bidirected edges $j \leftrightarrows j+1  \leftrightarrows j+2$ are forbidden (as incoming subgraphs prohibit two outgoing edges from $j+1$).  So, the possible collections of such edges, where we use $j$ to refer to $j \leftrightarrows j+1$, are indexed by the set $\Gamma_n$ in~\eqref{eq:sets-with-no-consec}.  From here it is simple to verify part~(ii).

Now the desired formula~\eqref{eq:coefficient-catenary-LHS} follows directly from the above claim and Lemma~\ref{lem:coeff-formula}, specifically, equation~\eqref{eq:bortner-formula-coeff}, where $\mathsf{a}_i=c_{n-i}$.

What remains is to analyze the coefficients $\widetilde{\mathsf{a}}_i$ in~\eqref{eq:coefficient-catenary-RHS}.  We again aim to apply Lemma~\ref{lem:coeff-formula}, so we let $\widetilde{G}^*_{out}$ denote the graph obtained from $\widetilde{G}$ by removing all outgoing edges from the output vertex (which we recall is labeled $out$).  As $out$ has no outgoing edges, the only way for the input and output to be in the same connected component of a subgraph of $\widetilde{G}^*_{out}$ is for the subgraph to contain the directed path $in \to in+1 \to \dots \to out$, which we denote by $P$.  This directed path $P$ explains the term $\kappa(in,out)$ appearing in the formula for $\widetilde{\mathsf{a}}_i$ in~\eqref{eq:coefficient-catenary-RHS}.  

Recall that $\mathcal{P}$ denotes the set of vertices along the directed path $P$.  In addition to the edges in~$P$, the remaining edges in a spanning, incoming subgraph of $\widetilde{G}^*_{out}$ must be outgoing edges of $[n] \smallsetminus \mathcal{P}$.  We must show that the choice of these remaining edges is captured by the expression inside the large parentheses in~\eqref{eq:coefficient-catenary-RHS}.  This expression is easily seen to be analogous to that in~\eqref{eq:coefficient-catenary-LHS} and the proof is analogous to the proof of the claim we gave above.  Now part~(ii) follows from 
Lemma~\ref{lem:coeff-formula}, specifically, equation~\eqref{eq:bortner-formula-coeff-RHS}, where $\widetilde{\mathsf{a}}_i=d_{n-i-1}$.
\end{proof}

\section{Discussion}\label{sec:Conj}
In this work, we completely classified generically locally
identifiable directed-cycle models (Theorem~\ref{thm:main-cycle}).  
Notably, as mentioned earlier, 
Theorem~\ref{thm:main-cycle} is the first result on the identifiability of a family of models with any number of inputs and outputs.
A natural question is whether our results can be generalized to other  models that contain cycles.  
Some partial results in this direction -- for models arising from directed-cycle models by adding bidirected-edges -- are easily obtained by combining Theorem~\ref{thm:main-cycle} with results of Bortner {\em et al.}~\cite[Theorems~4.3--4.4]{BGMSS}.  
However, in general this problem remains open.

Another open direction concerns the identifiability of catenary models.  A classification is known when there is only one input and one output (Proposition~\ref{prop:classification-identifiable-models}), but this problem is unresolved when there are multiple inputs or outputs.  It may be the case that a version of the leak-interlacing condition will play a role, and we expect that our formula for the coefficient map (Theorem~\ref{thm:catenary-coefficient-map}) will play a key role in future proofs in this direction. 

Theorem~\ref{thm:catenary-coefficient-map} also can help us investigate the singular locus of generically locally
identifiable) catenary models, and perhaps lead to a resolution of \cite[Conjecture 5.5]{singularlocus}. For directed-cycle models, a formula for the coefficient map enabled our results on their singular loci (Theorems~\ref{thm:singular-locus-cycle} and~\ref{thm:1-1-p-singular-locus}), but 
our understanding of the singular loci of these models is still quite incomplete (Conjecture~\ref{conj:s-locus}).

Finally, we return to the Question~({\bf Q1})
posed in the Introduction: in bidirected-tree models that are unidentifiable, which individual parameters are identifiable?  It is our hope that some answers to this question for catenary models will be forthcoming from our formula for the coefficient map (Theorem~\ref{thm:catenary-coefficient-map}).  We also could pose the analogous question for directed-cycle models, and we expect that an answer again will involve the leak-interlacing condition.

For readers interested in exploring the open problems posed above, we recommend the 
 {\tt Maple} software {\tt SIAN} (Structural Identifiability ANalyser)~\cite{SIAN}, which is available as a web-based MapleCloud application and is based on results of 
Hong, Ovchinnikov, Pogudin, and Yap~\cite{HOPY}. Another option is a {\tt Julia} package, {\tt StructuralIdentifiability.jl}, that implements theory of 
Dong, Goodbrake, Harrington, and Pogudin~\cite{julia-software}.
We also make note of an investigation of models coming from applications~\cite{sling}, and 
databases of models with three compartments~\cite{norton1982investigation} and up to four compartments~\cite{gogishvili-database}, together with their identifiability properties.  In a similar vein, 
a database containing small directed-cycle and catenary models can be found in Appendix~\ref{appendix:cycle}--\ref{appendix:cat}.  We anticipate that patterns identified through such explorations will generate conjectured answers 
to the questions posed above, 
and, in turn, inspire new theorems.

{\small
\subsection*{Acknowledgements}
Paul Dessauer, Alexis Edozie, Odalys Garcia-Lopez, Tanisha Grimsley, and Viridiana Neri initiated this
research in the 2022 MSRI-UP REU, 
which was hosted by the Mathematical Sciences Research Institue (MSRI, now SL-Math) and is supported by the NSF (DMS-2149642) and the Alfred P. Sloan Foundation. Saber Ahmed, Natasha Crepeau, Jordy Lopez Garcia, and Anne Shiu were mentors of MSRI-UP.  
AS and JLG were partially supported by the NSF (DMS-1752672).
The authors graciously acknowledge Jose Lopez for insights and discussions during the course of this project.  
The authors thank 
Federico Ardila
for helpful discussions and 
for his leadership as on-site director of MSRI-UP.  
The authors are also grateful to 
MSRI for the wonderful experience, and Scribble Together for proving tools for remote collaboration.
AS thanks Dean Baskin for discussions  
that contributed to the proof of Lemma~\ref{lem:alg-indep}(2).
The authors thank Alexey Ovchinnikov and Gleb Pogudin for detailed suggestions on an earlier draft,  and acknowledge two reviewers for helpful comments which improved this work.
}

\bibliographystyle{plain}
\bibliography{references.bib}

\begin{thebibliography}{10}

\bibitem{sling}
Xabier~Rey Barreiro and Alejandro~F. Villaverde.
\newblock On the origins and rarity of locally but not globally identifiable
  parameters in biological modeling.
\newblock {\em IEEE Access}, 11:65457--65467, 2023.

\bibitem{BGMSS}
Cashous Bortner, Elizabeth Gross, Nicolette Meshkat, Anne Shiu, and Seth
  Sullivant.
\newblock Identifiability of linear compartmental tree models and a general
  formula for input-output equations.
\newblock {\em Adv. in Appl. Math.}, 146:102490, 2023.

\bibitem{BM-2022}
Cashous Bortner and Nicolette Meshkat.
\newblock Identifiable paths and cycles in linear compartmental models.
\newblock {\em Bull. Math. Biol.}, 84(5):Paper No. 53, 43, 2022.

\bibitem{BM-indisting}
Cashous Bortner and Nicolette Meshkat.
\newblock Graph-based sufficient conditions for indistinguishability of linear
  compartmental models.
\newblock {\em SIAM J.\ Appl.\ Dyn.\ Syst.}, 23(3):2179--2207, 2024.

\bibitem{CJSSS}
Patrick Chan, Katherine Johnston, Anne Shiu, Aleksandra Sobieska, and Clare
  Spinner.
\newblock Identifiability of linear compartmental models: The impact of
  removing leaks and edges.
\newblock {\em Preprint, {\tt arXiv:2102.04417}}, 2021.

\bibitem{chau1985linear}
Nguyen~Phong Chau.
\newblock Linear n-compartment catenary models: Formulas to describe tracer
  amount in any compartment and identification of parameters from a
  concentration-time curve.
\newblock {\em Math.\ Biosci.}, 76(2):185--206, 1985.

\bibitem{delbary2016compartmental}
Fabrice Delbary, Sara Garbarino, and Valentina Vivaldi.
\newblock Compartmental analysis of dynamic nuclear medicine data: models and
  identifiability.
\newblock {\em Inverse Probl.}, 32(12):125010, 2016.

\bibitem{julia-software}
Ruiwen Dong, Christian Goodbrake, Heather~A. Harrington, and Gleb Pogudin.
\newblock Differential elimination for dynamical models via projections with
  applications to structural identifiability.
\newblock {\em SIAM Journal on Applied Algebra and Geometry}, 7(1):194--235,
  2023.

\bibitem{ER}
Richard Ehrenborg and Gian-Carlo Rota.
\newblock Apolarity and canonical forms for homogeneous polynomials.
\newblock {\em Eur. J. Combin.}, 14(3):157--181, 1993.

\bibitem{eisenberg2017confidence}
Marisa~C Eisenberg and Harsh~V Jain.
\newblock A confidence building exercise in data and identifiability:
  {M}odeling cancer chemotherapy as a case study.
\newblock {\em J.\ Theor.\ Biol.}, 431:63--78, 2017.

\bibitem{github-repo}
Odalys Garcia-Lopez.
\newblock Identifiability of directed cycle and catenary linear compartmental
  models github repo.
\newblock
  \href{https://github.com/odalys-garcia/Identifiability-of-Directed-Cycle-and-Catenary-Linear-Compartmental-Models}{\tt
  \color{black}
  https://github.com/odalys-garcia/Identifiability-of-Directed-Cycle-and-Catenary-Linear-Compartmental-\newline
  Models}.

\bibitem{GOS}
Seth Gerberding, Nida Obatake, and Anne Shiu.
\newblock Identifiability of linear compartmental models: the effect of moving
  inputs, outputs, and leaks.
\newblock {\em Linear Multilinear A.}, 70(14):2782--2803, 2020.

\bibitem{gogishvili-database}
Natali Gogishvili.
\newblock Database for identifiability properties of linear compartmental
  models.
\newblock {\em Preprint, {\tt arXiv:2406.16132}}, 2024.

\bibitem{GHMS}
Elizabeth Gross, Heather Harrington, Nicolette Meshkat, and Anne Shiu.
\newblock Linear compartmental models: input-output equations and operations
  that preserve identifiability.
\newblock {\em SIAM J. Appl. Math.}, 79(4):1423--1447, 2019.

\bibitem{singularlocus}
Elizabeth Gross, Nicolette Meshkat, and Anne Shiu.
\newblock Identifiability of linear compartmental models: The singular locus.
\newblock {\em Adv.\ Appl.\ Math.}, 133(C), 2022.

\bibitem{HOPY}
Hoon Hong, Alexey Ovchinnikov, Gleb Pogudin, and Chee Yap.
\newblock Global identifiability of differential models.
\newblock {\em Comm. Pure Appl. Math.}, 73(9):1831 -- 1879, 2018.

\bibitem{SIAN}
Hoon Hong, Alexey Ovchinnikov, Gleb Pogudin, and Chee Yap.
\newblock {SIAN}: software for structural identifiability analysis of ode
  models.
\newblock {\em Bioinformatics}, 35(16):2873--2874, 2019.

\bibitem{symmetric_fns}
I.~G. Macdonald.
\newblock {\em Symmetric Functions and Hall Polynomials}.
\newblock Oxford University Press, 1995.

\bibitem{MSE}
Nicolette Meshkat, Seth Sullivant, and Marisa Eisenberg.
\newblock Identifiability results for several classes of linear compartment
  models.
\newblock {\em B. Math. Biol.}, 77(8):1620--1651, 2015.

\bibitem{norton1982investigation}
JP~Norton.
\newblock An investigation of the sources of nonuniqueness in deterministic
  identifiability.
\newblock {\em Math. Biosci.}, 60(1):89--108, 1982.

\bibitem{OPT}
Alexey Ovchinnikov, Gleb Pogudin, and Peter Thompson.
\newblock Input-output equations and identifiability of linear ode models.
\newblock {\em IEEE T.\ Automat.\ Contr.}, 68(2):812--824, 2023.

\bibitem{scussolini2017physiology}
Mara Scussolini, Sara Garbarino, Gianmario Sambuceti, Giacomo Caviglia, and
  Michele Piana.
\newblock A physiology-based parametric imaging method for {FDG--PET} data.
\newblock {\em Inverse Probl.}, 33(12):125010, 2017.

\bibitem{vicini2000identifiability}
Paolo Vicini, Hsiao-Te Su, and Joseph~J Distefano~III.
\newblock Identifiability and interval identifiability of mammillary and
  catenary compartmental models with some known rate constants.
\newblock {\em Math.\ Biosci.}, 167(2):145--161, 2000.

\end{thebibliography}

\addresseshere

\newpage

\begin{appendix}
\section{Directed-cycle models with two leaks} \label{appendix:cycle}
The next two pages form a database of directed-cycle models with up to three compartments, one input, one output, and two leaks.  
This information was obtained using the software {\tt SIAN} (Structural Identifiability ANalyser)~\cite{SIAN}.  For each model, the input is assumed to be in compartment-$1$, and the following information is provided:
    \begin{itemize}
        \item Number of vertices (compartments)
        \item Location of the output and locations of the two leaks
        \item Number of inputs
        \item Number of outputs
        \item Whether or not the model is (generically locally) identifiable (the rows corresponding to identifiable models are highlighted)
        \item Globally identifiable parameters and initial conditions
        \item Locally identifiable parameters and initial conditions
        \item Non-identifiable parameters and initial conditions
    \end{itemize}
For a definitions pertaining to the identifiability of parameters and initial conditions (as opposed to identifiability of models), we refer the reader to 
\cite[Definition 2.5]{HOPY} or~\cite{sling}.  A model is (generically locally) identifiable if and only if none of its parameters is non-identifiable.

\includepdf[pages=-,angle=90]{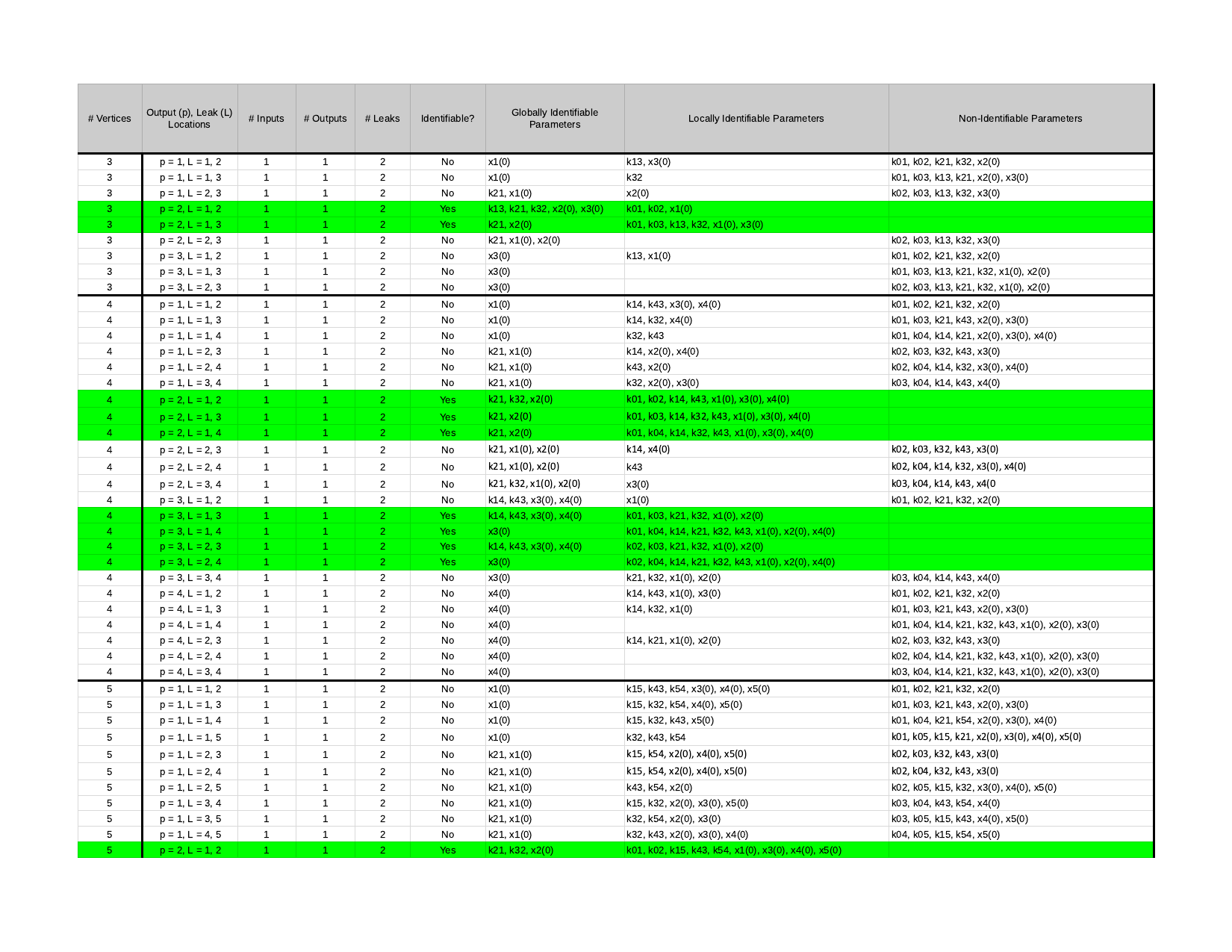}

\section{Catenary models with one input and one output} \label{appendix:cat}

\subsection{Identifiable models}
Figure~\ref{fig:catenary-database} shows a database of (generically locally) identifiable catenary models with three compartments, one input, one output, and up to one leak.  For each model, the following information is provided:
    \begin{itemize}
        \item The model, which is specified by three vectors of the form $(a,b,c)$, one for each the three compartments, with $a \in \{1,0\}$ (respectively, $b$ or $c$) indicating whether the compartment is an input (respectively, output or leak) or not
        \item Number of leaks
        \item A diagram of the model, in which green indicates globally identifiable edges/leaks and blue indicates locally identifiable
    \end{itemize} 

\begin{figure}[htb]
\begin{center}
\includegraphics[scale=0.6,trim={0.5in 0.5in 0 0.5in},clip]{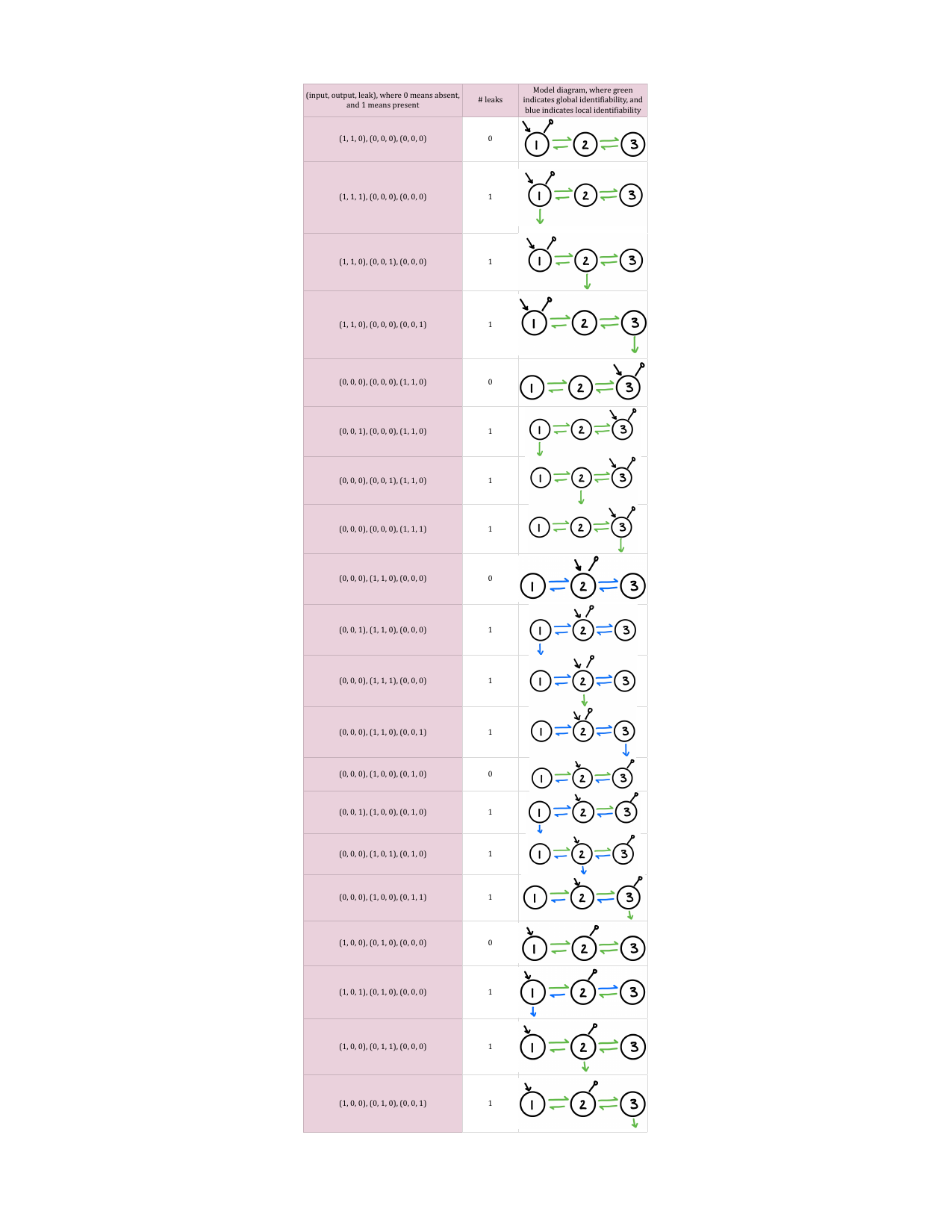} 
\caption{Identifiable catenary models with three compartments, one input, one output, and zero or one leaks. {\tt SIAN} code for these models is available in a supplementary data file and also accessible via the \href{https://github.com/odalys-garcia/Identifiability-of-Directed-Cycle-and-Catenary-Linear-Compartmental-Models}{GitHub repository} \cite{github-repo}.} \label{fig:catenary-database}
\end{center}
\end{figure}

\subsection{All models}

Figures~\ref{fig:catenary-database-2}--\ref{fig:catenary-database-2-5} show a database of catenary models with three compartments, one input, and one output.  For each model, the following information is provided:

    \begin{itemize}
        \item Number of vertices (compartments)
        \item Number of inputs
        \item Number of outputs
        \item Number of leaks
        \item Whether or not the model is (generically locally) identifiable (green [respectively, blue or red] rows correspond to globally [respectively, locally or un-] identifiable models)
        \item Location of input
        \item Location of output
        \item Location of leaks
        \item Globally identifiable parameters
        \item Locally identifiable parameters
        \item Non-identifiable parameters
    \end{itemize} 

\newpage
\begin{figure}[h]
\begin{center}
\includegraphics[scale=0.8, trim={0.5in 0.5in 0 0.5in},clip]{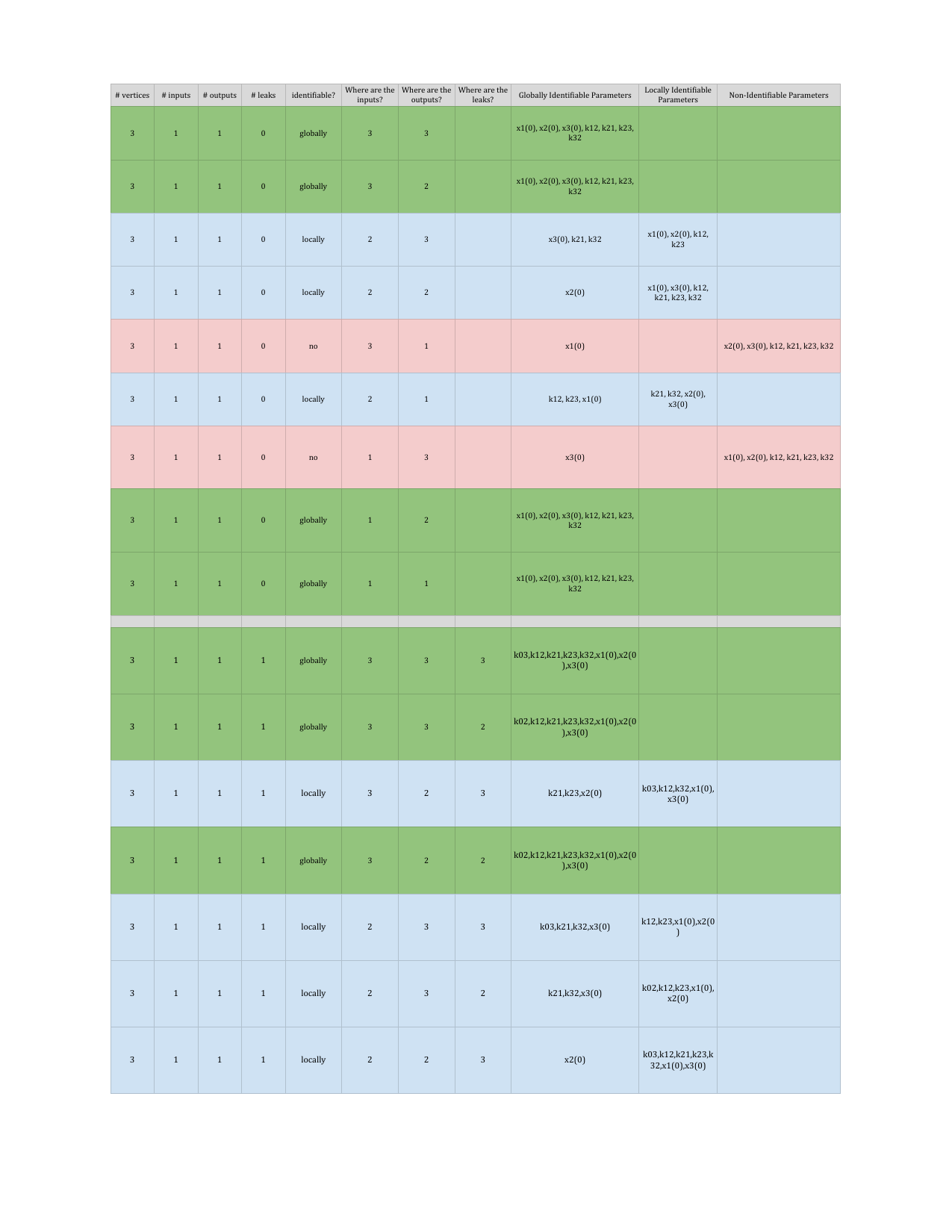}
\caption{Catenary models with three compartments, one input, and one output.
{{\tt SIAN} code for these models is available in a supplementary data file.}
} \label{fig:catenary-database-2}
\end{center}
\end{figure}

\newpage
\begin{figure}[h]
\begin{center}
\includegraphics[page=2, scale=0.8, trim={0.5in 0.5in 0 0.5in},clip]{catenary1in1out.pdf}
\caption{Catenary models with three compartments, one input, and one output (page 2).} \label{fig:catenary-database-2-2}
\end{center}
\end{figure}

\newpage
\begin{figure}[h]
\begin{center}
\includegraphics[page=3, scale=0.8, trim={0.5in 0.5in 0 0.5in},clip]{catenary1in1out.pdf}
\caption{Catenary models with three compartments, one input, and one output (page 3).} \label{fig:catenary-database-2-3}
\end{center}
\end{figure}

\newpage
\begin{figure}[h]
\begin{center}
\includegraphics[page=4, scale=0.8, trim={0.5in 0.5in 0 0.5in},clip]{catenary1in1out.pdf}
\caption{Catenary models with three compartments, one input, and one output (page 4).} \label{fig:catenary-database-2-4}
\end{center}
\end{figure}

\newpage
\begin{figure}[h]
\begin{center}
\includegraphics[page=5, scale=0.8, trim={0.5in 0.5in 0 0.5in},clip]{catenary1in1out.pdf}
\caption{Catenary models with three compartments, one input, and one output (page 5).} \label{fig:catenary-database-2-5}
\end{center}
\end{figure}
\end{appendix}
         
\end{document}